\newcommand{\x}{\mathbf{x}}
\newcommand{\y}{\mathbf{y}}
\newcommand{\z}{\mathbf{z}}
\newcommand{\w}{\mathbf{w}}
\newcommand{\s}{\mathbf{s}}
\newcommand{\1}{\mathbf{1}}
\newcommand{\0}{0}
\renewcommand{\v}{\mathbf{v}}
\renewcommand{\b}{b}
\newcommand{\A}{A}
\newcommand{\M}{M}
\newcommand{\W}{W}
\newcommand{\I}{I}
\newcommand{\U}{U}
\newcommand{\V}{V}
\newcommand{\R}{\mathbb{R}}
\newcommand{\N}{\mathcal{N}}
\newcommand{\trace}{\mbox{trace}}
\newcommand{\<}{\left\langle}
\renewcommand{\>}{\right\rangle}
\DeclareMathOperator*{\argmin}{argmin}
\newtheorem{assumption}{Assumption}
\newtheorem{lemma}{Lemma}
\newtheorem{theorem}{Theorem}
\newtheorem{proof}{Proof}
\newtheorem{corollary}{Corollary}
\begin{document}
%
\title{Decentralized Accelerated Gradient Methods With Increasing Penalty Parameters}
%
%
%

\author{Huan~Li,~\IEEEmembership{Member, IEEE,}
        Cong~Fang,
        Wotao~Yin,
        Zhouchen~Lin,~\IEEEmembership{Fellow, IEEE,}
\thanks{H. Li is sponsored by Zhejiang Lab (grant no. 2019KB0AB02). Z. Lin is supported by NSF China (grant no.s 61625301 and 61731018), Major Research Project of Zhejiang Lab (grant no.s 2019KB0AC01 and 2019KB0AB02) and Beijing Academy of Artificial Intelligence.}
\thanks{H. Li is with the Institute of Robotics and Automatic Information Systems, College of Artificial Intelligence, Nankai University, Tianjin, China (lihuan\_ss@126.com), and the the College of Computer Science and Technology, Nanjing University of Aeronautics and Astronautics, Nanjing, China. This work was done when Li was a Ph.D student at the Key Lab. of Machine Perception (MOE), School of EECS, Peking University, Beijing, China.}
\thanks{C. Fang is with the Department of Electrical Engineering, Princeton University, Princeton, New Jersey, USA (fangcong@pku.edu.cn).}
\thanks{W. Yin is with the Department of Mathematics, University of California, Los Angeles, USA (wotaoyin@math.ucla.edu).}
\thanks{Z. Lin is with the Key Lab. of Machine Perception (MOE), School of EECS, Peking University, Beijing, China (zlin@pku.edu.cn). Z. Lin is the corresponding author.}
\thanks{This paper has supplementary downloadable material available at http://ieeexplore.ieee.org., provided by the author. The material includes some additional experimental results. This material is 2M in size.}
}
\maketitle

\begin{abstract}
  In this paper, we study the communication and (sub)gradient computation costs in distributed optimization. We present two algorithms based on the framework of the accelerated penalty method with increasing penalty parameters. Our first algorithm is for smooth distributed optimization and it obtains the near optimal $O(\sqrt{\frac{L}{\epsilon(1-\sigma_2(\W))}}\log\frac{1}{\epsilon})$ communication complexity and the optimal  $O(\sqrt{\frac{L}{\epsilon}})$ gradient computation complexity for $L$-smooth convex problems, where $\sigma_2(\W)$ denotes the second largest singular value of the weight matrix $\W$ associated to the network and $\epsilon$ is the target accuracy. When the problem is $\mu$-strongly convex and $L$-smooth, our algorithm has the near optimal $O(\sqrt{\frac{L}{\mu(1-\sigma_2(\W))}}\log^2\frac{1}{\epsilon})$ complexity for communications and the optimal $O(\sqrt{\frac{L}{\mu}}\log\frac{1}{\epsilon})$ complexity for gradient computations. Our communication complexities are only worse by a factor of $(\log\frac{1}{\epsilon})$ than the lower bounds. 
  Our second algorithm is designed for nonsmooth distributed optimization and it achieves both the optimal $O(\frac{1}{\epsilon\sqrt{1-\sigma_2(\W)}})$ communication complexity and $O(\frac{1}{\epsilon^2})$ subgradient computation complexity, which match the lower bounds for nonsmooth distributed optimization.
\end{abstract}

\begin{IEEEkeywords}
Distributed accelerated gradient algorithms, accelerated penalty method, optimal (sub)gradient computation complexity, near optimal communication complexity.
\end{IEEEkeywords}

%
\IEEEpeerreviewmaketitle

\section{Introduction}
In this paper, we consider the following distributed convex optimization problem:
\begin{eqnarray}
\min_{x\in\R^n}\frac{1}{m}\sum_{i=1}^m F_i(x)\equiv f_i(x)+h_i(x),\label{problem}
\end{eqnarray}
where $m$ agents form a connected and undirected network $\mathcal{G}=(\mathcal{V},\mathcal{E})$, $\mathcal{V}=\{1,2,...,m\}$ is the set of agents and $\mathcal{E}\subset\mathcal{V}\times\mathcal{V}$ is the set of edges, $F_i$ is the local objective function only available to agent $i$ and $x$ is the decision variable. $f_i$ is a convex and smooth function while $h_i$ is a convex but possibly nonsmooth one. We consider distributed algorithms using only local computations and communications, i.e., each agent $i$ makes its decision only based on the local computations on $F_i$ (i.e., the gradient of $f_i$ and the subgradient of $h_i$) and the local information received from its neighbors in the network. A pair of agents can exchange information if and only if they are directly connected in the network. Distributed computation has been widely used in signal processing \cite{Bazeruqe10}, automatic control \cite{sam09infcoom,ren06} and machine learning \cite{Dekel2012,Forero2012,Agarwal2011}.

\subsection{Literature Review}
\begin{table*}
\begin{center}
\scriptsize
\begin{tabular}{c|c|c}
\hline\hline
\multicolumn{3}{c}{Non-strongly convex and smooth case}\\
\hline
 Methods & Complexity of gradient computations & Complexity of communications\\
\hline
DNGD\footnotemark[1] &$O\left(\frac{1}{\epsilon^{5/7}}\right)$ \cite{qu2017-2} &$O\left(\frac{1}{\epsilon^{5/7}}\right)$ \cite{qu2017-2}\\
DN-C  & $O\left({\sqrt{\frac{L}\epsilon}}\right)$ \cite{Jakovetic-2014} & $O\left(\sqrt{\frac{L}{\epsilon}}\frac{1}{1-\sigma_2(\W)}\log\frac{1}{\epsilon}\right)$ \cite{Jakovetic-2014}\\
Accelerated Dual Ascent& $O\left(\frac{L}{\epsilon\sqrt{1-\sigma_2(\W)}}\log^2\frac{1}{\epsilon}\right)$ \cite{Uribe-2017} & $O\left(\sqrt{\frac{L}{\epsilon(1-\sigma_2(\W))}}\log\frac{1}{\epsilon}\right)$ \cite{Uribe-2017}\\
Our APM-C & $O\left({\sqrt{\frac{L}\epsilon}}\right)$ & $O\left(\sqrt{\frac{L}{\epsilon(1-\sigma_2(\W))}}\log\frac{1}{\epsilon}\right)$\\
\hline
Lower Bound & $O\left({\sqrt{\frac{L}\epsilon}}\right)$ \cite{Nesterov-2004} & $O\left(\sqrt{\frac{L}{\epsilon(1-\sigma_2(\W))}}\right)$\cite{scaman-2019}\\
\hline\hline
\multicolumn{3}{c}{Strongly convex and smooth case}\\
\hline
 Methods & Complexity of gradient computations & Complexity of communications\\
\hline
DNGD &$O\left(\big(\frac{L}{\mu}\big)^{5/7}\frac{1}{(1-\sigma_2(\W))^{1.5}}\log\frac{1}{\epsilon}\right)$ \cite{qu2017-2} & $O\left(\big(\frac{L}{\mu}\big)^{5/7}\frac{1}{(1-\sigma_2(\W))^{1.5}}\log\frac{1}{\epsilon}\right)$ \cite{qu2017-2} \\
Accelerated Dual Ascent& $O\left(\frac{L}{\mu\sqrt{1-\sigma_2(\W)}}\log^2\frac{1}{\epsilon}\right)$\hspace*{0.08cm}\footnotemark[3] \cite{Uribe-2017}& $O\left(\sqrt{\frac{L}{\mu(1-\sigma_2(\W))}}\log\frac{1}{\epsilon}\right)$ \cite{dasent,Uribe-2017}\\
Our APM-C & $O\left(\sqrt{\frac{L}{\mu}}\log\frac{1}{\epsilon}\right)$ & $O\left(\sqrt{\frac{L}{\mu(1-\sigma_2(\W))}}\log^2\frac{1}{\epsilon}\right)$\\
\hline
Lower Bound & $O\left(\sqrt{\frac{L}{\mu}}\log\frac{1}{\epsilon}\right)$ \cite{Nesterov-2004}& $O\left(\sqrt{\frac{L}{\mu(1-\sigma_2(\W))}}\log\frac{1}{\epsilon}\right)$ \cite{dasent}\\
\hline\hline
\multicolumn{3}{c}{Convex and Nonsmooth case}\\
\hline
 Methods & Complexity of subgradient computations & Complexity of communications\\
\hline
Primal-dual method     &  $O\left(\frac{1}{\epsilon^2}\right)$ \cite{scaman-2019}     &    $O\left(\frac{1}{\epsilon\sqrt{1-\sigma_2(\W)}}\right)$ \cite{scaman-2019}    \\
Smoothed accelerated gradient sliding method     &  $O\left(\frac{1}{\epsilon^2}\right)$ \cite{Uribe-2017}     &    $O\left(\frac{1}{\epsilon\sqrt{1-\sigma_2(\W)}}\right)$ \cite{Uribe-2017}    \\
Our APM                     &  $O\left(\frac{1}{\epsilon^2}\right)$      &    $O\left(\frac{1}{\epsilon\sqrt{1-\sigma_2(\W)}}\right)$     \\
\hline
Lower Bound & $O\left(\frac{1}{\epsilon^2}\right)$ \cite{scaman-2018} & $O\left(\frac{1}{\epsilon\sqrt{1-\sigma_2(\W)}}\right)$ \cite{scaman-2018}\\
\hline\hline
\end{tabular}
\end{center}
\caption{Complexity comparisons between accelerated dual ascent, DN-C, DNGD, the primal-dual method and our methods (APM-C,APM) for distributed convex problems.}\label{table1}
\end{table*}

Among the classical distributed first-order algorithms, two different types of methods have been proposed, namely, the primal-only methods and the dual-based methods.

The distributed subgradient method is a representative primal-only distributed optimization algorithm over general networks \cite{Nedic-2009}, while its stochastic version was studied in \cite{nedic2011asynchronous}, and asynchronous variant in \cite{Ram-2010}. In the distributed subgradient method, each agent performs a consensus step and then follows a subgradient descent with a diminishing step-size. To avoid the diminishing step-size, three different types of methods have been proposed. The first type of methods \cite{aug-dgm,qu2017,shi2017,qu2017-2} rely on tracking differences of gradients, which keep a variable to estimate the average gradient and use this estimation in the gradient descent step. The second type of methods, called EXTRA \cite{Shi-2015,Shi-2015-2}, introduce two different weight matrices as opposed to a single one with the standard distributed gradient method \cite{Nedic-2009}. EXTRA also uses the gradient tracking. The third type of methods employ a multi-consensus inner loop \cite{Jakovetic-2014,Wei-2018} and thus improve the consensus of the variables at each outer iteration.

The dual-based methods introduce the Lagrangian function and work in the dual space. Many classical methods can be used to solve the dual problem, e.g., the dual subgradient ascent \cite{Terelius-2011}, dual gradient ascent \cite{yuhao-2018}, accelerated dual gradient ascent \cite{dasent,Uribe-2017}, the primal-dual method \cite{Lam-2017,scaman-2018}, and ADMM \cite{Erseghe-2011,Shi-2014,Wei-2013,Iutzeler-2016,makhdoumi-2017,Aybat2018}. In general, most dual-based methods require the evaluation of the Fenchel conjugate of the local objective function $f_i(x)$ and thus have a larger gradient computation cost per iteration than the primal-only algorithms for smooth distributed optimization. For nonsmooth problems, the authors of \cite{Lam-2017,scaman-2018,scaman-2019} studied the communication-efficient primal-dual method. Specifically, they use the classical primal-dual method \cite{Chambolle-2011} in the outer loop and the subgradient method in the inner loop. The authors of \cite{scaman-2018} used Chebyshev acceleration \cite{Arioli-2014} to further reduce the computation complexity while the authors of \cite{scaman-2019} did it via carefully setting the parameters.

Among the methods described above, the distributed Nesterov gradient with consensus iterations (D-NC) proposed in \cite{Jakovetic-2014} and the distributed Nesterov gradient descent (DNGD) proposed in \cite{qu2017-2} employ Nesterov's acceleration technique in the primal space, and the accelerated dual ascent proposed in \cite{dasent} use the standard accelerated gradient descent in the dual space. Moreover, D-NC attains the optimal gradient computation complexity for nonstrongly convex and smooth problems, and the accelerated dual ascent achieves the optimal communication complexity for strongly convex and smooth problems, which match the complexity lower bounds \cite{Nesterov-2004,dasent}. For nonsmooth problems, the primal-dual method proposed in \cite{scaman-2018,scaman-2019} and the smoothed accelerated gradient sliding method in \cite{Uribe-2017} achieve both the optimal communication and subgradient computation complexities, which also match the lower bounds \cite{scaman-2018}. We denote the communication and computation complexities as the numbers of communications and (sub)gradient computations to find an $\epsilon$-optimal solution $x$ such that $\frac{1}{m}\sum_{i=1}^mF_i(x)-\min_{x}\frac{1}{m}\sum_{i=1}^mF_i(x)\leq \epsilon$, respectively.

\vspace*{-0.3cm}
\subsection{Contributions}

In this paper, we study the decentralized accelerated gradient methods with near optimal complexities from the perspective of the accelerated penalty method. Specifically, we propose an \emph{A}ccelerated \emph{P}enalty \emph{M}ethod with increasing penalties for smooth distributed optimization by employing a multi-\emph{C}onsensus inner loop (APM-C). The theoretical significance of our method is that we show the near optimal communication complexities and the optimal gradient computation complexities for both strongly convex and nonstrongly convex problems. Our communication complexities are only worse by a logarithm factor than the lower bounds.

Table \ref{table1} summarizes the complexity comparisons to the state-of-the-art distributed optimization algorithms (the notations in Table \ref{table1} will be specified precisely soon), namely, DNGD, D-NC, and the accelerated dual ascent reviewed above, as well as the complexity lower bounds. Our complexities match the lower bounds except that the communication ones have an extra factor of $\log\frac{1}{\epsilon}$. The communication complexity of the accelerated dual ascent matches ours for nonstrongly convex problems and is optimal for strongly convex problems (thus better than ours by $\log\frac{1}{\epsilon}$). On the other hand, our gradient computation complexities match the lower bounds and they are better than the compared methods. It should be noted that due to term $\log^2\frac{1}{\epsilon}$, our communication complexity for strongly convex problems is not a linear convergence rate.
\footnotetext[1]{The authors of \cite{qu2017-2} did not give the dependence on $1-\sigma_2(\W)$. It does not mean that their complexity has no dependence on $1-\sigma_2(\W)$.}

Our framework of accelerated penalty method with increasing penalties also applies to nonsmooth distributed optimization. It drops the multi-consensus inner loop but employs an inner loop with several runs of subgradient method. Both the optimal communication and subgradient computation complexities are achieved, which match the lower bounds for nonsmooth distributed optimization. Although the theoretical complexities are the same with the methods \cite{scaman-2019,Uribe-2017}, our method gives the users a new choice in practice.


\subsection{Notations and Assumptions}
Throughout the paper, the variable $x\in\R^n$ is the decision variable of the original problem (\ref{problem}). We denote $x_{(i)}\in\R^n$ to be the local estimate of the variable $x$ for agent $i$. To simplify the algorithm description in a compact form, we introduce the aggregate variable $\x$, aggregate objective function $f(\x)$ and aggregate gradient $\nabla f(\x)$ as
\begin{equation}\notag
\x=\hspace*{-0.15cm}\left(\hspace*{-0.15cm}
  \begin{array}{c}
    x_{(1)}^T\\
    \vdots\\
    x_{(m)}^T
  \end{array}
\hspace*{-0.15cm}\right),f(\x)=\hspace*{-0.15cm}\sum_{i=1}^m f_i(x_{(i)}),\nabla f(\x)=\hspace*{-0.15cm}\left(\hspace*{-0.2cm}
  \begin{array}{c}
    \nabla f_1(x_{(1)})^T\\
    \vdots\\
    \nabla f_m(x_{(m)})^T
  \end{array}
\hspace*{-0.2cm}\right),
\end{equation}
where $\x\in\R^{m\times n}$, whose value at iteration $k$ is denoted by $\x^k$. For the double loop algorithms, we denote $\x^{k,t}$ as its value at the $k$th outer iteration and $t$th inner iteration. Assume that the set of minimizers is non-empty. Denote $x^*$ as one minimizer of problem (\ref{problem}), and let $\x^*=\1(x^*)^T\in\R^{m\times n}$, where $\1=(1,1,\cdots,1)^T\in\R^m$ is the vector with all ones. Denote $\partial h_i(x)$ as the subdifferential of $h_i(x)$ at $x$, and specifically, $\hat\nabla h_i(x)\in \partial h_i(x)$ as its one subgradient. For $h_i$, we introduce its aggregate objective function $h(\x)$ and aggregate subgradient $\hat\nabla h(\x)$ as
\begin{equation}\notag
h(\x)=\sum_{i=1}^m h_i(x_{(i)})\quad\mbox{and}\quad \hat\nabla h(\x)=\left(
  \begin{array}{c}
    \hat\nabla h_1(x_{(1)})^T\\
    \vdots\\
    \hat\nabla h_m(x_{(m)})^T
  \end{array}
\right).
\end{equation}

We use $\|\cdot\|$ and $\|\cdot\|_1$ as the $l_2$ Euclidean norm and $l_1$ norm for a vector, respectively. For matrices $\x$ and $\y$, we denote $\|\x\|_F$ as the Frobenius norm, $\|\x\|_2$ as the spectral norm and $\<\x,\y\>=\trace(\x^T\y)$ as their inner product. Denote $\I\in\R^{m\times m}$ as the identity matrix and $\N_i$ as the neighborhood of agent $i$ in the network. Define
\begin{eqnarray}
&\alpha(\x)=\frac{1}{m}\sum_{i=1}^mx_{(i)}\label{define_alpha}
\end{eqnarray}
as the average across the rows of $\x$. Define two operators
\begin{eqnarray}
&\Pi=\I-\frac{1}{m}\1\1^T\quad \mbox{ and }\quad \U=\sqrt{\I-\W}\label{define_u}
\end{eqnarray}
to measure the consensus violation, where $\W$ is the weight matrix associated to the network, which describes the information exchange through the network. Especially, $\|\Pi\x\|_F$ directly measures the distance between $x_{(i)}$ and $\alpha(\x)$. We follow \cite{dasent} to define $\sqrt{\A}=\V\sqrt{\Lambda}\V^T$, given the eigenvalue decomposition $\A=\V\Lambda\V^T$ of the symmetric positive semidefinite matrix $\A$.

We make the following assumptions for each function $f_i(x)$.
\begin{assumption}\label{assumption_f}
\begin{enumerate}
\item $f_i(x)$ is $\mu$-strongly convex: $f_i(y)\geq f_i(x)+\<\nabla f_i(x),y-x\>+\frac{\mu}{2}\|y-x\|^2$. Especially, we allow $\mu$ to be zero through this paper, and in this case we say $f_i(x)$ is convex.
\item $f_i(x)$ is $L$-smooth: $f_i(y)\leq f_i(x)+\<\nabla f_i(x),y-x\>+\frac{L}{2}\|y-x\|^2$.
\end{enumerate}
\end{assumption}

In Assumption \ref{assumption_f}, $\mu$ and $L$ are the strong-convexity constant and smoothness constant, respectively. Assumption \ref{assumption_f} yields that the aggregate function $f(\x)$ is also $\mu$-strongly convex and $L$-smooth. For the nonsmooth function $h_i(x)$, we follow \cite{Lam-2017} to make the following assumptions.
\begin{assumption}\label{assumption_h}
\begin{enumerate}
\item $h_i(x)$ is convex.
\item $h_i(x)$ is $M$-Lipschitz continuous: $h_i(y)\leq h_i(x)+\<\hat\nabla h_i(x),y-x\>+M\|y-x\|$.
\end{enumerate}
\end{assumption}

We can simply verify that $h(\x)$ is $(\sqrt{m}M)$-Lipschitz continuous. For the weight matrix $\W$, we make the following assumptions.
\begin{assumption}\label{assumption_w}
\begin{enumerate}
\item $\W\in\R^{m\times m}$ is a symmetric matrix with $\W_{i,j}\neq 0$ if and only if agents $i$ and $j$ are neighbors or $i=j$. Otherwise, $\W_{i,j}=0$.
\item $\I\succeq\W\succeq \0$, and $\W\1=\1$.
\end{enumerate}
\end{assumption}

Examples satisfying Assumption \ref{assumption_w} can be found in \cite{Shi-2015}. We denote by $1=\sigma_1(\W)\geq\sigma_2(\W)\geq\cdots\geq\sigma_m(\W)$ the spectrum of $\W$. Note that for a connected and undirected network, we always have $\sigma_2(\W)<1$, and $\frac{1}{1-\sigma_2(\W)}$ is a good indication of the network connectivity.
For many commonly used networks, we can give order-accurate estimate on $\frac{1}{1-\sigma_2(\W)}$ \cite[Proposition 5]{nedic-2018}. For example, $\frac{1}{1-\sigma_2(\W)}=O(m\log m)$ for the geometric graph, and $\frac{1}{1-\sigma_2(\W)}=O(1)$ for the expander graph and Erd\H{o}s$-$R\'{e}nyi random graph. Moreover, for any connected and undirected graph, $\frac{1}{1-\sigma_2(\W)}=O(m^2)$ in the worst case \cite{nedic-2018}.

In this paper, we focus on the communication and (sub)gradient computation complexity development for the proposed algorithms. We define one communication to be the operation that all the agents exchange information with their neighbors once, i.e., $\sum_{j\in\N_i}\W_{ij}x_{(j)}$ for all $i=1,2,...,m$. One (sub)gradient computation is defined to be the (sub)gradient evaluations of all the agents once, i.e., $\nabla f_i(x_{(i)})$ ($\hat\nabla h_i(x_{(i)})$) for all $i$.

\section{Development of the Accelerated Penalty Method}\label{sec:algorithm}

\subsection{Accelerated Penalty Method for Smooth Distributed Optimization}\label{sec:dnc}
In this section, we consider the smooth distributed optimization, i.e., $h_i(x)=0$ in problem (\ref{problem}). From the definition of $\Pi$ in (\ref{define_u}), we know that $x_{(1)}=\cdots=x_{(m)}$ is equivalent to $\Pi\x=\0$. Thus, we can reformulate the smooth distributed problem as
\begin{eqnarray}
\begin{aligned}\label{problem1}
\min_{\x\in\R^{m\times n}}f(\x)\quad \mbox{s.t.}\quad\Pi\x=\0.
\end{aligned}
\end{eqnarray}
Problem (\ref{problem1}) is a standard linearly constrained convex problem, and many algorithms can be used to solve it, e.g., the primal-dual method \cite{Lam-2017,scaman-2018,hong-2017,jakovetic-2017} and dual ascent \cite{Terelius-2011,dasent,Uribe-2017}. In order to propose an accelerated distributed gradient method based on the gradient of $f(\x)$, rather than the evaluation of its Fenchel conjugate or proximal mapping, we follow \cite{Yuan-2016} to use the penalty method to solve problem (\ref{problem1}) in this paper. Specifically, the penalty method solves the following problem instead:
\begin{eqnarray}
\min_{\x\in\R^{m\times n}} f(\x)+\frac{\beta}{2}\|\Pi\x\|_F^2,\label{penalty2}
\end{eqnarray}
where $\beta$ is a large constant. However, one big issue of the penalty method is that problems (\ref{problem1}) and (\ref{penalty2}) are not equivalent for finite $\beta$. When solving problem (\ref{penalty2}), we can only obtain an approximate solution of (\ref{problem1}) with small $\|\Pi\x\|_F$, rather than $\|\Pi\x\|_F\rightarrow 0$, and the algorithm only converges to a neighborhood of the solution set of problem (\ref{problem}) \cite{Yuan-2016}. Moreover, to find an $\epsilon$-optimal solution of (\ref{problem1}), we need to pre-define a large $\beta$ of the order $\frac{1}{\epsilon}$ \cite{Yuan-2016}. Thus, the parameter setting depends on the precision $\epsilon$. When $\beta$ is fixed as a constant of the order $\frac{1}{\epsilon}$, we can only get the $\epsilon$-accurate solution after some fixed iterations described by $\epsilon$, and more iterations will not give a more accurate solution. Please see Section \ref{relation_apm} for more details. To solve the above two problems, we use the gradually increasing penalty parameters, i.e., at the $k$th iteration, we use $\beta=\frac{\beta_0}{\vartheta_k}$ with fixed $\beta_0$ and diminishing $\vartheta_k\rightarrow 0$. The increasing penalty strategy has two advantages: 1) The solution of (\ref{penalty2}) approximates that of (\ref{problem1}) infinitely when the iteration number $k$ is sufficiently large. 2) The parameter setting does not depend on the accuracy $\epsilon$. The algorithm can be run without defining the accuracy $\epsilon$ in advance. It can reach arbitrary accuracy if run for arbitrarily long time.

We use the classical accelerated proximal gradient method (APG) \cite{Beck-2009-APG} to minimize the penalized objective in (\ref{penalty2}), i.e., at the $k$th iteration, we first compute the gradient of $f(\x)$ at some extrapolated point, and then compute the proximal mapping of $\frac{\beta_0}{2\vartheta_k}\|\Pi\x\|_F^2$ at some $\z$, defined as
\begin{equation}
\argmin_{\x\in\R^{m\times n}} \frac{\beta_0}{2\vartheta_k}\|\Pi\x\|_F^2+\frac{L}{2}\|\x-\z\|_F^2.\label{proximal_def}
\end{equation}

Due to the special form of $\Pi$ defined in (\ref{define_u}), a simple calculation yields $\frac{L\vartheta_k\z+\beta_0\1\alpha(\z)^T}{L\vartheta_k+\beta_0}$ as the solution of (\ref{proximal_def}), where $\alpha(\x)$ is defined in (\ref{define_alpha}). However, in the distributed setting, we can only compute $\alpha(\z)$ approximately in finite communications. Thus, we use the inexact APG to minimize (\ref{penalty2}), i.e., we compute the proximal mapping inexactly. Specifically, the algorithm framework consists of the following steps:
\begin{subequations} \begin{align}
&\y^k=\x^k+\frac{L\theta_k-\mu}{L-\mu}\frac{1-\theta_{k-1}}{\theta_{k-1}}(\x^k-\x^{k-1})\label{apg1},\\
&\z^k=\y^k-\frac{1}{L}\nabla f(\y^k)\label{apg2},\\
&\x^{k+1}\approx\argmin_{\x\in\R^{m\times n}}\frac{\beta_0}{2\vartheta_k}\|\Pi\x\|_F^2+\frac{L}{2}\left\|\x-\z^k\right\|_F^2\label{apg3},
\end{align} \end{subequations}
where the sequences $\{\theta_k\}$ and $\{\vartheta_k\}$ and the precision in step (\ref{apg3}) will be specified in Theorems \ref{theorem_D_NC_sc} and \ref{theorem_D_NC_ns} latter. Now, we consider the subproblem in procedure (\ref{apg3}). As discussed above, we only need to approximate $\alpha(\z^k)$, which can be obtained by the classical average consensus \cite{xiao-2004} or the accelerated average consensus \cite{liu-2011}. We only consider the accelerated average consensus, which consists of the following iterations:
\begin{equation}
\z^{k,t+1}=(1+\eta)\W\z^{k,t}-\eta\z^{k,t-1},\label{aac}
\end{equation}
where we initialize at $\z^{k,0}=\z^{k,-1}=\z^k$. The advantage of using the special $\Pi$ in (\ref{problem1}) is that we only need to call the classic average consensus to solve the subproblem in (\ref{apg3}), which has been well studied in the literatures, including the extensions over directed network and time-varying network \cite{avecons2010}. In fact, in Lemma \ref{lemma1}, we only require $\|\z^{k,T_k}-\1\alpha(\z^k)\|_F^2$ to be within some precision for the method used in the inner loop. Any average consensus method over undirected graph, directed graph or time-varying graph can be used in the inner loop, as long as it has a linear convergence.

Combing (\ref{apg1})-(\ref{apg3}) and (\ref{aac}), we can give our method, which is presented in a distributed way in Algorithm \ref{D-NC}. We use notations $\x^{-1}$ and $\z^{k,-1}$ in Algorithm \ref{D-NC} only for the writing consistency when beginning the recursions from $k=0$ and $t=0$.

\begin{algorithm}[t]
   \caption{Accelerated Penalty Method with Consensus (APM-C)}
   \label{D-NC}
\begin{algorithmic}
   \STATE Initialize $x_{(i)}^0=x_{(i)}^{-1}$ for all $i$, and $\eta=\frac{1-\sqrt{1-\sigma_2^2(\W)}}{1+\sqrt{1-\sigma_2^2(\W)}}$.
   \FOR{$k=0,1,2,\cdots$}
   \STATE $y_{(i)}^k=x_{(i)}^k+\frac{L\theta_k-\mu}{L-\mu}\frac{1-\theta_{k-1}}{\theta_{k-1}}\left(x_{(i)}^k-x_{(i)}^{k-1}\right)\quad\forall i,$
   \STATE $z_{(i)}^k=y_{(i)}^k-\frac{1}{L}\nabla f_i(y_{(i)}^k)\quad\forall i,$
   \STATE $z_{(i)}^{k,0}=z_{(i)}^{k,-1}=z_{(i)}^k\quad\forall i,$
   \FOR{$t=0,1,\cdots,T_k-1$}
   \STATE $z_{(i)}^{k,t+1}=(1+\eta)\sum_{j\in\N_i}\W_{ij}z_{(j)}^{k,t}-\eta z_{(i)}^{k,t-1}\quad\forall i,$
   \ENDFOR
   \STATE $x_{(i)}^{k+1}=\frac{L\vartheta_kz_{(i)}^k+\beta_0z_{(i)}^{k,T_k}}{L\vartheta_k+\beta_0}\quad\forall i.$
   \ENDFOR
\end{algorithmic}
\end{algorithm}

\subsubsection{Complexities}

In this section, we discuss the complexities of Algorithm \ref{D-NC}. We first consider the strongly convex case and give the complexities in the following theorem.
\begin{theorem}\label{theorem_D_NC_sc}
Assume that Assumptions \ref{assumption_f} and \ref{assumption_w} hold with $\mu>0$. Setting $\theta_k=\theta=\sqrt{\frac{\mu}{L}}$ for all $\forall k$, $\vartheta_k=(1-\theta)^{k+1}$, and $T_k=O\left(\frac{k\sqrt{\mu/L}}{\sqrt{1-\sigma_2(\W)}}\right)$. Then, Algorithm \ref{D-NC} needs $O\left(\sqrt{\frac{L}{\mu}}\log\frac{1}{\epsilon}\right)$ gradient computations and $O\left(\sqrt{\frac{L}{\mu(1-\sigma_2(\W))}}\log^2\frac{1}{\epsilon}\right)$ total communications to achieve an $\epsilon$-optimal solution $\x$ such that
\begin{eqnarray}
\begin{aligned}\label{cont14}
&\frac{1}{m}\sum_{i=1}^mf_i(\alpha(\x))-\frac{1}{m}\sum_{i=1}^mf_i(x^*)\leq \epsilon\\
&\qquad\frac{1}{m}\sum_{i=1}^m \left\|x_{(i)}-\alpha(\x)\right\|^2\leq \epsilon^2.
\end{aligned}
\end{eqnarray}
\end{theorem}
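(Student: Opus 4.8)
The plan is to read Algorithm~\ref{D-NC} as an \emph{inexact} accelerated proximal gradient (APG) iteration applied to the penalized objective $F_k(\x)=f(\x)+\frac{\beta_0}{2\vartheta_k}\|\Pi\x\|_F^2$, in which only the proximal step~(\ref{apg3}) is carried out approximately, and then to push an APG recursion that simultaneously tolerates this inexactness and the drift of $F_k$ with $k$ (since $\vartheta_k\downarrow 0$). By the closed form noted after~(\ref{proximal_def}), the exact proximal update at step $k$ equals $\frac{L\vartheta_k\z^k+\beta_0\1\alpha(\z^k)^T}{L\vartheta_k+\beta_0}$, from which the iterate produced by Algorithm~\ref{D-NC} differs only along the consensus direction, the difference being $\frac{\beta_0}{L\vartheta_k+\beta_0}\big(\z^{k,T_k}-\1\alpha(\z^k)^T\big)$; so the sole error entering the APG analysis is the inner-loop consensus error $\|\z^{k,T_k}-\1\alpha(\z^k)^T\|_F$.

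\textbf{Inner loop.} Recursion~(\ref{aac}) is a Chebyshev-type acceleration of averaging: it preserves row means (because $\1^T\W=\1^T$) and acts on the residual as $\Pi\z^{t+1}=(1+\eta)\W\Pi\z^t-\eta\Pi\z^{t-1}$ (using $\W\Pi=\Pi\W$). With $\eta=\frac{1-\sqrt{1-\sigma_2^2(\W)}}{1+\sqrt{1-\sigma_2^2(\W)}}$ this gives, as in Lemma~\ref{lemma1}, a bound of the form $\|\z^{k,T_k}-\1\alpha(\z^k)^T\|_F\le c_1\big(1-c_2\sqrt{1-\sigma_2(\W)}\big)^{T_k}\|\Pi\z^k\|_F$ with absolute constants $c_1,c_2>0$. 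Hence $T_k=\Theta\!\big(\tfrac{k\sqrt{\mu/L}}{\sqrt{1-\sigma_2(\W)}}\big)$ with a large enough hidden constant makes this error at most $(1-\sqrt{\mu/L})^{ck}\|\Pi\z^k\|_F$ for any prescribed $c$, and the $\sqrt{1-\sigma_2(\W)}$ sitting inside the contraction rate is exactly what produces the $\sqrt{1-\sigma_2(\W)}$ in the final communication count.

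\textbf{Outer loop.} With $\theta=\sqrt{\mu/L}$ I would construct a Lyapunov function $\xi_k$ of the standard accelerated shape — the objective gap $f(\x^k)-f(\x^*)$ relative to the constrained optimum $\x^*=\1(x^*)^T$, plus the penalty value $\frac{\beta_0}{2\vartheta_{k-1}}\|\Pi\x^k\|_F^2$, plus a quadratic term in the auxiliary momentum sequence generated by~(\ref{apg1})--(\ref{apg2}) — and prove a one-step inequality $\xi_{k+1}\le(1-\theta)\xi_k+E_k$ (here $\mu$-strong convexity of the aggregate $f$ from Assumption~\ref{assumption_f} is used). The error $E_k$ splits into an inexact-prox term, bounded through the inner-loop estimate by a multiple of $\|\z^{k,T_k}-\1\alpha(\z^k)^T\|_F$ times a distance to $\x^*$, and a ``drift'' term of order $\big(\vartheta_k^{-1}-\vartheta_{k-1}^{-1}\big)\|\Pi\x^k\|_F^2$ coming from $F_k-F_{k-1}\ge 0$. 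Both are small \emph{provided} the iterates stay bounded and $\|\Pi\x^k\|_F^2=O(\vartheta_{k-1}^2)$ — but these are precisely what we want to conclude, so the core of the argument is a single induction on $k$ establishing, together, $\xi_k\le C(1-\theta)^k$ and the boundedness of $\x^k,\y^k,\z^k$ (hence of $\|\Pi\z^k\|_F$). Granting the induction, unrolling with $\vartheta_k=(1-\theta)^{k+1}$ and the chosen $T_k$ makes $\sum_{j<k}(1-\theta)^{k-1-j}E_j=O((1-\theta)^k)$, so $\xi_k=O((1-\theta)^k)$. I expect this coupled induction — closing the circular dependence between the error bound and iterate boundedness while also absorbing the moving penalty — to be the main obstacle; the rest is bookkeeping.

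\textbf{Conclusion and complexity count.} From $\xi_K=O((1-\theta)^K)$ one reads off $f(\x^K)-f(\x^*)=O((1-\theta)^K)$ and $\frac{\beta_0}{2\vartheta_{K-1}}\|\Pi\x^K\|_F^2=O((1-\theta)^K)$, hence $\|\Pi\x^K\|_F^2=O\big(\vartheta_{K-1}(1-\theta)^K\big)=O((1-\theta)^{2K})$. Writing $\frac1m\sum_i f_i(\alpha(\x^K))-\frac1m\sum_i f_i(x^*)=\frac1m\big(f(\x^K)-f(\x^*)\big)+\frac1m\sum_i\big(f_i(\alpha(\x^K))-f_i(x_{(i)}^K)\big)$ and bounding the last sum by $\frac1m\|\nabla f(\1\alpha(\x^K)^T)\|_F\,\|\Pi\x^K\|_F$ via convexity (the gradient at the consensus point is bounded because the iterates converge) shows both quantities in~(\ref{cont14}) are $O\big((1-\sqrt{\mu/L})^K\big)$. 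Choosing $K=O\big(\sqrt{L/\mu}\,\log\tfrac1\epsilon\big)$ drives them below $\epsilon$ and $\epsilon^2$ respectively; since Algorithm~\ref{D-NC} performs one gradient evaluation per outer iteration, this is the claimed $O\big(\sqrt{L/\mu}\log\tfrac1\epsilon\big)$ gradient complexity, while the total number of communications is $\sum_{k=0}^{K-1}(T_k+O(1))=O\!\big(\sum_{k<K}\tfrac{k\sqrt{\mu/L}}{\sqrt{1-\sigma_2(\W)}}\big)=O\!\big(\tfrac{K^2\sqrt{\mu/L}}{\sqrt{1-\sigma_2(\W)}}\big)=O\!\big(\sqrt{\tfrac{L}{\mu(1-\sigma_2(\W))}}\,\log^2\tfrac1\epsilon\big)$.
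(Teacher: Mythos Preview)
Your overall architecture --- inexact APG on the penalized problem, Chebyshev consensus in the inner loop, a Lyapunov contracting at rate $1-\theta$, induction on iterate boundedness to control $\|\Pi\z^k\|_F$ and hence $T_k$, and the final complexity arithmetic --- is the paper's. Two places where the paper's execution differs from your sketch are worth noting.

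First, the ``drift'' you expect from $F_k-F_{k-1}$ is not there. Because $\vartheta_k=(1-\theta)\vartheta_{k-1}$, one has $(1-\theta)\cdot\tfrac{\beta_0}{2\vartheta_k}\|\Pi\x^k\|_F^2=\tfrac{\beta_0}{2\vartheta_{k-1}}\|\Pi\x^k\|_F^2$ exactly, so the penalty piece of $(1-\theta)\xi_k$ already matches what the one-step APG inequality produces on the right; nothing extra needs to be absorbed. This is precisely the role of the hypothesis $\tfrac{1-\theta_k}{\vartheta_k}=\tfrac{1}{\vartheta_{k-1}}$ in Lemma~\ref{lemma0}. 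The ``main obstacle'' you flag therefore reduces to the iterate-boundedness induction alone, and the paper also runs that induction (on $\|\Pi\z^k\|_F$, in the proof of the theorem itself).

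Second, the paper's Lyapunov is Lagrangian rather than purely penal: it uses $\rho_k=f(\x^k)-f(\x^*)+\langle\lambda^*,\Pi\x^k\rangle\ge 0$ (Lemma~\ref{lemma03}) and $\tfrac{\vartheta_{k-1}}{2\beta_0}\bigl\|\tfrac{\beta_0}{\vartheta_{k-1}}\Pi\x^k-\lambda^*\bigr\|_F^2\ge 0$, with a bounded multiplier $\lambda^*$ supplied by Lemma~\ref{lemma02}. Expanding shows this equals your $f(\x^k)-f(\x^*)+\tfrac{\beta_0}{2\vartheta_{k-1}}\|\Pi\x^k\|_F^2$ plus the offset $\tfrac{\vartheta_{k-1}}{2\beta_0}\|\lambda^*\|_F^2$, but the regrouping buys nonnegativity of every summand, so the separate rates for $f(\x^k)-f(\x^*)$, $\|\Pi\x^k\|_F$, and $\|\w^k-\x^*\|_F$ can be read off directly (Theorem~\ref{theorem_DNC_sc}). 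In your formulation $f(\x^k)-f(\x^*)$ may be negative (since $\x^*$ is only constrained-optimal), so to disentangle the function-value and consensus rates you would end up invoking the saddle-point lower bound anyway. The paper also handles the cross error $\sqrt{\varepsilon_k}\,\|\w^{k+1}-\x^*\|_F$ via the implicit-recursion Lemma~\ref{schnidt-lemma} rather than by bootstrapping boundedness of $\w^k$, which decouples that issue from the induction you propose.
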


When we drop the strong-convexity assumption, we have the following theorem.
\begin{theorem}\label{theorem_D_NC_ns}
Assume that Assumptions \ref{assumption_f} and \ref{assumption_w} hold with $\mu=0$. Let sequences $\{\theta_k\}$ and $\{\vartheta_k\}$ satisfy $\theta_0=1$, $\frac{1-\theta_k}{\theta_k^2}=\frac{1}{\theta_{k-1}^2}$, and $\vartheta_k=\theta_k^2$. Setting $T_k=O\left(\frac{\log k}{\sqrt{1-\sigma_2(\W)}}\right)$ and $\beta_0\geq L\|\nabla f(\x^*)\|_F^2$. Then, Algorithm \ref{D-NC} needs $O\left(\sqrt{\frac{L}{\epsilon}}\right)$ gradient computations and $O\left(\sqrt{\frac{L}{\epsilon(1-\sigma_2(\W))}}\log\frac{1}{\epsilon}\right)$ total communications to achieve an $\epsilon$-optimal solution $\x$ such that (\ref{cont14}) holds.
\end{theorem}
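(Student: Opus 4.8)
The plan is to analyze Algorithm \ref{D-NC} as an inexact accelerated proximal gradient method applied to the penalized problem (\ref{penalty2}), following the template of \cite{Beck-2009-APG} together with the inexact-APG analysis in the style of \cite{schmidt2011}. First I would establish the inner-loop guarantee: since the accelerated average consensus (\ref{aac}) converges linearly with rate governed by $\sqrt{1-\sigma_2(\W)}$, after $T_k=O\!\left(\frac{\log k}{\sqrt{1-\sigma_2(\W)}}\right)$ inner iterations one has $\|\z^{k,T_k}-\1\alpha(\z^k)\|_F^2 \leq c\,(1-\sigma_2(\W))^{T_k}\|\Pi\z^k\|_F^2$, and therefore $\x^{k+1}$ from the last line of Algorithm \ref{D-NC} differs from the exact proximal point (\ref{proximal_def}) by an error $\e^k$ whose size decays polynomially fast in $k$ (e.g. like $1/k^2$ after absorbing the $\log k$ into the exponent). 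This is exactly the ``$\epsilon$-subgradient / $\epsilon$-proximal'' perturbation that inexact APG tolerates.

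Next I would set up the Lyapunov (estimate-sequence) inequality for inexact APG on the $L$-smooth function $f(\x)+\frac{\beta_0}{2\vartheta_k}\|\Pi\x\|_F^2$. With $\theta_0=1$, $\frac{1-\theta_k}{\theta_k^2}=\frac{1}{\theta_{k-1}^2}$ (so $\theta_k=O(1/k)$) and $\vartheta_k=\theta_k^2$, the standard one-step descent lemma for APG, carefully modified to account for the proximal inexactness $\e^k$ and for the fact that the penalty weight $\beta_0/\vartheta_k$ \emph{changes} from iteration to iteration, yields a telescoping bound of the form
\begin{equation}\notag
\frac{1}{\theta_k^2}\!\left(F_{\beta}(\x^{k+1})-F_{\beta}(\x)\right)+\frac{L}{2}\|\u^{k+1}-\x\|_F^2 \;\leq\; \sum_{j\le k}(\text{error terms from }\e^j),
\end{equation}
evaluated at $\x=\x^*$, where $F_\beta$ denotes the penalized objective and $\u^{k}$ is the usual APG auxiliary sequence. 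The monotonicity $\vartheta_k\downarrow 0$ and the condition $\beta_0\ge L\|\nabla f(\x^*)\|_F^2$ are what make the penalty-weight-change terms have the right sign (or be controllably small); one uses $\Pi\x^*=\0$ so that $\frac{\beta_0}{2\vartheta_k}\|\Pi\x^*\|_F^2=0$. The $\beta_0\ge L\|\nabla f(\x^*)\|_F^2$ condition is the familiar trick (as in \cite{Yuan-2016}) ensuring that the constraint residual $\|\Pi\x^k\|_F$ is controlled by the dual variable magnitude, i.e. $\frac{\beta_0}{\vartheta_k}\|\Pi\x^{k+1}\|_F^2 \lesssim \theta_k^2 \cdot (\text{const})$.

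From the telescoped inequality I would then extract the two bounds in (\ref{cont14}). Because $1/\theta_k^2 = \Theta(k^2)$, the objective gap $f(\x^{k+1})-f(\x^*)$ (in aggregate form, which equals $m$ times the averaged quantity once one also bounds $|f(\x^{k+1})-\sum_i f_i(\alpha(\x^{k+1}))|$ using $L$-smoothness and the consensus residual) is $O(1/k^2)$; similarly $\|\Pi\x^{k+1}\|_F^2 = O(\vartheta_k/\beta_0) \cdot O(1) = O(1/k^2)$, which after dividing by $m$ gives the $\epsilon^2$ consensus bound — and crucially these two are balanced, so both reach accuracy $\epsilon$ at $k=O(\sqrt{L/\epsilon})$ outer iterations. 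Each outer iteration costs one gradient computation, giving $O(\sqrt{L/\epsilon})$ gradient evaluations, and $\sum_{k=1}^{O(\sqrt{L/\epsilon})} T_k = \sum_k O\!\left(\frac{\log k}{\sqrt{1-\sigma_2(\W)}}\right) = O\!\left(\sqrt{\frac{L}{\epsilon(1-\sigma_2(\W))}}\log\frac{1}{\epsilon}\right)$ communications. The main obstacle I anticipate is the bookkeeping for the \emph{varying} penalty coefficient inside the accelerated estimate sequence: one must verify that the extra terms produced when $\beta_0/\vartheta_{k}$ replaces $\beta_0/\vartheta_{k-1}$ telescope favorably rather than accumulating, and simultaneously that the accumulated proximal errors $\sum_j \frac{1}{\theta_j}\|\e^j\|_F$ (which is the quantity that must stay bounded for inexact APG to retain the $O(1/k^2)$ rate) indeed converge — this is what dictates the precise $\log k$ growth of $T_k$ and forces the geometric-versus-polynomial tradeoff in choosing the inner iteration count.
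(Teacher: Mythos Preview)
Your outline is broadly aligned with the paper's approach, but the way you propose to set up the Lyapunov function is where your sketch and the paper diverge, and this is exactly the ``main obstacle'' you flagged. You plan to track $\frac{1}{\theta_k^2}\bigl(F_{\beta_k}(\x^{k+1})-F_{\beta_k}(\x^*)\bigr)+\frac{L}{2}\|\u^{k+1}-\x^*\|_F^2$ with $F_{\beta_k}=f+\frac{\beta_0}{2\vartheta_k}\|\Pi\cdot\|_F^2$. The paper instead passes to the Lagrangian: it fixes a KKT multiplier $\lambda^*$ (with $\|\lambda^*\|_F\le\|\nabla f(\x^*)\|_F$ by Lemma~\ref{lemma02}), defines $\rho_{k+1}=f(\x^{k+1})-f(\x^*)+\langle\lambda^*,\Pi\x^{k+1}\rangle\ge 0$, and uses the Lyapunov
\[
\ell_{k+1}^2=\frac{\rho_{k+1}}{\theta_k^2}+\frac{1}{2\beta_0}\Bigl\|\frac{\beta_0}{\vartheta_k}\Pi\x^{k+1}-\lambda^*\Bigr\|_F^2+\frac{L}{2}\|\w^{k+1}-\x^*\|_F^2.
\]
The point is that the relation $\frac{1-\theta_k}{\vartheta_k}=\frac{1}{\vartheta_{k-1}}$ turns the penalty inner product into a clean difference of squared dual residuals (Lemma~\ref{lemma0}), so the ``extra terms'' you worried about do telescope exactly. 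Your penalized-objective version can be made to work, but only after you rediscover this same cancellation and additionally argue a uniform lower bound on $F_{\beta_k}(\x^{k+1})-f(\x^*)$ (needed because $\x^*$ does not minimize $F_{\beta_k}$, so your Lyapunov is not manifestly nonnegative and Lemma~\ref{schnidt-lemma} does not apply directly). The Lagrangian framing makes both issues disappear: $\rho_k\ge 0$ by saddle-point optimality, and the dual-residual term immediately yields $\|\Pi\x^{K+1}\|_F=O(\theta_K^2)=O(1/K^2)$, i.e.\ $\|\Pi\x^{K+1}\|_F^2=O(1/K^4)$, not the $O(1/K^2)$ you wrote (your weaker bound would not give the $\epsilon^2$ consensus accuracy in \eqref{cont14} at $K=O(\sqrt{L/\epsilon})$).

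Two smaller points. First, the proximal error must decay faster than you indicated: the paper takes $\varepsilon_k\le 1/(k+1)^6$ so that both $\sum_k \varepsilon_k/\theta_k^2$ and $\sum_k \sqrt{\varepsilon_k}/\theta_k$ converge; $\varepsilon_k\sim 1/k^2$ would make the second sum diverge. Second, to justify that $T_k=O(\log k/\sqrt{1-\sigma_2(W)})$ inner steps suffice, you need $\|\Pi\z^k\|_F$ uniformly bounded, and this is not automatic: the paper closes this loop by an induction (bounding $\|\x^k-\x^*\|_F$ via the Lyapunov, then $\|\Pi\z^k\|_F$ via smoothness of $f$ and the momentum relation), which you should include. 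The role of $\beta_0\ge L\|\nabla f(\x^*)\|_F^2$ in the paper is only to normalize constants in the final bound, not to make the telescoping work.
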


\subsection{Accelerated Penalty Method for Nonsmooth Distributed Optimization}\label{sec:dng}
In this section, we consider the nonsmooth problem (\ref{problem}). From Assumption \ref{assumption_w} and the definition in (\ref{define_u}), we know $\I\succeq\U\succeq\0$, and $x_{(1)}=\cdots=x_{(m)}$ is equivalent to $\U\x=\0$ \cite{dasent}. Thus, similar to (\ref{problem1}), we can reformulate problem (\ref{problem}) as
\begin{eqnarray}
\begin{aligned}\label{problem3}
\min_{\x\in\R^{m\times n}}F(\x)\equiv f(\x)+h(\x)\quad \mbox{s.t.}\quad\U\x=\0.
\end{aligned}
\end{eqnarray}

Similar to Section \ref{sec:dnc}, we also further rewrite the problem as a penalized problem and use APG with increasing penalties to minimize the penalized objective $F(\x)+\frac{\beta_0}{2\vartheta_k}\|\U\x\|_F^2$. However, due to the nonsmooth term $h(\x)$, we cannot compute the proximal mapping of $h(\x)+\frac{\beta_0}{2\vartheta_k}\|\U\x\|_F^2$ efficiently. Thus, we use a slightly different strategy here. Specifically, we first compute the gradient of $f(\x)+\frac{\beta_0}{2\vartheta_k}\|\U\x\|_F^2$ at some extrapolated point $\y$, i.e., $\nabla f(\y)+\frac{\beta_0}{\vartheta_k}\U^2\y$, and then compute the inexact proximal mapping of $h(\x)$. We describe the iterations as follows:
\begin{subequations}
\begin{align}
&\hspace*{-0.2cm}\y^k=\x^k+\frac{\theta_k(1-\theta_{k-1})}{\theta_{k-1}}(\x^k-\x^{k-1}),\label{apgns1}\\
&\hspace*{-0.2cm}\s^k=\nabla f(\y^k)+\frac{\beta_0}{\vartheta_k}\U^2\y^k,\label{apgns2}\\
&\hspace*{-0.2cm}\x^{k+1}\hspace*{-0.05cm}\approx\hspace*{-0.05cm} \argmin_{\x\in\R^{m\times n}} h(\x)\hspace*{-0.05cm}+\hspace*{-0.05cm}\<\s^k,\x\>\hspace*{-0.05cm}+\hspace*{-0.05cm}\left(\hspace*{-0.05cm}\frac{L}{2}\hspace*{-0.05cm}+\hspace*{-0.05cm}\frac{\beta_0}{2\vartheta_k}\hspace*{-0.05cm}\right)\hspace*{-0.05cm}\|\x\hspace*{-0.05cm}-\hspace*{-0.05cm}\y^k\|_F^2\hspace*{-0.05cm}\label{apgns3}.
\end{align} \end{subequations}
The reason why we use $\U$ in (\ref{problem3}), rather than $\Pi$, is that $\U^2\y^k$ can be efficiently computed, which corresponds to the gossip-style communications. Otherwise, we need to compute the average across $y^k_{(1)},...,y^k_{(m)}$, which cannot be achieved with closed form solution in the distributed environment.

When the proximal mapping of $h(\x)$, i.e., $\mbox{Prox}_{h}(\z)=\argmin_{\x\in\R^{m\times n}} h(\x)+\frac{1}{2}\|\x-\z\|^2$ for some $\z$, has closed form solution or can be easily computed, step (\ref{apgns3}) has a low computation cost, which reduces to
\begin{eqnarray}
\begin{aligned}\label{apmns_s3}
\x^{k+1}\hspace*{-0.05cm}=\hspace*{-0.05cm}\mbox{Prox}_h\hspace*{-0.05cm}\left(\hspace*{-0.05cm}\y^k\hspace*{-0.05cm}-\hspace*{-0.05cm}\frac{1}{L\hspace*{-0.05cm}+\hspace*{-0.05cm}\beta_0/\vartheta_k}\hspace*{-0.05cm}\left(\hspace*{-0.05cm}\nabla f(\y^k)\hspace*{-0.05cm}+\hspace*{-0.05cm}\frac{\beta_0}{\vartheta_k}\U^2\y^k\hspace*{-0.05cm}\right)\hspace*{-0.05cm}\right).\hspace*{-0.05cm}
\end{aligned}
\end{eqnarray}
We can see that when we set a large penalty parameter $\beta$, i.e., exchange $\frac{\beta_0}{\vartheta_k}$ with a large $\beta$ such that $\beta\gg L$ in (\ref{apmns_s3}), (\ref{apmns_s3}) approximately reduces to $\x^{k+1}\approx\mbox{Prox}_h\left(\y^k-\U^2\y^k\right)$ and $\nabla f(\y^k)$ is flooded by the large penalty parameters. This is another reason to use the increasing penalty parameters.

When the proximal mapping of $h(\x)$ does not have a closed form solution, we borrow the idea of gradient and communication sliding proposed in \cite{lan16gs,lan16gs2,lan16gs3,Lam-2017}, which skips the computations of $\nabla f$ and the inter-node communications  from time to time so that only $O(1/\epsilon)$ gradient evaluations and communications are needed in the $O(1/\epsilon^2)$ iterations required to solve problem (\ref{problem3}). Specifically, we incorporate a subgradient descent procedure to solve the subproblem in (\ref{apgns3}) with a sliding period $T_k$, which is also adopted by \cite{scaman-2018}. The subgradient descent is described as follows for $T_k$ iterations:
\begin{eqnarray}
\begin{aligned}\notag
\z^{k,t+1}=\argmin_{\z\in\R^{m\times n}}& \<\hat\nabla h(\z^{k,t}),\z\>+\<\s^k,\z\>\\
&+\left(\hspace*{-0.05cm}\frac{L}{2}\hspace*{-0.05cm}+\hspace*{-0.05cm}\frac{\beta_0}{2\vartheta_k}\hspace*{-0.05cm}\right)\|\z\hspace*{-0.05cm}-\hspace*{-0.05cm}\y^k\|_F^2\hspace*{-0.05cm}+\hspace*{-0.05cm}\frac{1}{2\eta_k}\|\z\hspace*{-0.05cm}-\hspace*{-0.05cm}\z^{k,t}\|_F^2.
\end{aligned}
\end{eqnarray}
We describe the method in a distributed way in Algorithm \ref{D-NG}.
\begin{algorithm}[t]
   \caption{Accelerated Penalty Method (APM)}
   \label{D-NG}
\begin{algorithmic}
   \STATE Initialize $x_{(i)}^0=x_{(i)}^{-1}$, and $z_{(i)}^{-1,T_{-1}}=x_{(i)}^0$ for all $i$.
   \FOR{$k=0,1,2,\cdots,K$}
   \STATE $y_{(i)}^k=x_{(i)}^k+\frac{\theta_k(1-\theta_{k-1})}{\theta_{k-1}}(x_{(i)}^k-x_{(i)}^{k-1})\quad\forall i,$
   \STATE $\s_{(i)}^k=\nabla f_i(y_{(i)}^k)+\frac{\beta_0}{\vartheta_k}\left(y_{(i)}^k-\sum_{j=1}^m \W_{i,j}y_{(j)}^k\right)\quad\forall i,$
   \STATE $z_{(i)}^{k,0}=z_{(i)}^{k-1,T_{k-1}}\quad\forall i$,
   \FOR{$t=0,1,\cdots,T_k-1$}
   \STATE $z_{(i)}^{k,t+1}=\argmin_{z\in\R^n} \<\hat\nabla h_i(z_{(i)}^{k,t})+\s_{(i)}^k,z\>$
   \STATE $\hspace*{1cm}+\left(\frac{L}{2}+\frac{\beta_0}{2\vartheta_k}\right)\|z-y_{(i)}^k\|^2+\frac{1}{2\eta_k}\|z-z_{(i)}^{k,t}\|^2\quad\forall i.$
   \ENDFOR
   \STATE $x_{(i)}^{k+1}=\frac{\sum_{t=0}^{T_k-1}z_{(i)}^{k,t+1}}{T_k}\quad \forall i$.
   \ENDFOR
\end{algorithmic}
\end{algorithm}

\subsubsection{Complexities}
Introduce constants $R_1$ and $R_2$ such that
\begin{equation}
\hspace*{-0.2cm}\|x_{(i)}^0-x^*\|^2\leq R_1^2\quad\mbox{and}\quad\|\nabla f_i(x^*)\|^2\leq R_2^2\quad\mbox{for all } i, \label{constantR}
\end{equation}
and assume $R_1\geq 1$ for simplicity. Then, we describe the convergence rate for Algorithm \ref{D-NG} in the following theorem.
\begin{theorem}\label{the1}
Assume that Assumptions \ref{assumption_f}, \ref{assumption_h} and \ref{assumption_w} hold with $\mu=0$. Let sequences $\{\theta_k\}$ and $\{\vartheta_k\}$ satisfy $\theta_0=1$, $\frac{1-\theta_k}{\theta_k}=\frac{1}{\theta_{k-1}}$, and $\vartheta_k=\theta_k$. Set $T_k=K(1-\sigma_2(\W))$, $\eta_k=\frac{\theta_k}{KM\sqrt{1-\sigma_2(\W)}}$, and $\beta_0=\frac{\max\left\{M,L\right\}}{\sqrt{1-\sigma_2(\W)}}$, where $K$ is the number of outer iterations. Then, for Algorithm \ref{D-NG}, we have
\begin{eqnarray}
&&\frac{1}{m}\sum_{i=1}^m F_i\left(\alpha(\x^K)\right)-\frac{1}{m}\sum_{i=1}^m F_i(x^*)\notag\\
&\leq& \frac{\beta_0}{K}\left(31+\frac{8}{K\sqrt{1-\sigma_2(\W)}}\right)\left(R_1+\frac{R_2}{L}\right)^2,\notag
\end{eqnarray}
and
\begin{eqnarray}
&&\frac{1}{m}\sum_{i=1}^m \left\|x_{(i)}^K-\alpha(\x^K)\right\|^2\leq \frac{16\beta_0^2}{K^2M^2}\left(R_1+\frac{R_2}{L}\right)^2.\notag
\end{eqnarray}
\end{theorem}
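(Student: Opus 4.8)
The plan is to view Algorithm \ref{D-NG} as an inexact accelerated proximal gradient method applied to the penalized objective $\Phi_k(\x) = f(\x) + h(\x) + \frac{\beta_0}{2\vartheta_k}\|\U\x\|_F^2$, where the inexactness comes from running only $T_k$ steps of the inner subgradient scheme instead of solving \eqref{apgns3} exactly. First I would record the aggregate consequences of the assumptions: $f$ is $L$-smooth, $h$ is $(\sqrt m M)$-Lipschitz, and $\frac{\beta_0}{\vartheta_k}\U^2$ adds $\frac{\beta_0}{\vartheta_k}\|\U\|_2^2 \le \frac{\beta_0}{\vartheta_k}$ to the smoothness modulus in the direction used by the extrapolation; this is why the quadratic coefficient in \eqref{apgns3} is chosen as $\frac{L}{2}+\frac{\beta_0}{2\vartheta_k}$. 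With $\theta_k$ defined by $\frac{1-\theta_k}{\theta_k}=\frac{1}{\theta_{k-1}}$, $\theta_0=1$, one has the standard bound $\theta_k \le \frac{2}{k+2}$ and $\frac{1}{\theta_k^2} = \frac{1}{\theta_{k-1}^2} + \frac{1}{\theta_{k-1}}$-type relations, which drive the $O(1/K)$ outer rate.

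The core is a one-iteration descent inequality. For each $k$ I would analyze the $T_k$-step inner subgradient loop on the strongly convex (modulus $\frac{\beta_0}{\vartheta_k}$, since $L$ can be dropped into the smooth part) function $\psi_k(\z) = h(\z) + \<\s^k,\z\> + (\frac{L}{2}+\frac{\beta_0}{2\vartheta_k})\|\z-\y^k\|_F^2$, with the prox/mirror step size $\eta_k$. A telescoped sum over $t=0,\dots,T_k-1$, together with convexity of $h$ and the $\sqrt m M$-Lipschitz bound, gives a bound on $\psi_k(\x^{k+1}) - \psi_k(\z^\star_k)$ of the form $\frac{\|\z^{k,0}-\z^\star_k\|_F^2}{2\eta_k T_k} + \frac{\eta_k m M^2}{2}$ plus the gain from strong convexity; crucially the warm start $\z^{k,0}=\z^{k-1,T_{k-1}}$ lets the leading error term telescope across outer iterations rather than being paid afresh each $k$. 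Substituting the chosen $\eta_k = \frac{\theta_k}{KM\sqrt{1-\sigma_2(\W)}}$ and $T_k = K(1-\sigma_2(\W))$ makes the per-iteration inner error $O\!\big(\frac{\beta_0 M \cdot (\text{stuff})}{K}\big)$ after using $\frac{1}{\eta_k T_k} = \frac{M}{\theta_k\sqrt{1-\sigma_2(\W)}}$ and $\beta_0 = \frac{\max\{M,L\}}{\sqrt{1-\sigma_2(\W)}}$. Plugging this inexact step into the accelerated gradient estimate-sequence (or the potential $\frac{1}{\theta_k^2}(\Phi(\x^{k+1})-\Phi(\x)) + \frac{L+\beta_0/\vartheta_k}{2}\|\cdot\|_F^2$ Lyapunov argument) and telescoping over $k=0,\dots,K-1$ yields $\Phi_K(\x^K) - \Phi_K(\x^\star) \le \frac{\text{const}\cdot\beta_0}{K}(R_1 + R_2/L)^2$ after bounding the accumulated inner errors by a geometric/arithmetic series (this is where the $31 + \frac{8}{K\sqrt{1-\sigma_2(\W)}}$ constant is assembled).

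Finally I would convert the penalized bound into the two claimed inequalities. Since $\U\x^\star=\bzero$ and $f+h$ attains its constrained minimum at $\x^\star$, $\Phi_K(\x^\star) = F(\x^\star)$, so $F(\x^K) + \frac{\beta_0}{2\vartheta_K}\|\U\x^K\|_F^2 - F(\x^\star) \le \frac{c\beta_0}{K}(R_1+R_2/L)^2$. Using a Lagrangian/duality lower bound — i.e.\ $F(\x) - F(\x^\star) \ge -\|\lambda^\star\|\,\|\U\x\|_F$ for an optimal dual variable $\lambda^\star$, whose norm is controlled by $R_2$ and $M$ via the optimality conditions — together with $\vartheta_K = \theta_K = O(1/K)$ gives first a bound on $\|\U\x^K\|_F^2$, hence on $\|\Pi\x^K\|_F^2 = \frac{1}{m}\sum_i\|x_{(i)}^K - \alpha(\x^K)\|^2\cdot m$ via $\|\Pi\x\|_F^2 \le \frac{1}{1-\sigma_2(\W)}\|\U\x\|_F^2$; substituting back into the displayed inequality and using $L$-smoothness of $f$ and Lipschitzness of $h$ to pass from $F(\x^K)$ to $\frac1m\sum_i F_i(\alpha(\x^K))$ closes the argument. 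I expect the main obstacle to be the bookkeeping in the inner-loop telescoping — correctly tracking how the warm-started subgradient error accumulates over outer iterations with time-varying $\eta_k$ and the strong-convexity gain, so that the total stays $O(\beta_0/K)$ and not $O(K \cdot \text{something})$; getting the exact numerical constants ($31$, $8$, $16$) right is the tedious but routine part.
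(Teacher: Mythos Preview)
Your overall skeleton (inner subgradient error $\to$ outer accelerated telescoping $\to$ dual conversion) matches the paper, and the warm-start telescoping of the inner term is exactly what is used. But there is a genuine gap in the potential you propose, and it is precisely the place where the increasing-penalty trick bites.

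You plan to track $\frac{1}{\theta_k^2}\bigl(\Phi_k(\x^{k+1})-\Phi_k(\x^\star)\bigr)$ with $\Phi_k(\x)=F(\x)+\frac{\beta_0}{2\vartheta_k}\|\U\x\|_F^2$. Two problems. First, the recursion here is $\frac{1-\theta_k}{\theta_k}=\frac{1}{\theta_{k-1}}$ (so $\theta_k=\frac{1}{k+1}$), not the quadratic one; the correct divisor is $\theta_k$, not $\theta_k^2$, and this is what produces an $O(1/K)$ (not $O(1/K^2)$) rate. Second, and more importantly, $\Phi_k$ changes with $k$: the penalty piece at step $k$ is $\frac{\beta_0}{2\vartheta_{k-1}}\|\U\x^k\|_F^2$ and at step $k+1$ it is $\frac{\beta_0}{2\vartheta_k}\|\U\x^{k+1}\|_F^2$, and these do not telescope. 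Bringing in the dual variable only at the very end, as you propose, does not fix this.

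The paper's remedy is to build $\lambda^\star$ into the Lyapunov from the outset. It tracks $\rho_{k+1}=F(\x^{k+1})-F(\x^\star)+\langle\lambda^\star,\U\x^{k+1}\rangle$ (nonnegative by Lemma~\ref{lemma03}) together with a separate ``dual residual'' term $\frac{1}{2\beta_0}\bigl\|\frac{\beta_0}{\vartheta_k}\U\x^{k+1}-\lambda^\star\bigr\|_F^2$, and the outer potential is
\[
\ell_{k+1}=\frac{\rho_{k+1}}{\theta_k}+\frac{1}{2\beta_0}\Bigl\|\frac{\beta_0}{\vartheta_k}\U\x^{k+1}-\lambda^\star\Bigr\|_F^2+\Bigl(\frac{L\theta_{k+1}}{2}+\frac{\beta_0}{2}\Bigr)\|\w^{k+1}-\x^\star\|_F^2+\frac{M}{2\sqrt{1-\sigma_2(\W)}}\|\v^{k+1,0}-\x^\star\|_F^2.
\]
The identity $\frac{1-\theta_k}{\vartheta_k}=\frac{1}{\vartheta_{k-1}}$ then makes the $\U$-terms telescope exactly across iterations with the \emph{varying} penalty. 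Relatedly, in the inner analysis the paper does not compare to the exact minimizer $\z_k^\star$ of $\psi_k$; it bounds each subgradient step directly against the accelerated target $\widetilde\x^{k,*}=(1-\theta_k)\x^k+\theta_k\x^\star$, which is what produces the $\|\v^{k,0}-\x^\star\|_F^2-\|\v^{k+1,0}-\x^\star\|_F^2$ telescoping you correctly anticipated. Once Lemma~\ref{lemma09} gives $\ell_{k+1}\le \ell_k+\frac{mM^2\eta_k}{2\theta_k}$, your choice of $\eta_k$ makes the increment $\frac{mM}{2K\sqrt{1-\sigma_2(\W)}}$, and summing gives $\ell_K\le C_{12}$; your final conversion via $\|\Pi\x\|_F\le\frac{\|\U\x\|_F}{\sqrt{1-\sigma_2(\W)}}$ and the Lipschitz/smoothness transfer to $\alpha(\x^K)$ is then exactly what the paper does.
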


Consider the simple problem of computing the average of $x_{(1)},\cdots,x_{(m)}$. The accelerated averaged consensus \cite{liu-2011} needs $O\left(\frac{1}{\sqrt{1-\sigma_2(\W)}}\log\frac{1}{\epsilon}\right)$ iterations to find an $\epsilon$-accurate solution. Thus, it is reasonable to assume $K\geq\frac{1}{\sqrt{1-\sigma_2(\W)}}$. Moreover, from the $L$-smoothness of $f_i(x)$, we know $R_2$ is often of the order $O(LR_1)$. Thus, Theorem \ref{the1} establishes the $O\left(\frac{\max\{M,L\}}{\epsilon\sqrt{1-\sigma_2(\W)}}\right)$ communication complexity and the $\sum_{k=1}^KT_k=K^2(1-\sigma_2(\W))=O\left(\frac{\max\{M,L\}^2}{\epsilon^2}\right)$ subgradient computation complexity such that (\ref{cont14}) holds for nonsmooth distributed optimization.

In Theorem \ref{the1}, we set $T_k$ and $\eta_k$ dependent on the number of outer iterations. As explained in Section \ref{sec:dnc}, it is a unpractical parameter setting and moreover, the large $T_k$ and $\frac{1}{\eta_k}$ make the algorithm slow in practice. In the following corollary, we give a more reasonable setting of the parameters at an expense of higher complexities by the order of $\log\frac{1}{\epsilon}$, i.e., $\log K$.
\begin{corollary}\label{lemma_cons-noncons}
Under the settings in Theorem \ref{the1} but with $T_k=\frac{1-\sigma_2(\W)}{\theta_k}$ and $\eta_k=\frac{\theta_k^2}{M\sqrt{1-\sigma_2(\W)}}$, we have
\begin{eqnarray}
&&\frac{1}{m}\sum_{i=1}^m F_i\left(\alpha(\x^K)\right)-\frac{1}{m}\sum_{i=1}^m F_i(x^*)\notag\\
&\leq& \frac{\beta_0\log K}{K}\left(31+\frac{8}{K\sqrt{1-\sigma_2(\W)}}\right)\left(R_1+\frac{R_2}{L}\right)^2,\notag
\end{eqnarray}
and
\begin{eqnarray}
&\frac{1}{m}\sum_{i=1}^m \left\|x_{(i)}^K-\alpha(\x^K)\right\|^2\leq \frac{16\beta_0^2\log K}{K^2M^2}\left(R_1+\frac{R_2}{L}\right)^2.\notag
\end{eqnarray}
\end{corollary}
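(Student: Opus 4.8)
\textbf{Proof proposal for Corollary~\ref{lemma_cons-noncons}.}

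The plan is to reduce Corollary~\ref{lemma_cons-noncons} to Theorem~\ref{the1} by a careful re-inspection of the proof of that theorem, tracking exactly where the constant choices $T_k=K(1-\sigma_2(\W))$ and $\eta_k=\tfrac{\theta_k}{KM\sqrt{1-\sigma_2(\W)}}$ enter. Since $\theta_k=\Theta(1/k)$ under the recursion $\frac{1-\theta_k}{\theta_k}=\frac{1}{\theta_{k-1}}$ with $\theta_0=1$, the new schedule $T_k=\frac{1-\sigma_2(\W)}{\theta_k}$ satisfies $T_k=\Theta\big(k(1-\sigma_2(\W))\big)$, so for $k$ close to $K$ it matches the constant choice up to a factor $\Theta(1)$, but for small $k$ it is smaller. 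First I would isolate, in the proof of Theorem~\ref{the1}, the telescoped bound on the outer iterations: it should take the shape $\sum_k \frac{1}{\theta_k}\big(\text{per-iteration progress}\big) \le (\text{initial terms}) + \sum_k (\text{inner-loop residual}_k)$, where the inner-loop residual at outer step $k$ is controlled by the subgradient-descent analysis over $T_k$ steps with stepsize $\eta_k$, giving a bound of the order $\frac{M^2\eta_k T_k + (\text{distance})^2/\eta_k}{(\text{scaling})}$ — and this is precisely the place where replacing $T_k,\eta_k$ by their $k$-dependent analogues changes the estimate.

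The key steps, in order, are: (i) verify that with $\eta_k=\frac{\theta_k^2}{M\sqrt{1-\sigma_2(\W)}}$ and $T_k=\frac{1-\sigma_2(\W)}{\theta_k}$ one has $\eta_k T_k = \frac{\theta_k\sqrt{1-\sigma_2(\W)}}{M}$, which is exactly $\frac{1}{K}$ times the value $\eta_k T_k = \frac{\theta_k\sqrt{1-\sigma_2(\W)}}{KM}$ in Theorem~\ref{the1} times $K$ — i.e. the product is inflated by a factor $K\theta_k=\Theta(K/k)$ relative to the Theorem~\ref{the1} choice; (ii) likewise check the ``$1/\eta_k$'' term: $\frac{1}{\eta_k}=\frac{M\sqrt{1-\sigma_2(\W)}}{\theta_k^2}$ against $\frac{KM\sqrt{1-\sigma_2(\W)}}{\theta_k}$, a factor $\frac{1}{K\theta_k}$ smaller; (iii) substitute these into the per-outer-step inner residual bound from the sliding analysis and sum over $k=1,\dots,K$. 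Because $\theta_k\sim 2/k$, the first contribution sums to $\sum_{k=1}^K \frac{\theta_k\sqrt{1-\sigma_2(\W)}}{M}\cdot(\text{weight}_k) \sim \sqrt{1-\sigma_2(\W)}\sum_{k=1}^K \frac1k = O(\log K)$ times the corresponding weight, while the ``distance$^2/\eta_k$'' contribution telescopes with a similar $\sum 1/k = O(\log K)$ overhead; collecting, the right-hand side of Theorem~\ref{the1} acquires exactly one extra factor $\log K$, which is the claimed bound. Finally (iv) I would re-run the consensus-violation estimate $\frac1m\sum_i\|x_{(i)}^K-\alpha(\x^K)\|^2$ the same way: that bound comes from the penalty term $\frac{\beta_0}{\vartheta_K}\|\U\x^K\|_F^2$ being controlled by the same telescoped functional, so it inherits the identical $\log K$ factor, giving $\frac{16\beta_0^2\log K}{K^2M^2}(R_1+R_2/L)^2$.

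The main obstacle I anticipate is step (iii): in Theorem~\ref{the1} the inner-loop residuals are summed with the constant $T_k$, and a crucial cancellation or boundedness (e.g. $\sum_k \eta_k T_k \cdot \theta_k^{-?}$ staying $O(1/K)$) relies on $\eta_k T_k$ being \emph{uniformly} $\Theta(\theta_k/K)$; once $\eta_k T_k$ becomes $\Theta(\theta_k)$ the naive sum is $\Theta(1)$ rather than $\Theta(1/K)$, so one must show the extra factor is no worse than $\log K$ and does not, say, produce a polynomial loss. Concretely this means verifying that the weights multiplying the residuals in the telescoped inequality decay like $\theta_k$ (so that $\sum_k \theta_k\cdot\theta_k^{-1}=K$ is avoided and instead one gets $\sum_k \theta_k\cdot\theta_k^0 = \sum_k\theta_k = O(\log K)$, not $O(K)$ — note $\sum_{k=1}^K\theta_k$ with $\theta_k\sim 2/k$ is $O(\log K)$). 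I would also need to double-check the warm-start coupling $z_{(i)}^{k,0}=z_{(i)}^{k-1,T_{k-1}}$ across outer iterations still yields a telescoping distance term when $T_{k-1}$ varies with $k$; I expect this is benign because the analysis only uses $\|z^{k,0}-x^*\|$-type quantities which telescope regardless of how $T_{k-1}$ was chosen, but it must be confirmed. Everything else is a routine re-tracing of the constants in the proof of Theorem~\ref{the1}.
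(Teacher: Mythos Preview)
Your overall strategy---re-trace the proof of Theorem~\ref{the1} with the new $T_k,\eta_k$ and show the only change is an extra $\log K$---is correct and is exactly what the paper does. However, your detailed accounting in steps (i)--(iii) is off, and you are missing the clean abstraction that makes the paper's argument three lines long.

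First, your computation in (i) is wrong. With Theorem~\ref{the1}'s choices one has
\[
\eta_kT_k=\frac{\theta_k}{KM\sqrt{1-\sigma_2(\W)}}\cdot K(1-\sigma_2(\W))=\frac{\theta_k\sqrt{1-\sigma_2(\W)}}{M},
\]
which is \emph{identical} to the product under the Corollary's choices---not inflated by any factor. This is not a coincidence: both parameter settings satisfy the constraint $\frac{\theta_k}{\eta_kT_k}=\frac{M}{\sqrt{1-\sigma_2(\W)}}$ of Lemma~\ref{lemma09}, and that constraint is the \emph{only} hypothesis Lemma~\ref{lemma09} needs. In particular, the telescoping of the warm-start distance term (your worry in the last paragraph) is handled once and for all inside Lemma~\ref{lemma09}, precisely because $\frac{\theta_k}{\eta_kT_k}$ is the same constant for every $k$; it enters the Lyapunov function $\ell_k$ with the fixed coefficient $\frac{M}{2\sqrt{1-\sigma_2(\W)}}$ and telescopes cleanly regardless of how $T_k$ varies.

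Second, the actual source of the $\log K$ is neither $\eta_kT_k$ nor $1/\eta_k$ separately, but the single residual term $\frac{mM^2\eta_k}{2\theta_k}$ in the one-step recursion (\ref{ell_equ}). With the Corollary's $\eta_k=\frac{\theta_k^2}{M\sqrt{1-\sigma_2(\W)}}$ this residual equals $\frac{mM\theta_k}{2\sqrt{1-\sigma_2(\W)}}=\frac{mM}{2(k+1)\sqrt{1-\sigma_2(\W)}}$, and summing over $k=0,\dots,K-1$ gives $\frac{mM(\log K+1)}{\sqrt{1-\sigma_2(\W)}}$ instead of the constant $\frac{mM}{2\sqrt{1-\sigma_2(\W)}}$ obtained in Theorem~\ref{the1}. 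Everything downstream of the bound on $\ell_K$ (the bounds on $\|\U\x^K\|_F$, $F(\x^K)-F(\x^*)$, $\|\Pi\x^K\|_F$, and finally $F(\alpha(\x^K))-F(\x^*)$) is then identical to the proof of Theorem~\ref{the1} with $C_{12}$ inflated by the $\log K$ factor. So the paper's proof is: verify (\ref{Tk_equ}), plug the new $\eta_k$ into (\ref{ell_equ}), sum the harmonic series, and quote the remainder of Theorem~\ref{the1}'s proof verbatim.
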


When $f(\x)$ is strongly convex, we can prove a faster $O\left(\frac{1}{k^2}\right)$ convergence rate for Algorithm \ref{D-NG} with $\theta_k=\frac{2}{k+2}$ and $\vartheta_k=\theta_k^2$. However, the quickly diminishing step-size in step (\ref{apgns3}) makes the algorithm slow in practice. So we omit the discussion for the strongly convex case.

\subsection{Relations of APM-C and APM to the Existing Algorithm Frameworks}

\subsubsection{Difference from the classical penalty method}\label{relation_apm}
To the best of our knowledge, most traditional work analyze the penalty method with a fixed penalty parameter \cite{Lan-2013-PM,ION2017}. Let's discuss the disadvantage of the large and fixed penalty parameter. Take problem (\ref{problem1}) as an example. Let $\{\x^*,\lambda^*\}$ be a pari of KKT point of problem (\ref{problem1}) and $\hat \x^*$ be the minimizer of problem (\ref{penalty2}), from the proof in \cite[Proposition 10]{Lan-2013-PM}, we have
\begin{eqnarray}
\begin{aligned}\notag
f(\x^*)=f(\x^*)+\frac{\beta}{2}\|\Pi\x^*\|_F^2\geq f(\hat\x^*)+\frac{\beta}{2}\|\Pi\hat\x^*\|_F^2.
\end{aligned}
\end{eqnarray}
So for any $\varepsilon$-accurate solution $\x$ of problem (\ref{penalty2}), we have
\begin{eqnarray}
\begin{aligned}\notag
&f(\x)+\frac{\beta}{2}\|\Pi\x\|_F^2-f(\x^*)\\
\leq& f(\x)+\frac{\beta}{2}\|\Pi\x\|_F^2-f(\hat\x^*)-\frac{\beta}{2}\|\Pi\hat\x^*\|_F^2\leq \varepsilon.
\end{aligned}
\end{eqnarray}
On the other hand, since $\x^*=\argmin_{\x} f(\x)+\<\lambda^*, \Pi\x\>$ and $\Pi\x^*=\0$, we have
\begin{eqnarray}
\begin{aligned}\notag
&f(\x^*)=f(\x^*)+\<\lambda^*, \Pi\x^*\>\leq f(\x)+\<\lambda^*, \Pi\x\>\\
\Rightarrow&-\|\lambda^*\|_F\|\Pi\x\|_F\leq f(\x)-f(\x^*).
\end{aligned}
\end{eqnarray}
So $\frac{\beta}{2}\|\Pi\x\|_F^2-\|\lambda^*\|_F\|\Pi\x\|_F\leq \varepsilon$, which leads to
\begin{eqnarray}
\begin{aligned}\notag
\|\Pi\x\|_F\leq \frac{2\|\lambda^*\|_F}{\beta}+\sqrt{\frac{2\varepsilon}{\beta}}=O(\epsilon+\sqrt{\epsilon\varepsilon})
\end{aligned}
\end{eqnarray}
and
\begin{eqnarray}
\begin{aligned}\notag
|f(\x)-f(\x^*)|\leq\max\{\varepsilon,\epsilon+\sqrt{\epsilon\varepsilon}\}
\end{aligned}
\end{eqnarray}
by $\beta=\frac{1}{\epsilon}$. We can see that the accuracy is dominated by $\max\{\epsilon,\varepsilon\}$, and more iterations with smaller $\varepsilon$ will not produce a more accurate solution.

On the other hand, even if $\varepsilon=0$ and $\x=\hat\x^*$ with infinite iterations, we have $\nabla f(\x)+\beta\Pi\x=\0$, which only leads to $\|\Pi\x\|_F=\epsilon\|\nabla f(\x)\|_F=O(\epsilon)$ and $|f(\x)-f(\x^*)|\leq \epsilon$, rather than $\|\Pi\x\|_F= 0$ and $|f(\x)-f(\x^*)|=0$.

\subsubsection{Difference from the classical accelerated first-order algorithms}
We extend the classical accelerated gradient method \cite{Nesterov1983,Nesterov1988,Beck-2009-APG,devolder2014first,schnidt-2011-nips} from the unconstrained problems to the linearly constrained problems via the perspective of the penalty method. However, since we use the increasing penalty parameters at each iteration, i.e., the penalized objective varies at different iterations, the conclusion in \cite{Beck-2009-APG,devolder2014first,schnidt-2011-nips} for the unconstrained problems cannot be directly used for procedures (\ref{apg1})-(\ref{apg3}) and (\ref{apgns1})-(\ref{apgns3}). The increasing penalty parameters make the convergence analysis more challenging.

\subsubsection{Difference from the accelerated gradient sliding method}

\cite{Uribe-2017} combined Nesterov's smoothing technique \cite{Nesterov-smooth} with the accelerated gradient sliding methods \cite{lan16gs,lan16gs2,lan16gs3} to solve the nonsmooth problem (\ref{problem3}) with $f(\x)=0$. In fact, when fixing the penalty parameter as a large one of the order $O(1/\epsilon)$, Algorithm \ref{D-NG} is similar to the one in \cite[Section 6.3]{Uribe-2017}. However, our method adopts increasing penalty parameters such that it avoids having to set a large inner iteration number $T_k$ and a small step-size $\eta_k$ at the beginning of the outer loop, as shown in Corollary \ref{lemma_cons-noncons}. On the other hand, when $f(\x)\neq 0$, as explained in Section \ref{sec:dng}, $\nabla f(\y^k)$ is flooded if we set a large and fixed penalty parameter.


\subsubsection{Difference from the D-NC and D-NG in \cite{Jakovetic-2014}}
Algorithm \ref{D-NC} can be seen as an improvement over the D-NC proposed in \cite{Jakovetic-2014}. Both Algorithm \ref{D-NC} and D-NC use Nesterov's acceleration technique and multi-consensus, and both attain the optimal computation complexity for the nonstrongly convex problems. However, Algorithm \ref{D-NC} is motivated by a constraint-penalty approach while D-NC is developed from the inexact accelerated gradient method \cite{devolder2014first} directly. Moreover, Algorithm \ref{D-NC} can solve both the strongly convex and nonstrongly convex problems while \cite{Jakovetic-2014} only studied the nonstrongly convex case.



As for Algorithm \ref{D-NG}, consider the simple case with $h(\x)=0$ and $\frac{\beta_0}{\vartheta_k}=\frac{k+1}{c}$, then steps (\ref{apgns2}) and (\ref{apgns3}) become
\begin{eqnarray}
\x^{k+1}=\frac{L\y^k+(k+1)\W\y^k/c}{L+(k+1)/c}-\frac{\nabla f(\y^k)}{L+(k+1)/c}.\notag
\end{eqnarray}
Thus, when $(k+1)/c\gg L$, we have $\x^{k+1}\approx \W\y^k-\frac{c}{k+1}\nabla f(\y^k)$ and it approximates the D-NG in \cite{Jakovetic-2014}. Algorithm \ref{D-NG} gives a different explanation of the D-NG, and it improves the D-NG in the sense that it handles a possible nondifferentiable function $h_i(\x)$. The complexity of D-NG is $O\left(\frac{1}{\epsilon(1-\sigma_2(\W))^{1+\xi}}\log\frac{1}{\epsilon}\right)$, where $\xi$ is a small constant. Our complexity, i.e., $O\left(\frac{1}{\epsilon\sqrt{1-\sigma_2(\W)}}\right)$, is better because theirs has the extra $\log\frac{1}{\epsilon}$ factor and is more sensitive to $1-\sigma_2(\W)$.

\section{Proof of Theorems}\label{sec:analysis}
\subsection{Supporting Lemmas}
Before providing a comprehensive convergence analysis for Algorithms \ref{D-NC} and \ref{D-NG}, we first present some useful technical lemmas. We first give the following easy-to-identify identifies.
\begin{lemma}\label{lemma01}
For any $\x,\y,\z,\w\in\R^{m\times n}$, we have the following two identities:
\begin{eqnarray}
\begin{aligned}\notag
&2\hspace*{-0.05cm}\<\x\hspace*{-0.05cm}-\hspace*{-0.05cm}\z,\y\hspace*{-0.05cm}-\hspace*{-0.05cm}\z\>\hspace*{-0.08cm}=\hspace*{-0.08cm}\|\x\hspace*{-0.05cm}-\hspace*{-0.05cm}\z\|_F^2\hspace*{-0.08cm}+\hspace*{-0.08cm}\|\y\hspace*{-0.05cm}-\hspace*{-0.05cm}\z\|_F^2\hspace*{-0.08cm}-\hspace*{-0.08cm}\|\x\hspace*{-0.05cm}-\hspace*{-0.05cm}\y\|_F^2,\\
&2\hspace*{-0.05cm}\<\x\hspace*{-0.05cm}-\hspace*{-0.05cm}\z,\y\hspace*{-0.05cm}-\hspace*{-0.05cm}\w\>\hspace*{-0.08cm}=\hspace*{-0.08cm}\|\y\hspace*{-0.05cm}-\hspace*{-0.05cm}\z\|_F^2\hspace*{-0.08cm}-\hspace*{-0.08cm}\|\w\hspace*{-0.05cm}-\hspace*{-0.05cm}\z\|_F^2\hspace*{-0.08cm}+\hspace*{-0.08cm}\|\x\hspace*{-0.05cm}-\hspace*{-0.05cm}\w\|_F^2\hspace*{-0.08cm}-\hspace*{-0.08cm}\|\x\hspace*{-0.05cm}-\hspace*{-0.05cm}\y\|_F^2.
\end{aligned}
\end{eqnarray}
\end{lemma}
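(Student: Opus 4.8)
The plan is to verify both identities by direct expansion of the Frobenius norms, using only the bilinearity and symmetry of the inner product $\<\x,\y\>=\trace(\x^T\y)$. Nothing beyond the elementary Euclidean case is needed, since $\|\cdot\|_F$ is precisely the Euclidean norm on $\R^{mn}$; these are polarization-type identities in disguise.

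For the first identity, I would start from the decomposition $\x-\y=(\x-\z)-(\y-\z)$ and expand
\[
\|\x-\y\|_F^2=\|\x-\z\|_F^2-2\<\x-\z,\y-\z\>+\|\y-\z\|_F^2.
\]
Solving for the cross term gives $2\<\x-\z,\y-\z\>=\|\x-\z\|_F^2+\|\y-\z\|_F^2-\|\x-\y\|_F^2$, which is exactly the first claim.

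For the second identity, rather than expanding from scratch I would reduce it to the first one. Writing $\y-\w=(\y-\z)-(\w-\z)$ and using bilinearity gives $2\<\x-\z,\y-\w\>=2\<\x-\z,\y-\z\>-2\<\x-\z,\w-\z\>$. Applying the first identity to each of the two inner products on the right-hand side (once with the pair $(\y,\z)$, once with the pair $(\w,\z)$), the two copies of $\|\x-\z\|_F^2$ cancel, and collecting the remaining four terms yields $\|\y-\z\|_F^2-\|\w-\z\|_F^2+\|\x-\w\|_F^2-\|\x-\y\|_F^2$, as required.

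I do not anticipate any genuine obstacle: both statements are routine algebraic identities, and the only point requiring care is the sign bookkeeping when the $\|\x-\z\|_F^2$ terms cancel in the derivation of the second identity from the first.
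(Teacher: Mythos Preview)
Your argument is correct; both identities follow exactly as you describe, and the cancellation of the $\|\x-\z\|_F^2$ terms in the second derivation works out with the signs you indicate. The paper itself does not give a proof of this lemma, merely introducing the two equalities as ``easy-to-identify identities,'' so your direct polarization-style expansion is precisely the intended (and only natural) verification.
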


In the following Lemma, we bound the Lagrange multiplier, which is useful for the complexity analysis in the distributed optimization community.
\begin{lemma}\label{lemma02}
Assume that Assumptions \ref{assumption_f}, \ref{assumption_h} and \ref{assumption_w} hold with $\mu\geq 0$. Then, we have the following properties:
\begin{enumerate}
\item There exists a pair of KKT points $(\x^*,\lambda^*)$ of saddle point problem $\min_{\x}\max_{\lambda} f(\x)+\<\lambda,\Pi\x\>$, such that $\|\lambda^*\|_F\leq\|\nabla f(\x^*)\|_F$.
\item There exists a pair of KKT points $(\x^*,\lambda^*)$ of saddle point problem $\min_{\x}\max_{\lambda} f(\x)+\<\lambda,\U\x\>$, such that $\|\lambda^*\|_F\leq \frac{\|\nabla f(\x^*)\|_F}{\sqrt{1-\sigma_2(\W)}}$.
\item There exists a pair of KKT points $(\x^*,\lambda^*)$ of saddle point problem $\min_{\x}\max_{\lambda} F(\x)+\<\lambda,\U\x\>$, such that $\|\lambda^*\|_F\leq \frac{\sqrt{m}M+\|\nabla f(\x^*)\|_F}{\sqrt{1-\sigma_2(\W)}}$.
\end{enumerate}
\end{lemma}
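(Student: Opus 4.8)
The plan is to establish all three parts by the same two-step recipe: first write down the KKT (optimality) conditions for each saddle-point problem, then choose a \emph{specific} multiplier of minimal norm among all valid multipliers and bound its norm using the structure of the constraint matrix ($\Pi$ or $\U$). For part 1, the KKT conditions for $\min_{\x}\max_{\lambda} f(\x)+\<\lambda,\Pi\x\>$ read $\nabla f(\x^*)+\Pi\lambda^*=\bzero$ and $\Pi\x^*=\bzero$. Since $\Pi=\I-\frac{1}{m}\1\1^T$ is the orthogonal projector onto the subspace $\{\z:\1^T\z=\bzero\}$ (as a map on $\R^{m\times n}$, acting on the rows), it is symmetric, idempotent, and equal to its own pseudoinverse on its range. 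The first step is to observe that $\nabla f(\x^*)$ must lie in the range of $\Pi$: indeed, summing the rows of $\nabla f(\x^*)=-\Pi\lambda^*$ and using $\1^T\Pi=\bzero$ gives $\sum_i\nabla f_i(x^*)=\bzero$, so $\Pi\nabla f(\x^*)=\nabla f(\x^*)$. Hence we may take $\lambda^*=-\Pi\nabla f(\x^*)=-\nabla f(\x^*)$ as a valid multiplier (any other differs by something in $\ker\Pi$, which only increases the norm), giving $\|\lambda^*\|_F=\|\nabla f(\x^*)\|_F$.

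For part 2, the constraint is $\U\x=\bzero$ with $\U=\sqrt{\I-\W}$, and the KKT conditions are $\nabla f(\x^*)+\U\lambda^*=\bzero$, $\U\x^*=\bzero$. As in part 1, from $\W\1=\1$ we get $\U\1=\bzero$, so $\1^T\U=\bzero$, and summing rows of $\nabla f(\x^*)=-\U\lambda^*$ forces $\sum_i\nabla f_i(x^*)=\bzero$; thus $\nabla f(\x^*)$ lies in $\mathrm{range}(\Pi)=\mathrm{range}(\U)$ (the two operators share the same null space $\mathrm{span}\{\1\}$). The minimal-norm multiplier is then $\lambda^*=-\U^{\dagger}\nabla f(\x^*)$, where $\U^{\dagger}$ is the Moore--Penrose pseudoinverse. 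The key estimate is that on $\mathrm{range}(\Pi)$ the smallest nonzero eigenvalue of $\U$ is $\sqrt{1-\sigma_2(\W)}$ (since the eigenvalues of $\I-\W$ restricted to $\1^\perp$ are $1-\sigma_2(\W)\ge\cdots\ge 1-\sigma_m(\W)$), hence $\|\U^{\dagger}\v\|_F\le\frac{1}{\sqrt{1-\sigma_2(\W)}}\|\v\|_F$ for $\v\in\mathrm{range}(\Pi)$. Applying this with $\v=\nabla f(\x^*)$ yields $\|\lambda^*\|_F\le\frac{\|\nabla f(\x^*)\|_F}{\sqrt{1-\sigma_2(\W)}}$.

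For part 3 the objective is the \emph{nonsmooth} $F(\x)=f(\x)+h(\x)$, so the stationarity condition becomes $\bzero\in\nabla f(\x^*)+\partial h(\x^*)+\U\lambda^*$, i.e. there exists a subgradient $\hat\nabla h(\x^*)\in\partial h(\x^*)$ with $\nabla f(\x^*)+\hat\nabla h(\x^*)+\U\lambda^*=\bzero$. The same row-sum argument (using $\1^T\U=\bzero$) shows $\nabla f(\x^*)+\hat\nabla h(\x^*)\in\mathrm{range}(\Pi)$, so we take $\lambda^*=-\U^{\dagger}\big(\nabla f(\x^*)+\hat\nabla h(\x^*)\big)$ and bound as in part 2, getting $\|\lambda^*\|_F\le\frac{\|\nabla f(\x^*)+\hat\nabla h(\x^*)\|_F}{\sqrt{1-\sigma_2(\W)}}\le\frac{\|\nabla f(\x^*)\|_F+\|\hat\nabla h(\x^*)\|_F}{\sqrt{1-\sigma_2(\W)}}$, and finally $\|\hat\nabla h(\x^*)\|_F\le\sqrt{m}M$ since $h$ is $(\sqrt{m}M)$-Lipschitz by Assumption \ref{assumption_h}. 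The main obstacle is a careful handling of existence: one must first argue that a KKT pair exists at all (Slater-type condition is automatic here because the constraint is linear and feasible — take any consensus point), and then verify that the minimal-norm multiplier genuinely lives in $\mathrm{range}(\U)$ so that the pseudoinverse bound applies; the spectral computation of the restricted smallest singular value of $\U$ is the only quantitative ingredient and is routine once the null-space structure is pinned down.
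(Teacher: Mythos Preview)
Your proof plan is correct and complete. The paper itself does not supply a proof of this lemma; it simply cites \cite[Theorem~2]{Lam-2017}. Your argument --- writing down the stationarity condition, observing that the (sub)gradient at $\x^*$ must lie in the range of the constraint operator (because $\1^T\Pi=\bzero$ and $\1^T\U=\bzero$), and then selecting the minimal-norm multiplier via the pseudoinverse together with the spectral bound $\sigma_{\min}^{+}(\U)=\sqrt{1-\sigma_2(\W)}$ --- is exactly the standard route and almost certainly what the cited reference does as well. The existence of a KKT pair is indeed unproblematic since the constraints are linear and feasible, and your handling of the nonsmooth case (fixing a subgradient from an existing KKT pair, then replacing only the multiplier) is the right way to avoid circularity.
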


The proof can be found in \cite[Theorem 2]{Lam-2017}. The following lemma is a corollary of the saddle point property.
\begin{lemma}\cite{Li-2016-ALADM}\label{lemma03}
If $f(\x)$ is convex and $(\x^*,\lambda^*)$ is a pair of KKT points of saddle point problem $\min_{\x}\max_{\lambda} f(\x)+\<\lambda,\A\x\>$, then we have $f(\x)-f(\x^*)+\<\lambda^*,\A\x\>\geq 0$ for all $\x$.
\end{lemma}

The following lemma bounds the consensus violation of $\|\Pi\x\|_F$ from $\|\U\x\|_F$.
\begin{lemma}\label{lemma04}
Assume that Assumption \ref{assumption_w} holds. Then, we have $\|\Pi\x\|_F\hspace*{-0.08cm}\leq\hspace*{-0.08cm}\frac{1}{\sqrt{1-\sigma_2(\W)}}\|\U\x\|_F$.
\end{lemma}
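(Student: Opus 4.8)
The plan is to compare the two operators $\Pi = \I - \frac{1}{m}\1\1^T$ and $\U = \sqrt{\I - \W}$ on their common invariant subspaces. By Assumption \ref{assumption_w}, $\W$ is symmetric with $\W\1 = \1$ and $\I \succeq \W \succeq \0$, so $\W$ has an orthonormal eigenbasis $v_1 = \frac{1}{\sqrt m}\1, v_2, \ldots, v_m$ with eigenvalues $1 = \sigma_1(\W) \geq \sigma_2(\W) \geq \cdots \geq \sigma_m(\W) \geq 0$. First I would observe that both $\Pi$ and $\U^2 = \I - \W$ are simultaneously diagonalized in this basis: $\Pi v_1 = 0$ and $\Pi v_i = v_i$ for $i \geq 2$, while $\U^2 v_1 = 0$ and $\U^2 v_i = (1 - \sigma_i(\W)) v_i$ for $i \geq 2$. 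Hence on the orthogonal complement of $\1$, we have the operator inequality $\U^2 = \I - \W \succeq (1 - \sigma_2(\W))\,\Pi$, since $1 - \sigma_i(\W) \geq 1 - \sigma_2(\W)$ for every $i \geq 2$ and both operators annihilate $v_1$.

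Next I would convert this into the stated Frobenius-norm bound. Writing $\x \in \R^{m\times n}$ and using $\|\U\x\|_F^2 = \trace(\x^T \U^2 \x)$ and $\|\Pi\x\|_F^2 = \trace(\x^T \Pi \x)$ (using $\Pi^2 = \Pi$, $\Pi^T = \Pi$), the operator inequality above, applied columnwise to $\x$, gives $\|\U\x\|_F^2 = \trace(\x^T \U^2 \x) \geq (1 - \sigma_2(\W))\,\trace(\x^T \Pi \x) = (1 - \sigma_2(\W))\,\|\Pi\x\|_F^2$. Taking square roots and rearranging yields $\|\Pi\x\|_F \leq \frac{1}{\sqrt{1 - \sigma_2(\W)}}\|\U\x\|_F$, which is exactly the claim. (Here one uses $\sigma_2(\W) < 1$ for a connected network so the right-hand side is finite.)

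There is no serious obstacle here; the only point requiring a little care is the bookkeeping that both operators vanish on the span of $\1$, so that the eigenvalue comparison $1 - \sigma_i \geq 1 - \sigma_2$ need only hold on the complement — one should not be misled into needing $1 - \sigma_1 = 0 \geq 1 - \sigma_2$, which is false. Restricting attention to $\mathrm{range}(\Pi) = \mathbf{1}^\perp$, on which $\Pi$ acts as the identity and $\U^2$ has smallest eigenvalue $1 - \sigma_2(\W)$, makes the argument clean. An alternative phrasing that avoids eigen-decompositions entirely: decompose $\x = \1 \alpha(\x)^T + \Pi\x$, note $\U(\1\alpha(\x)^T) = 0$ since $\W\1 = \1$, so $\U\x = \U(\Pi\x)$, and then $\|\U\x\|_F^2 = \|\U(\Pi\x)\|_F^2 \geq \lambda_{\min}(\U^2|_{\mathbf{1}^\perp})\,\|\Pi\x\|_F^2 = (1-\sigma_2(\W))\|\Pi\x\|_F^2$.
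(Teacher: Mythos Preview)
Your proof is correct and follows essentially the same approach as the paper: both arguments reduce to the observation that $\U$ and $\Pi$ share the null space $\mathrm{span}(\1)$, and on its orthogonal complement $\1^\perp$ the smallest eigenvalue of $\U^2=\I-\W$ is $1-\sigma_2(\W)$ while $\Pi$ acts as the identity. The paper phrases this via the economical SVD $\U=\V\Sigma\V^T$ and a columnwise computation, whereas you state it as the operator inequality $\U^2\succeq(1-\sigma_2(\W))\Pi$ (and your alternative phrasing at the end is almost verbatim the paper's route); the content is the same.
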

\begin{proof}
From Assumption \ref{assumption_w}, we know $\U\1=\0$, $\U=\U^T$, and $\mbox{rank}(\U)=m-1$. For any $\x\in\R^{m\times n}$, denote $\overline\x=\Pi\x=\x-\frac{1}{m}\1\1^T\x$. Since $\1^T\overline\x=\0$, we know $\overline\x$ is orthogonal to the null space of $\U$, and thus it belongs to the row (i.e., column) space of $\U$. Let $\V\Sigma\V^T=\U$ be its economical SVD with $\V\in\R^{m\times (m-1)}$. Then we have
\begin{eqnarray}
\begin{aligned}\notag
&\|\U\x\|_F^2=\|\U\overline\x\|_F^2=\sum_{i=1}^n\overline\x_i^T\U^2\overline\x_i=\sum_{i=1}^n(\V^T\overline\x_i)^T\Sigma^2(\V^T\overline\x_i)\\
&\geq (1-\sigma_2(\W))\sum_{i=1}^n\|\V^T\overline\x_i\|_F^2=(1-\sigma_2(\W))\|\V^T\overline\x\|_F^2\\
&\overset{a}=(1-\sigma_2(\W))\|\overline\x\|_F^2=(1-\sigma_2(\W))\|\Pi\x\|_F^2,
\end{aligned}
\end{eqnarray}
where we denote $\x_i$ to be the $i$th column of $\x$, and $\overset{a}=$ follows from the fact that $\overline\x$ belongs to the column space of $\U$, i.e., there exists $\alpha\in\R^{(m-1)\times n}$ such tht $\overline\x=\V\alpha$.
\hfill$\Box$\end{proof}

At last, we present the following lemma, which can be used to analyze the algorithms with inexact subproblem computation.
\begin{lemma}\cite{schnidt-2011-nips}\label{schnidt-lemma}
Assume that $(s_k)$ is a sequence with increasing scalars and $(v_k)$, $(\alpha_i)$ are sequences with nonnegative scalars, $v_0^2\leq s_0$. If $v_k^2\leq s_k+\sum_{i=1}^k\alpha_i v_i$, then we have $v_k\leq \frac{1}{2}\sum_{i=1}^k\alpha_i+\sqrt{\left(\frac{1}{2}\sum_{i=1}^k\alpha_i\right)^2+s_k}$.
\end{lemma}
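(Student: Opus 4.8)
The plan is to reduce the self-referential inequality $v_k^2\le s_k+\sum_{i=1}^k\alpha_i v_i$ to a single scalar quadratic inequality by passing to the running maximum of the sequence $(v_k)$. Set $A_k=\frac{1}{2}\sum_{i=1}^k\alpha_i$, so that the claimed bound reads $v_k\le A_k+\sqrt{A_k^2+s_k}$, and define $w_k=\max_{0\le j\le k}v_j$.

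First I would observe that for every index $j$ with $0\le j\le k$, the hypothesis together with the monotonicity of $(s_k)$ and the nonnegativity of the $\alpha_i$ and $v_i$ gives
\begin{eqnarray}
v_j^2 &\le& s_j+\sum_{i=1}^j\alpha_i v_i \le s_k+\sum_{i=1}^k\alpha_i v_i \notag\\
&\le& s_k+\Big(\sum_{i=1}^k\alpha_i\Big)w_k = s_k+2A_k w_k, \notag
\end{eqnarray}
where the case $j=0$ uses $v_0^2\le s_0\le s_k$. Since the right-hand side is independent of $j$, taking the maximum over $0\le j\le k$ yields $w_k^2\le s_k+2A_k w_k$.

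Next I would treat $w_k^2-2A_k w_k-s_k\le 0$ as a quadratic inequality in the nonnegative unknown $w_k$; its largest root is $A_k+\sqrt{A_k^2+s_k}$, hence $w_k\le A_k+\sqrt{A_k^2+s_k}$. Since $v_k\le w_k$ by definition of $w_k$, the conclusion follows.

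I do not anticipate a genuine obstacle here: the only point that requires care is the passage to the running maximum $w_k$, which is precisely what decouples the circular dependence of $v_k$ on itself inside the sum; everything afterward is the elementary estimate on the larger root of a quadratic. This statement is \cite[Lemma~1]{schnidt-2011-nips}, reproduced here for completeness.
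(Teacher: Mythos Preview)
Your proof is correct. The paper does not supply its own proof of this lemma; it is quoted verbatim from \cite{schnidt-2011-nips} and used as a black box, so there is nothing to compare against beyond the original source, and your argument via the running maximum $w_k=\max_{0\le j\le k}v_j$ is precisely the standard one given there.
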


\subsection{Complexity Analysis for Algorithm \ref{D-NC}}\label{section:dnc}

\subsubsection{Inner Loop}\label{section:dnc1}
Before proving the convergence of procedure (\ref{apg1})-(\ref{apg3}), we first establish the required precision to approximate $\alpha(\z^k)$ for an $\varepsilon_k$-accurate solution of the subproblem in (\ref{apg3}).
\begin{lemma}\label{lemma1}
Let $\z^{k,T_k}$ be obtained by (\ref{aac}) and $\x^{k+1}=\frac{L\vartheta_k\z^k+\beta_0\z^{k,T_k}}{L\vartheta_k+\beta_0}$. If $\|\z^{k,T_k}-\1\alpha(\z^k)^T\|_F^2\leq \frac{2\vartheta_k\varepsilon_k}{\beta_0}$, then we have
\begin{eqnarray}
\qquad\begin{aligned}\label{inexact_assumption}
&\frac{L}{2}\left\|\x^{k+1}-\z^k\right\|_F^2+\frac{\beta_0}{2\vartheta_k}\|\Pi\x^{k+1}\|_F^2\\
\leq&\min_{\x\in\R^{m\times n}} \frac{L}{2}\left\|\x-\z^k\right\|_F^2+\frac{\beta_0}{2\vartheta_k}\|\Pi\x\|_F^2+\varepsilon_k.
\end{aligned}
\end{eqnarray}
\end{lemma}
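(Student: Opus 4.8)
The plan is to exploit the closed-form solution of the \emph{exact} proximal subproblem (\ref{proximal_def}) and quantify how far $\x^{k+1}$ sits from it. Write $\x^*_k=\frac{L\vartheta_k\z^k+\beta_0\1\alpha(\z^k)^T}{L\vartheta_k+\beta_0}$ for the exact minimizer of $g(\x):=\frac{L}{2}\|\x-\z^k\|_F^2+\frac{\beta_0}{2\vartheta_k}\|\Pi\x\|_F^2$, which was already identified in the text right after (\ref{proximal_def}); note $\Pi\x^*_k=\frac{L\vartheta_k}{L\vartheta_k+\beta_0}\Pi\z^k$, since $\Pi\1=\0$. The function $g$ is $L$-strongly convex in $\x$ (its Hessian is $L\I+\frac{\beta_0}{\vartheta_k}\Pi\otimes\I\succeq L\I$), so the standard quadratic-growth identity for a strongly convex quadratic minimized at $\x^*_k$ gives
\begin{equation}\notag
g(\x^{k+1}) = g(\x^*_k) + \frac{L}{2}\bigl\|\x^{k+1}-\x^*_k\bigr\|_F^2 + \frac{\beta_0}{2\vartheta_k}\bigl\|\Pi(\x^{k+1}-\x^*_k)\bigr\|_F^2,
\end{equation}
because the first-order (cross) term vanishes at the minimizer. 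Since $g(\x^*_k)=\min_{\x}g(\x)$, it therefore suffices to bound the two quadratic error terms on the right by $\varepsilon_k$.

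The second step is the elementary estimate $\|\x^{k+1}-\x^*_k\|_F$. By definition $\x^{k+1}-\x^*_k=\frac{\beta_0}{L\vartheta_k+\beta_0}\bigl(\z^{k,T_k}-\1\alpha(\z^k)^T\bigr)$, so
\begin{equation}\notag
\frac{L}{2}\bigl\|\x^{k+1}-\x^*_k\bigr\|_F^2 = \frac{L\beta_0^2}{2(L\vartheta_k+\beta_0)^2}\bigl\|\z^{k,T_k}-\1\alpha(\z^k)^T\bigr\|_F^2 \le \frac{L\beta_0^2}{2(L\vartheta_k+\beta_0)^2}\cdot\frac{2\vartheta_k\varepsilon_k}{\beta_0} = \frac{L\vartheta_k\beta_0}{(L\vartheta_k+\beta_0)^2}\,\varepsilon_k.
\end{equation}
For the third term, using $\|\Pi\|_2\le 1$ (since $\Pi$ is an orthogonal projector) we get $\|\Pi(\x^{k+1}-\x^*_k)\|_F\le\|\x^{k+1}-\x^*_k\|_F$, hence
\begin{equation}\notag
\frac{\beta_0}{2\vartheta_k}\bigl\|\Pi(\x^{k+1}-\x^*_k)\bigr\|_F^2 \le \frac{\beta_0}{2\vartheta_k}\cdot\frac{\beta_0^2}{(L\vartheta_k+\beta_0)^2}\cdot\frac{2\vartheta_k\varepsilon_k}{\beta_0} = \frac{\beta_0^2}{(L\vartheta_k+\beta_0)^2}\,\varepsilon_k.
\end{equation}
Adding the two bounds, the total error is $\frac{\beta_0(L\vartheta_k+\beta_0)}{(L\vartheta_k+\beta_0)^2}\varepsilon_k=\frac{\beta_0}{L\vartheta_k+\beta_0}\varepsilon_k\le\varepsilon_k$, which is exactly (\ref{inexact_assumption}).

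I do not anticipate a serious obstacle here; the only point requiring a little care is verifying that the cross term in the strong-convexity identity genuinely vanishes, i.e. that $\nabla g(\x^*_k)=0$ and thus $\langle\nabla g(\x^*_k),\x^{k+1}-\x^*_k\rangle=0$ — this is immediate since $\x^*_k$ is the unconstrained minimizer of the smooth convex quadratic $g$. One should also double-check the convention in the statement: the hypothesis is written with $\1\alpha(\z^k)^T$ (a matrix in $\R^{m\times n}$), matching the aggregate-variable notation, so $\alpha(\z^k)^T$ is a row vector and $\1\alpha(\z^k)^T$ has every row equal to the average — consistent with $\Pi\,\1\alpha(\z^k)^T=\0$. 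With that bookkeeping in place the computation above closes the proof.
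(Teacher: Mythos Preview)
Your proposal is correct and follows essentially the same route as the paper: identify the exact minimizer $\x^{k,*}$, use the exact second-order expansion of the quadratic $g$ around $\x^{k,*}$ (the paper obtains this via Lemma~\ref{lemma01} and the optimality condition, you via the vanishing gradient at the minimizer), substitute $\x^{k+1}-\x^{k,*}=\frac{\beta_0}{L\vartheta_k+\beta_0}(\z^{k,T_k}-\1\alpha(\z^k)^T)$, and bound the residual using $\|\Pi\|_2\le 1$ and the hypothesis. Your final estimate $\frac{\beta_0}{L\vartheta_k+\beta_0}\varepsilon_k$ is even slightly sharper than the paper's intermediate bound before both relax to $\varepsilon_k$.
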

\begin{proof}
Define $\x^{k,*}=\argmin_{\x}\frac{L}{2}\|\x-\z^k\|_F^2+\frac{\beta_0}{2\vartheta_k}\|\Pi\x\|_F^2$, $\widetilde\x^{k,*}=\frac{1}{m}\1\1^T\x^{k,*}$, and $\widetilde\z^k=\frac{1}{m}\1\1^T\z^k$. From the optimality condition of $\x^{k,*}$, we have
\begin{eqnarray}
\0=L(\x^{k,*}-\z^k)+\frac{\beta_0}{\vartheta_k}\Pi^2\x^{k,*}.\label{cont1}
\end{eqnarray}
From $\Pi=\Pi^2$ and its definition, we have $\0=L(\x^{k,*}-\z^k)+\frac{\beta_0}{\vartheta_k}(\x^{k,*}-\widetilde\x^{k,*})$, which leads to $\x^{k,*}=\frac{L\vartheta_k\z^k+\beta_0\widetilde\x^{k,*}}{L\vartheta_k+\beta_0}$. Multiplying both sides of (\ref{cont1}) by $\frac{1}{m}\1\1^T$, and using $\1^T\Pi=\0$, we have $\widetilde\x^{k,*}=\widetilde\z^k$, which further gives $\x^{k,*}=\frac{L\vartheta_k\z^k+\beta_0\widetilde\z^k}{L\vartheta_k+\beta_0}$. On the other hand, we have
\begin{eqnarray}
\hspace*{-0.7cm}\begin{aligned}\label{cont4}
& \frac{L}{2}\left\|\x^{k+1}-\z^k\right\|_F^2+\frac{\beta_0}{2\vartheta_k}\|\Pi\x^{k+1}\|_F^2\\
&-\frac{L}{2}\left\|\x^{k,*}-\z^k\right\|_F^2-\frac{\beta_0}{2\vartheta_k}\|\Pi\x^{k,*}\|_F^2\\
\overset{a}=&L\<\x^{k,*}-\z^k,\x^{k+1}-\x^{k,*}\>+\frac{L}{2}\|\x^{k+1}-\x^{k,*}\|_F^2\\
&+\hspace*{-0.05cm}\frac{\beta_0}{\vartheta_k}\hspace*{-0.05cm}\<\Pi^2\x^{k,*},\x^{k+1}\hspace*{-0.05cm}-\hspace*{-0.05cm}\x^{k,*}\>\hspace*{-0.05cm}+\hspace*{-0.05cm}\frac{\beta_0}{2\vartheta_k}\|\Pi(\x^{k+1}\hspace*{-0.05cm}-\hspace*{-0.05cm}\x^{k,*})\|_F^2\hspace*{-1cm}\\
\overset{b}=&\frac{L}{2}\|\x^{k+1}-\x^{k,*}\|_F^2+\frac{\beta_0}{2\vartheta_k}\|\Pi(\x^{k+1}-\x^{k,*})\|_F^2\\
\overset{c}=&\frac{\beta_0^2}{(L\vartheta_k\hspace*{-0.05cm}+\hspace*{-0.05cm}\beta_0)^2}\hspace*{-0.05cm}\left( \hspace*{-0.05cm}\frac{L}{2}\|\z^{k,T_k}\hspace*{-0.05cm}-\hspace*{-0.05cm}\widetilde\z^k\|_F^2\hspace*{-0.05cm}+\hspace*{-0.05cm}\frac{\beta_0}{2\vartheta_k}\|\Pi(\z^{k,T_k}\hspace*{-0.05cm}-\hspace*{-0.05cm}\widetilde\z^k)\|_F^2 \hspace*{-0.05cm}\right)\hspace*{-2cm}\\
\overset{d}\leq&\frac{\beta_0^2}{2\vartheta_k(L\vartheta_k+\beta_0)}\|\z^{k,T_k}\hspace*{-0.05cm}-\hspace*{-0.05cm}\widetilde\z^k\|_F^2\leq\frac{\beta_0}{2\vartheta_k}\|\z^{k,T_k}\hspace*{-0.05cm}-\hspace*{-0.05cm}\widetilde\z^k\|_F^2.\hspace*{-2cm}
\end{aligned}
\end{eqnarray}
where we use Lemma \ref{lemma01} in $\overset{a}=$, (\ref{cont1}) in $\overset{b}=$, the definition of $\x^{k+1}$ and $\x^{k,*}=\frac{L\vartheta_k\z^k+\beta_0\widetilde\z^k}{L\vartheta_k+\beta_0}$ in $\overset{c}=$, and $\|\Pi\z\|_F\leq\|\z\|_F$ in $\overset{d}\leq$.
\hfill$\Box$\end{proof}

Now we consider the iteration number of the accelerated average consensus in (\ref{aac}) to solve the subproblem in (\ref{apg3}) such that (\ref{inexact_assumption}) is satisfied.  From \cite[Proposition 3]{liu-2011}, we have
\begin{eqnarray}
\hspace*{-0.7cm}&&\|\z^{k,T_k}-\1\alpha(\z^k)^T\|_F\leq \left(\frac{\sigma_2(\W)}{1+\sqrt{1-\sigma_2^2(\W)}}\right)^{T_k}\hspace*{-0.1cm}\|\Pi\z^k\|_F\label{consens_il}\\
\hspace*{-0.7cm}&&\leq \hspace*{-0.1cm}\left(\hspace*{-0.1cm}\frac{\sigma_2(\W)}{1\hspace*{-0.08cm}+\hspace*{-0.08cm}\sqrt{1\hspace*{-0.08cm}-\hspace*{-0.08cm}\sigma_2(\W)}}\hspace*{-0.08cm}\right)^{T_k}\hspace*{-0.15cm}\|\Pi\z^k\|_F\hspace*{-0.08cm}=\hspace*{-0.08cm}\left(\hspace*{-0.08cm}1\hspace*{-0.08cm}-\hspace*{-0.08cm}\sqrt{1\hspace*{-0.08cm}-\hspace*{-0.08cm}\sigma_2(\W)}\hspace*{-0.02cm}\right)^{T_k}\hspace*{-0.12cm}\|\Pi\z^k\|_F\hspace*{-0.04cm}.\notag
\end{eqnarray}
Thus, from Lemma \ref{lemma1}, we only need
\begin{equation}
T_k=\frac{1}{-2\log\left(1-\sqrt{1-\sigma_2(\W)}\right)}\log \frac{\beta_0\|\Pi\z^k\|_F^2}{2\vartheta_k\varepsilon_k}\label{tk}
\end{equation}
such that
(\ref{inexact_assumption}) is satisfied.

At last, we study the property when the proximal mapping in (\ref{apg3}) is inexactly computed. When it is computed exactly, i.e., $\varepsilon_k=0$ in (\ref{inexact_assumption}), we have $L(\x^{k+1}-\z^k)+\frac{\beta_0}{\vartheta_k}\Pi^2\x^{k+1}=0$. However, when it is computed inexactly, we should modify the conclusion accordingly. Specifically, we give the following lemma.
\begin{lemma}\label{precision_lemma}
Assume that (\ref{inexact_assumption}) holds. Then, there exists $\delta^k$ with $\|\delta^k\|_F\leq\sqrt{\frac{2\varepsilon_k}{L}}$ and $\frac{\beta_0}{\vartheta_k}\|\Pi\delta^k\|_F^2\leq2\varepsilon_k$ such that
\begin{eqnarray}
\begin{aligned}\label{lemma_sub}
L(\x^{k+1}-\z^k+\delta^k)+\frac{\beta_0}{\vartheta_k}\Pi^2(\x^{k+1}+\delta^k)=0.
\end{aligned}
\end{eqnarray}
\end{lemma}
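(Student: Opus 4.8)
The plan is to exploit the fact that the subproblem objective in (\ref{apg3}), namely $\phi(\x) := \frac{L}{2}\|\x-\z^k\|_F^2 + \frac{\beta_0}{2\vartheta_k}\|\Pi\x\|_F^2$, is a strongly convex quadratic whose exact minimizer $\x^{k,*}$ has already been characterized in the proof of Lemma~\ref{lemma1} via the optimality condition (\ref{cont1}), i.e. $\bzero = L(\x^{k,*}-\z^k) + \frac{\beta_0}{\vartheta_k}\Pi^2\x^{k,*}$. I would simply set $\delta^k := \x^{k,*} - \x^{k+1}$, so that $\x^{k+1}+\delta^k = \x^{k,*}$ and the desired identity (\ref{lemma_sub}) becomes (\ref{cont1}) verbatim. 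Everything then reduces to verifying the two size bounds on $\delta^k$.

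First I would reuse the algebra already written out in (\ref{cont4}): the chain through $\overset{a}{=}$ and $\overset{b}{=}$ there uses only Lemma~\ref{lemma01} (completing the square) and the optimality relation (\ref{cont1}), hence it holds for an \emph{arbitrary} $\x^{k+1}$, not merely the one produced by the consensus inner loop. It yields the exact identity
\[
\phi(\x^{k+1}) - \phi(\x^{k,*}) = \frac{L}{2}\|\x^{k+1}-\x^{k,*}\|_F^2 + \frac{\beta_0}{2\vartheta_k}\|\Pi(\x^{k+1}-\x^{k,*})\|_F^2 .
\]
Now plugging in the inexactness hypothesis (\ref{inexact_assumption}), which says precisely that $\phi(\x^{k+1}) - \phi(\x^{k,*}) \le \varepsilon_k$, gives
\[
\frac{L}{2}\|\delta^k\|_F^2 + \frac{\beta_0}{2\vartheta_k}\|\Pi\delta^k\|_F^2 \le \varepsilon_k .
\]
Discarding either of the two nonnegative terms yields $\|\delta^k\|_F \le \sqrt{2\varepsilon_k/L}$ and $\frac{\beta_0}{\vartheta_k}\|\Pi\delta^k\|_F^2 \le 2\varepsilon_k$, which are exactly the claimed bounds; and substituting $\x^{k+1} = \x^{k,*}-\delta^k$ into the left-hand side of (\ref{lemma_sub}) turns it into $L(\x^{k,*}-\z^k) + \frac{\beta_0}{\vartheta_k}\Pi^2\x^{k,*} = \bzero$ by (\ref{cont1}).

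There is no genuinely hard step here; the only point requiring care is observing that the quadratic identity above is a generic property of $\phi$ around its minimizer and does not depend on how $\x^{k+1}$ was constructed, so that Lemma~\ref{precision_lemma} is a clean consequence of (\ref{inexact_assumption}) alone. I would also double-check the constants: $L$-strong convexity of $\phi$ already delivers the first bound, but it is the sharper weighted identity that additionally produces the $\Pi$-weighted estimate $\frac{\beta_0}{\vartheta_k}\|\Pi\delta^k\|_F^2 \le 2\varepsilon_k$, which is the one that matters when $\beta_0/\vartheta_k$ is large in later iterations.
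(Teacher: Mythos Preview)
Your proposal is correct and is essentially identical to the paper's own proof: the paper also sets $\delta^k=\x^{k,*}-\x^{k+1}$, invokes the identity at step $\overset{b}{=}$ of (\ref{cont4}) together with (\ref{inexact_assumption}) to obtain the two size bounds, and then reads off (\ref{lemma_sub}) from (\ref{cont1}). Your additional remark that steps $\overset{a}{=}$ and $\overset{b}{=}$ in (\ref{cont4}) do not use the specific consensus-based form of $\x^{k+1}$ is correct and makes the logical dependence of Lemma~\ref{precision_lemma} on (\ref{inexact_assumption}) alone more explicit.
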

\begin{proof}
Define $\delta^k=\x^{k,*}-\x^{k+1}$. From (\ref{inexact_assumption}) and equation $\overset{b}=$ in (\ref{cont4}), we have $\|\delta^k\|_F\leq\sqrt{\frac{2\varepsilon_k}{L}}$ and $\frac{\beta_0}{\vartheta_k}\|\Pi\delta^k\|_F^2\leq2\varepsilon_k$. From (\ref{cont1}) and the definition of $\delta^k$, we have (\ref{lemma_sub}).
\hfill$\Box$\end{proof}

\subsubsection{Outer Loop}
Now we are ready to analyze procedure (\ref{apg1})-(\ref{apg3}). Define
\begin{equation}
\w^{k+1}\equiv\frac{\x^{k+1}}{\theta_k}-\frac{1-\theta_k}{\theta_k}\x^k\mbox{ for any } k\geq 0\mbox{ and }\w^0=\x^0.\notag
\end{equation}
From the definition of $\y^k$ in (\ref{apg1}), we can give the following easy-to-identify identities.
\begin{lemma}\label{lemma08}
For procedure (\ref{apg1})-(\ref{apg3}), we have
\begin{eqnarray}
\begin{aligned}\notag
&\x^*+\frac{(1-\theta_k)L}{L\theta_k-\mu}\x^k-\frac{L-\mu}{L\theta_k-\mu}\y^k=\x^*-\w^k,\\
&\theta_k\x^*+(1-\theta_k)\x^k-\x ^{k+1}=\theta_k\left(\x^*-\w^{k+1}\right).
\end{aligned}
\end{eqnarray}
\end{lemma}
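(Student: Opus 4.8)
The plan is to prove both displayed identities by plain algebra, unwinding the definitions of $\y^k$ in \ref{apg1} and of $\w^k,\w^{k+1}$; no analytic estimates are involved, so this is really a bookkeeping lemma.

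\textbf{Second identity.} This one is immediate from the definition of $\w^{k+1}$. Since $\w^{k+1}=\frac{1}{\theta_k}\x^{k+1}-\frac{1-\theta_k}{\theta_k}\x^k$, multiplying by $\theta_k$ gives $\theta_k\w^{k+1}=\x^{k+1}-(1-\theta_k)\x^k$, hence $\theta_k(\x^*-\w^{k+1})=\theta_k\x^*-\x^{k+1}+(1-\theta_k)\x^k$, which is exactly the left-hand side $\theta_k\x^*+(1-\theta_k)\x^k-\x^{k+1}$. Nothing further is needed.

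\textbf{First identity.} Subtracting $\x^*$ from both sides, it suffices to show $\frac{(1-\theta_k)L}{L\theta_k-\mu}\x^k-\frac{L-\mu}{L\theta_k-\mu}\y^k=-\w^k$. I would substitute $\y^k=\x^k+\frac{L\theta_k-\mu}{L-\mu}\frac{1-\theta_{k-1}}{\theta_{k-1}}(\x^k-\x^{k-1})$ from \ref{apg1}, so that $\frac{L-\mu}{L\theta_k-\mu}\y^k=\frac{L-\mu}{L\theta_k-\mu}\x^k+\frac{1-\theta_{k-1}}{\theta_{k-1}}(\x^k-\x^{k-1})$. The coefficient of $\x^k$ on the left then collapses, since $(1-\theta_k)L-(L-\mu)=\mu-\theta_k L=-(L\theta_k-\mu)$, cancelling the denominator and leaving $-\x^k$. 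Thus the left-hand side equals $-\x^k-\frac{1-\theta_{k-1}}{\theta_{k-1}}(\x^k-\x^{k-1})=-\frac{1}{\theta_{k-1}}\x^k+\frac{1-\theta_{k-1}}{\theta_{k-1}}\x^{k-1}$, which is precisely $-\w^k$ upon replacing $k+1$ by $k$ in the definition of $\w$.

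\textbf{Base case and remarks.} For $k=0$ I would note $\x^0=\x^{-1}$, so the extrapolation term in \ref{apg1} vanishes, $\y^0=\x^0$, and with $\w^0=\x^0$ the first identity reduces to $\frac{(1-\theta_0)L}{L\theta_0-\mu}\x^0-\frac{L-\mu}{L\theta_0-\mu}\x^0=-\x^0$, which again follows from $(1-\theta_0)L-(L-\mu)=-(L\theta_0-\mu)$. There is no genuine obstacle here; the only thing to watch is carrying the factors $\frac{L\theta_k-\mu}{L-\mu}$ and $\frac{1-\theta_{k-1}}{\theta_{k-1}}$ through the substitution without a sign slip, and recognizing that the second identity is nothing more than a rearrangement of the definition of $\w^{k+1}$.
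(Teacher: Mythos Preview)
Your proof is correct and matches the paper's treatment: the paper does not spell out a proof at all, merely calling these ``easy-to-identify identities'' following from the definitions of $\y^k$ and $\w^{k+1}$, which is exactly the direct algebraic verification you carry out. The substitution of \eqref{apg1} and the cancellation $(1-\theta_k)L-(L-\mu)=-(L\theta_k-\mu)$ are the right moves, and your handling of the $k=0$ case is fine.
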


Let $(\x^*,\lambda^*)$ be a pair of KKT points of saddle point problem $\min_{\x}\max_{\lambda} f(\x)+\<\lambda,\Pi\x\>$ satisfying Lemma \ref{lemma02}. Define
\begin{equation}
\rho_{k+1}=f(\x^{k+1})-f(\x^*)+\<\lambda^*,\Pi\x^{k+1}\>.\label{def_rho}
\end{equation}
From Lemma \ref{lemma03}, we know $\rho_{k+1}\geq 0$.

We first give the following lemma, which describes a progress in one iteration of procedure (\ref{apg1})-(\ref{apg3}).
\begin{lemma}\label{lemma0}
Assume that Assumption \ref{assumption_f} holds with $\mu\geq 0$. Let sequences $\{\theta_k\}$ and $\{\vartheta_k\}$ satisfy $\frac{1-\theta_k}{\vartheta_k}=\frac{1}{\vartheta_{k-1}}$ and $\theta_k\geq\frac{\mu}{L}$. Then, under the assumption of (\ref{inexact_assumption}), we have
\begin{eqnarray}
\begin{aligned}\label{lemma_nc}
&\rho_{k+1}+\frac{\vartheta_k}{2\beta_0}\left\|\frac{\beta_0}{\vartheta_k}\Pi\x^{k+1}-\lambda^*\right\|_F^2+\frac{L\theta_k^2}{2}\|\w^{k+1}-\x^*\|_F^2\\
\leq& (1-\theta_k)\rho_k+\frac{\vartheta_k}{2\beta_0}\left\|\frac{\beta_0}{\vartheta_{k-1}}\Pi\x^k-\lambda^*\right\|_F^2+\varepsilon_k\hspace*{-1cm}\\
&+\frac{(L\theta_k-\mu)\theta_k}{2}\left\|\w^k-\x^*\right\|_F^2+L\theta_k\sqrt{\frac{2\varepsilon_k}{L}}\|\w^{k+1}-\x^*\|_F.\hspace*{-2cm}
\end{aligned}
\end{eqnarray}
\end{lemma}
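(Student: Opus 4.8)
The plan is to establish \eqref{lemma_nc} by running one step of the standard estimate-sequence / inexact APG argument on the penalized objective $\Phi_k(\x)=f(\x)+\frac{\beta_0}{2\vartheta_k}\|\Pi\x\|_F^2$, but tracking the consensus term through the saddle-point functional $\rho_{k+1}$ rather than through $\Phi_k$ itself (so that the increasing penalty does not spoil the recursion). First I would use $L$-smoothness of $f$ at $\y^k$ together with \eqref{apg2} to write the usual descent inequality $f(\x)\ge f(\z^k)+\langle L(\y^k-\z^k),\x-\z^k\rangle+\frac{L}{2}\|\x-\z^k\|_F^2+\frac{\mu}{2}\|\x-\y^k\|_F^2$ for all $\x$ coming from $\mu$-strong convexity; the $\frac{L}{2}\|\cdot\|_F^2$ and $\frac{\mu}{2}\|\cdot\|_F^2$ terms will later be converted, via Lemma~\ref{lemma08} and the identities in Lemma~\ref{lemma01}, into the $\frac{L\theta_k^2}{2}\|\w^{k+1}-\x^*\|_F^2$ and $\frac{(L\theta_k-\mu)\theta_k}{2}\|\w^k-\x^*\|_F^2$ terms.

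Next I would bring in the inexact optimality condition from Lemma~\ref{precision_lemma}, namely $L(\x^{k+1}-\z^k+\delta^k)+\frac{\beta_0}{\vartheta_k}\Pi^2(\x^{k+1}+\delta^k)=0$ with $\|\delta^k\|_F\le\sqrt{2\varepsilon_k/L}$ and $\frac{\beta_0}{\vartheta_k}\|\Pi\delta^k\|_F^2\le 2\varepsilon_k$. Evaluating the smoothness inequality at the convex combination $\x=\theta_k\x^*+(1-\theta_k)\x^k$, substituting $L(\y^k-\z^k)$ via this optimality condition, and using the second identity of Lemma~\ref{lemma08} ($\theta_k\x^*+(1-\theta_k)\x^k-\x^{k+1}=\theta_k(\x^*-\w^{k+1})$), the linear term splits into a piece involving $\frac{\beta_0}{\vartheta_k}\Pi\x^{k+1}$ paired against $\Pi(\x^{k+1}-\x)$ and an error piece carrying $\delta^k$. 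The penalty piece is handled by adding and subtracting $\lambda^*$: I would write $\langle\frac{\beta_0}{\vartheta_k}\Pi\x^{k+1},\Pi\x^{k+1}-\theta_k\Pi\x^*-(1-\theta_k)\Pi\x^k\rangle$, use $\Pi\x^*=\bzero$ and the cosine identity (Lemma~\ref{lemma01}) to turn $\langle\frac{\beta_0}{\vartheta_k}\Pi\x^{k+1}-\lambda^*,\frac{\beta_0}{\vartheta_k}\Pi\x^{k+1}-\frac{\beta_0}{\vartheta_{k-1}}\Pi\x^k\rangle$ (here the relation $\frac{1-\theta_k}{\vartheta_k}=\frac{1}{\vartheta_{k-1}}$ is exactly what makes $(1-\theta_k)\frac{\beta_0}{\vartheta_k}=\frac{\beta_0}{\vartheta_{k-1}}$) into the difference of the two squared-norm potentials $\frac{\vartheta_k}{2\beta_0}\|\frac{\beta_0}{\vartheta_k}\Pi\x^{k+1}-\lambda^*\|_F^2$ and $\frac{\vartheta_k}{2\beta_0}\|\frac{\beta_0}{\vartheta_{k-1}}\Pi\x^k-\lambda^*\|_F^2$, plus a leftover $\langle\lambda^*,\Pi\x^{k+1}\rangle$-type term that recombines with $f(\x^{k+1})-f(\x^*)$ and $(1-\theta_k)(f(\x^k)-f(\x^*))$ to form $\rho_{k+1}$ and $(1-\theta_k)\rho_k$ respectively, after dropping the nonnegative $\langle\lambda^*,\Pi\x^*\rangle$ via Lemma~\ref{lemma03}.

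Finally I would collect the $\delta^k$-dependent residuals: the $\frac{L}{2}\|\x^{k+1}-\z^k\|_F^2$ term absorbs a $-\frac{L}{2}\|\delta^k\|_F^2$ and produces cross terms $L\langle\delta^k,\x^{k+1}-\z^k\rangle$ and $\frac{\beta_0}{\vartheta_k}\langle\Pi\delta^k,\cdot\rangle$; using $\|\delta^k\|_F\le\sqrt{2\varepsilon_k/L}$, $\frac{\beta_0}{\vartheta_k}\|\Pi\delta^k\|_F^2\le 2\varepsilon_k$, Cauchy--Schwarz, and Young's inequality, all of these are bounded by $\varepsilon_k+L\theta_k\sqrt{2\varepsilon_k/L}\,\|\w^{k+1}-\x^*\|_F$ (the linear-in-$\|\w^{k+1}-\x^*\|_F$ term is kept unsquared on purpose, to be handled later via Lemma~\ref{schnidt-lemma}). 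Multiplying through by $\theta_k$ where needed and rearranging yields \eqref{lemma_nc}. The main obstacle I anticipate is the bookkeeping around the penalty/multiplier term: one must choose exactly which $\vartheta$ indexes which iterate (the asymmetry $\frac{\beta_0}{\vartheta_k}\Pi\x^{k+1}$ versus $\frac{\beta_0}{\vartheta_{k-1}}\Pi\x^k$ inside the two potentials) so that the telescoping works under $\frac{1-\theta_k}{\vartheta_k}=\frac{1}{\vartheta_{k-1}}$, and simultaneously make sure the inexactness $\delta^k$ enters both the $f$-smoothness term and the $\Pi^2$ term consistently — a sign error or a misplaced $\theta_k$ there breaks the whole estimate sequence.
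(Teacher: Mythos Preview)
Your outline follows the paper's proof closely: both start from the smoothness/strong-convexity bound at $\y^k$, substitute the inexact optimality condition of Lemma~\ref{precision_lemma}, take the convex combination $\theta_k\x^*+(1-\theta_k)\x^k$, use $\frac{1-\theta_k}{\vartheta_k}=\frac{1}{\vartheta_{k-1}}$ together with the three-point identities of Lemma~\ref{lemma01} to telescope the penalty potential, and finally bound the $\delta^k$ residuals by $\varepsilon_k$ and $L\theta_k\sqrt{2\varepsilon_k/L}\,\|\w^{k+1}-\x^*\|_F$. So the strategy is right.

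Two concrete issues, however. First, the descent inequality you write, $f(\x)\ge f(\z^k)+\langle L(\y^k-\z^k),\x-\z^k\rangle+\tfrac{L}{2}\|\x-\z^k\|_F^2+\tfrac{\mu}{2}\|\x-\y^k\|_F^2$, is false as stated (set $\x=\z^k$). What you need is the paper's version: upper-bound $f(\x^{k+1})$ by smoothness at $\y^k$, then lower-bound $f(\y^k)+\langle\nabla f(\y^k),\x-\y^k\rangle$ by $f(\x)-\tfrac{\mu}{2}\|\x-\y^k\|_F^2$; the resulting inequality is \eqref{cont5}, and the $\tfrac{L}{2}$ quadratic sits on $\|\x^{k+1}-\y^k\|_F^2$, not on $\|\x-\z^k\|_F^2$.

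Second, the passage from the raw quadratic $\tfrac{L}{2}\|(1-\theta_k)\x^k+\theta_k\x^*-\y^k\|_F^2-\tfrac{\mu\theta_k}{2}\|\x^*-\y^k\|_F^2$ to $\tfrac{(L\theta_k-\mu)\theta_k}{2}\|\w^k-\x^*\|_F^2$ is not a direct consequence of Lemma~\ref{lemma08} and Lemma~\ref{lemma01}. The paper inserts an extra step: it rewrites $\tfrac{\y^k}{\theta_k}-\tfrac{1-\theta_k}{\theta_k}\x^k-\x^*$ as the convex combination $\tfrac{\mu}{L\theta_k}(\y^k-\x^*)+\bigl(1-\tfrac{\mu}{L\theta_k}\bigr)(\x^*-\w^k)$ (this is where the hypothesis $\theta_k\ge\mu/L$ is used), applies Jensen to $\|\cdot\|_F^2$, and only then does the first identity of Lemma~\ref{lemma08} kick in. Without this splitting you will not get the factor $(L\theta_k-\mu)$; evaluating directly at the convex combination gives $-\tfrac{\mu}{2}\|\theta_k\x^*+(1-\theta_k)\x^k-\y^k\|_F^2$, which for $\mu>0$ is \emph{not} $-\tfrac{\mu\theta_k}{2}\|\x^*-\y^k\|_F^2$ and does not cancel cleanly. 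The paper avoids this by applying the basic inequality separately at $\x=\x^k$ and $\x=\x^*$ and then combining, which yields the needed $-\tfrac{\mu\theta_k}{2}\|\x^*-\y^k\|_F^2$ term.
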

\begin{proof}
From the smoothness and convexity of $f(\x)$, we have
\begin{eqnarray}
\begin{aligned}\label{cont5}
&f(\x^{k+1})\\
\leq &f(\y^k)+\<\nabla f(\y^k),\x^{k+1}-\y^k\>+\frac{L}{2}\|\x^{k+1}-\y^k\|_F^2\\
= &f\hspace*{-0.02cm}(\hspace*{-0.02cm}\y^k\hspace*{-0.02cm})\hspace*{-0.09cm}+\hspace*{-0.09cm}\<\hspace*{-0.04cm}\nabla \hspace*{-0.02cm}f\hspace*{-0.02cm}(\hspace*{-0.02cm}\y^k)\hspace*{-0.02cm}\hspace*{-0.02cm},\hspace*{-0.02cm}\x\hspace*{-0.09cm}-\hspace*{-0.09cm}\y^k\hspace*{-0.04cm}\>\hspace*{-0.09cm}+\hspace*{-0.09cm}\<\hspace*{-0.04cm}\nabla \hspace*{-0.02cm}f\hspace*{-0.02cm}(\hspace*{-0.02cm}\y^k\hspace*{-0.02cm})\hspace*{-0.02cm},\hspace*{-0.02cm}\x^{k+1}\hspace*{-0.09cm}-\hspace*{-0.09cm}\x\hspace*{-0.04cm}\>\hspace*{-0.09cm}+\hspace*{-0.09cm}\frac{L}{2}\hspace*{-0.02cm}\|\hspace*{-0.02cm}\x^{k+1}\hspace*{-0.09cm}-\hspace*{-0.09cm}\y^k\hspace*{-0.02cm}\|_F^2\hspace*{-0.2cm}\hspace*{-0.4cm}\\
\leq &f(\x)\hspace*{-0.09cm}-\hspace*{-0.05cm}\frac{\mu}{2}\|\x\hspace*{-0.05cm}-\hspace*{-0.05cm}\y^k\|_F^2\hspace*{-0.05cm}+\hspace*{-0.05cm}\<\nabla f(\y^k),\x^{k+1}\hspace*{-0.05cm}-\hspace*{-0.05cm}\x\>\hspace*{-0.05cm}+\hspace*{-0.05cm}\frac{L}{2}\|\x^{k+1}\hspace*{-0.05cm}-\hspace*{-0.05cm}\y^k\|_F^2.\hspace*{-0.4cm}
\end{aligned}
\end{eqnarray}
Plugging $\z^k=\y^k-\frac{1}{L}\nabla f(\y^k)$ and (\ref{lemma_sub}) into the above inequality, we have
\begin{eqnarray}
\begin{aligned}\label{cont6}
&f(\x^{k+1})-f(\x)\notag\\
\leq&\frac{\beta_0}{\vartheta_k}\<\Pi\x^{k+1}+\Pi\delta^k,\Pi\x-\Pi\x^{k+1}\>+L\<\x^{k+1}-\y^k,\x-\y^k\>\\
&+L\<\delta^k,\x-\x^{k+1}\>-\frac{\mu}{2}\|\x-\y^k\|_F^2-\frac{L}{2}\|\x^{k+1}-\y^k\|_F^2
\end{aligned}
\end{eqnarray}
When we apply (\ref{cont6}) first with $\x=\x^k$ and then with $\x=\x^*$, we obtain two inequalities. Multiplying the first inequality by $(1-\theta_k)$, multiplying the second by $\theta_k$, adding them together with $\<\lambda^*,\Pi\x^{k+1}-(1-\theta_k)\Pi\x^k\>$ to both sides, and using $\Pi\x^*=\0$, we have
\begin{eqnarray}
\begin{aligned}\notag
&f(\x^{k+1})-(1-\theta_k)f(\x^k)-\theta_k f(\x^*)\\
&+\<\lambda^*,\Pi\x^{k+1}-(1-\theta_k)\Pi\x^k\>
\end{aligned}
\end{eqnarray}
\begin{eqnarray}
\begin{aligned}\notag
\leq&\<\frac{\beta_0}{\vartheta_k}(\Pi\x^{k+1}+\Pi\delta^k)-\lambda^*,(1-\theta_k)\Pi\x^k-\Pi\x^{k+1}\>\\
&+L\<\x^{k+1}-\y^k,(1-\theta_k)\x^k+\theta_k\x^*-\y^k\>\\
&+L\<\delta^k,(1-\theta_k)\x^k+\theta_k\x^*-\x^{k+1}\>\\
&-\frac{\mu\theta_k}{2}\|\x^*-\y^k\|_F^2-\frac{L}{2}\|\x^{k+1}-\y^k\|_F^2\\
\overset{a}=&\frac{\vartheta_k}{\beta_0}\<\frac{\beta_0}{\vartheta_k}(\Pi\x^{k+1}+\Pi\delta^k)-\lambda^*,\frac{\beta_0}{\vartheta_{k-1}}\Pi\x^k-\frac{\beta_0}{\vartheta_k}\Pi\x^{k+1}\>\\
&+L\<\x^{k+1}-\y^k,(1-\theta_k)\x^k+\theta_k\x^*-\y^k\>\\
&+L\<\delta^k,(1-\theta_k)\x^k+\theta_k\x^*-\x^{k+1}\>\\
&-\frac{\mu\theta_k}{2}\|\x^*-\y^k\|_F^2-\frac{L}{2}\|\x^{k+1}-\y^k\|_F^2,
\end{aligned}
\end{eqnarray}
where we use $\frac{1-\theta_k}{\vartheta_k}=\frac{1}{\vartheta_{k-1}}$ in $\overset{a}=$. Applying the identities in Lemma \ref{lemma01} to the two inner products, we have
\begin{eqnarray}
\begin{aligned}\notag
&\rho_{k+1}-(1-\theta_k)\rho_k\\
\leq&\frac{\vartheta_k}{2\beta_0}\hspace*{-0.12cm}\left[ \left\|\frac{\beta_0}{\vartheta_{k-1}}\Pi\x^k\hspace*{-0.08cm}-\hspace*{-0.08cm}\lambda^*\right\|_F^2\hspace*{-0.12cm}+\hspace*{-0.08cm}\left\|\frac{\beta_0}{\vartheta_k}\Pi\x^{k+1}\hspace*{-0.08cm}-\hspace*{-0.08cm}\frac{\beta_0}{\vartheta_k}\hspace*{-0.04cm}(\Pi\x^{k+1}\hspace*{-0.08cm}+\hspace*{-0.08cm}\Pi\delta^k)\right\|_F^2\right.\\
&\left.-\hspace*{-0.055cm}\left\|\frac{\beta_0}{\vartheta_k}\Pi\x^{k+1}\hspace*{-0.055cm}-\hspace*{-0.055cm}\lambda^*\right\|_F^2\hspace*{-0.15cm}-\hspace*{-0.055cm}\left\|\frac{\beta_0}{\vartheta_{k-1}}\Pi\x^k\hspace*{-0.055cm}-\hspace*{-0.055cm}\frac{\beta_0}{\vartheta_k}(\Pi\x^{k+1}\hspace*{-0.055cm}+\hspace*{-0.055cm}\Pi\delta^k)\right\|_F^2 \right]\\
&+\hspace*{-0.091cm}\frac{L}{2}\hspace*{-0.091cm}\left[ \|(\hspace*{-0.02cm}1\hspace*{-0.091cm}-\hspace*{-0.091cm}\theta_k\hspace*{-0.02cm})\x^k\hspace*{-0.091cm}+\hspace*{-0.091cm}\theta_k\x^*\hspace*{-0.091cm}-\hspace*{-0.091cm}\y^k\|_F^2\hspace*{-0.091cm}-\hspace*{-0.091cm}\|(\hspace*{-0.02cm}1\hspace*{-0.091cm}-\hspace*{-0.091cm}\theta_k\hspace*{-0.02cm})\x^k\hspace*{-0.091cm}+\hspace*{-0.091cm}\theta_k\x^*\hspace*{-0.091cm}-\hspace*{-0.091cm}\x^{k+1}\|_F^2 \right]\\
&+L\<\delta^k,(1-\theta_k)\x^k+\theta_k\x^*-\x^{k+1}\>-\frac{\mu\theta_k}{2}\|\x^*-\y^k\|_F^2\\
\overset{b}\leq&\frac{\vartheta_k}{2\beta_0}\hspace*{-0.07cm}\left[ \left\|\frac{\beta_0}{\vartheta_{k-1}}\Pi\x^k\hspace*{-0.07cm}-\hspace*{-0.07cm}\lambda^*\right\|_F^2\hspace*{-0.12cm}-\hspace*{-0.07cm}\left\|\frac{\beta_0}{\vartheta_k}\Pi\x^{k+1}\hspace*{-0.07cm}-\hspace*{-0.07cm}\lambda^*\right\|_F^2\hspace*{-0.12cm}+\hspace*{-0.07cm}\frac{\beta_0^2}{\vartheta_k^2}\|\Pi\delta^k\|_F^2\right]\\
&+\frac{L\theta_k^2}{2}\left[ \left\|\frac{\y^k}{\theta_k}-\frac{1-\theta_k}{\theta_k}\x^k-\x^*\right\|_F^2-\|\w^{k+1}-\x^*\|_F^2\right]\\
&-L\theta_k\<\delta^k,\w^{k+1}-\x^*\>-\frac{\mu\theta_k}{2}\|\x^*-\y^k\|_F^2.
\end{aligned}
\end{eqnarray}
where $\overset{b}\leq$ follows from the second identity in Lemma \ref{lemma08}. By reorganizing the terms in $\frac{\y^k}{\theta_k}-\frac{1-\theta_k}{\theta_k}\x^k-\x^*$ carefully, we have
\begin{eqnarray}
\begin{aligned}\notag
&\frac{L\theta_k^2}{2}\left\|\frac{\y^k}{\theta_k}-\frac{1-\theta_k}{\theta_k}\x^k-\x^*\right\|_F^2\\
=&\frac{L\theta_k^2}{2}\hspace*{-0.09cm}\left\|\hspace*{-0.09cm}\frac{\mu}{L\theta_k}\hspace*{-0.09cm}\left(\hspace*{-0.04cm}\y^k\hspace*{-0.09cm}-\hspace*{-0.09cm}\x^*\hspace*{-0.04cm}\right)\hspace*{-0.15cm}+\hspace*{-0.15cm}\left(\hspace*{-0.13cm}1\hspace*{-0.09cm}-\hspace*{-0.09cm}\frac{\mu}{L\theta_k}\hspace*{-0.08cm}\right)\hspace*{-0.18cm}\left(\hspace*{-0.09cm}\frac{L-\mu}{L\theta_k\hspace*{-0.09cm}-\hspace*{-0.09cm}\mu}\y^k\hspace*{-0.09cm}-\hspace*{-0.09cm}\frac{(\hspace*{-0.05cm}1\hspace*{-0.09cm}-\hspace*{-0.09cm}\theta_k\hspace*{-0.05cm})L}{L\theta_k\hspace*{-0.09cm}-\hspace*{-0.09cm}\mu}\x^k\hspace*{-0.09cm}-\hspace*{-0.09cm}\x^*\hspace*{-0.09cm}\right)\hspace*{-0.09cm}\right\|_F^2\\
\overset{c}\leq&\frac{\mu\theta_k}{2}\|\y^k\hspace*{-0.09cm}-\hspace*{-0.09cm}\x^*\|_F^2\hspace*{-0.07cm}+\hspace*{-0.08cm}\frac{(\hspace*{-0.04cm}L\theta_k\hspace*{-0.09cm}-\hspace*{-0.09cm}\mu\hspace*{-0.04cm})\theta_k}{2}\hspace*{-0.09cm}\left\|\hspace*{-0.04cm}\frac{L-\mu}{L\theta_k\hspace*{-0.09cm}-\hspace*{-0.09cm}\mu}\y^k\hspace*{-0.09cm}-\hspace*{-0.09cm}\frac{(\hspace*{-0.05cm}1\hspace*{-0.09cm}-\hspace*{-0.09cm}\theta_k\hspace*{-0.05cm})L}{L\theta_k\hspace*{-0.09cm}-\hspace*{-0.09cm}\mu}\x^k\hspace*{-0.09cm}-\hspace*{-0.09cm}\x^*\hspace*{-0.03cm}\right\|_F^2\\
\overset{d}=&\frac{\mu\theta_k}{2}\|\y^k-\x^*\|_F^2+\frac{(L\theta_k-\mu)\theta_k}{2}\left\|\w^k-\x^*\right\|_F^2,
\end{aligned}
\end{eqnarray}
where we let $\frac{\mu}{L\theta_k}\leq 1$, and use Jensen's inequality for $\|\cdot\|_F^2$ in $\overset{c}\leq$, and the first identify in Lemma \ref{lemma08} in $\overset{d}=$. Plugging it into the above inequality and using the bounds for $\|\delta^k\|_F$ and $\|\Pi\delta^k\|_F$ in Lemma \ref{precision_lemma}, we get (\ref{lemma_nc}).
\hfill$\Box$\end{proof}

Due to the term $\|\w^{k+1}-\x^*\|_F$ on the right hand side of (\ref{lemma_nc}), recursion (\ref{lemma_nc}) cannot be directly telescoped unless we assume the boundness of $\|\w^{k+1}-\x^*\|_F$. Lemma \ref{schnidt-lemma} can be used to avoid such boundness assumption. Now, we use Lemmas \ref{lemma0} and \ref{schnidt-lemma} to analyze procedure (\ref{apg1})-(\ref{apg3}). The following theorem shows the convergence for strongly convex problems.
\begin{theorem}\label{theorem_DNC_sc}
Assume that Assumptions \ref{assumption_f}, \ref{assumption_w} and (\ref{inexact_assumption}) hold with $\mu>0$, and $\varepsilon_k\leq (1-(1+\tau)\theta)^{k+1}$ holds for all $k\leq K$, where $1>\tau>0$ can be any small constant. Let sequences $\{\theta_k\}$ and $\{\vartheta_k\}$ satisfy $\theta_k=\theta=\sqrt{\frac{\mu}{L}}$ for all $k$, and $\vartheta_k=(1-\theta)^{k+1}$. Then, we have
\begin{eqnarray}
\begin{aligned}\notag
&f(\x^{K+1})-f(\x^*)\leq C_2(1-\theta)^{K+1},\\
&\|\Pi\x^{K+1}\|_F\leq C_3(1-\theta)^{K+1},\\
&\|\x^{K+1}-\x^*\|_F^2\leq C_4(1-\theta)^{K+1},\\
&f\left(\alpha(\x^{K+1})\right)-f(\x^*)\leq C_5(1-\theta)^{K+1}+\frac{LC_3^2}{2}(1-\theta)^{2K+2},
\end{aligned}
\end{eqnarray}
where $C_2=C_6+\|\lambda^*\|_FC_3$, $C_3=\frac{\sqrt{2\beta_0C_6}+\|\lambda^*\|_F}{\beta_0}$, $C_4=\frac{2C_6}{\mu}$, $C_5=(\|\nabla f(\x^*)\|_F+L\sqrt{C_4})C_3+C_2$ and $C_6=\frac{18}{\tau^2\theta^2}+2\left(f(\x^0)-f(\x^*)+\<\lambda^*,\Pi\x^0\>\right)+\frac{1}{\beta_0}\|\beta_0\Pi\x^0-\lambda^*\|_F^2+\mu\|\x^0-\x^*\|_F^2$.
\end{theorem}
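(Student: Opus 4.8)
The plan is to specialize the one-step estimate in Lemma~\ref{lemma0}, normalize it into a telescopable form, and then absorb the inexactness error via Lemma~\ref{schnidt-lemma}. First I would substitute $\theta_k=\theta=\sqrt{\mu/L}$ and $\vartheta_k=(1-\theta)^{k+1}$ into Lemma~\ref{lemma0}: the hypotheses $\frac{1-\theta_k}{\vartheta_k}=\frac{1}{\vartheta_{k-1}}$ and $\theta_k\geq\mu/L$ both hold, and one computes $L\theta_k^2=\mu$, $(L\theta_k-\mu)\theta_k=\mu(1-\theta)$, $\vartheta_k=(1-\theta)\vartheta_{k-1}$, and $L\theta_k\sqrt{2\varepsilon_k/L}=\theta\sqrt{2L\varepsilon_k}$. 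Introducing the Lyapunov quantity
\[
u_k=\rho_k+\frac{\vartheta_{k-1}}{2\beta_0}\Big\|\frac{\beta_0}{\vartheta_{k-1}}\Pi\x^k-\lambda^*\Big\|_F^2+\frac{\mu}{2}\|\w^k-\x^*\|_F^2
\]
(with $\vartheta_{-1}=1$ and $\w^0=\x^0$, so that $u_0\geq0$ by Lemma~\ref{lemma03} and $C_6=\frac{18}{\tau^2\theta^2}+2u_0$), inequality (\ref{lemma_nc}) collapses to
\[
u_{k+1}\leq(1-\theta)u_k+\varepsilon_k+\theta\sqrt{2L\varepsilon_k}\,\|\w^{k+1}-\x^*\|_F,\qquad 0\leq k\leq K.
\]
The obstacle is the last term: since it involves $\|\w^{k+1}-\x^*\|_F$, the recursion cannot be telescoped directly.

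To get around this I would proceed exactly as the paper anticipates with Lemma~\ref{schnidt-lemma}. Dividing by $(1-\theta)^{k+1}$, setting $v_k=\sqrt{u_k/(1-\theta)^k}$ and $r=\frac{1-(1+\tau)\theta}{1-\theta}$ (the hypothesis forces $(1+\tau)\theta<1$, hence $r\in(0,1)$), and using $\frac{\mu}{2}\|\w^{k+1}-\x^*\|_F^2\leq u_{k+1}$, the identity $\theta\sqrt{L/\mu}=1$, and $\varepsilon_k\leq(1-(1+\tau)\theta)^{k+1}$, the recursion becomes
\[
v_{k+1}^2\leq v_k^2+r^{k+1}+2r^{(k+1)/2}v_{k+1}.
\]
Summing over $k=0,\dots,n-1$ gives, for every $n\leq K+1$, $v_n^2\leq s_n+\sum_{j=1}^n\alpha_j v_j$ with $\alpha_j=2r^{j/2}$ and $s_n=v_0^2+\sum_{j=1}^n r^j$, which is increasing and satisfies $v_0^2\leq s_0$. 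Lemma~\ref{schnidt-lemma} then yields $v_{K+1}\leq\sum_{j\geq1}r^{j/2}+\sqrt{\big(\sum_{j\geq1}r^{j/2}\big)^2+v_0^2+\sum_{j\geq1}r^j}$. Bounding the geometric sums through $1-\sqrt r=\frac{1-r}{1+\sqrt r}\geq\frac{1-r}{2}=\frac{\tau\theta}{2(1-\theta)}\geq\frac{\tau\theta}{2}$ and $1-r=\frac{\tau\theta}{1-\theta}\geq\tau\theta$ (and $\tau\theta<1$), one obtains $v_{K+1}^2\leq\frac{18}{\tau^2\theta^2}+2u_0=C_6$, so $u_{K+1}\leq C_6(1-\theta)^{K+1}$. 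This error-absorption step is the crux of the proof, and the constant $18$ is exactly what these bounds produce.

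With $u_{K+1}\leq C_6(1-\theta)^{K+1}$ in hand, the four claimed estimates follow by reading off the three nonnegative summands of $u_{K+1}$ (recall $\rho_{K+1}\geq0$ by Lemma~\ref{lemma03}). From the middle summand, $\big\|\frac{\beta_0}{\vartheta_K}\Pi\x^{K+1}-\lambda^*\big\|_F^2\leq2\beta_0 C_6$, whence $\|\Pi\x^{K+1}\|_F\leq\frac{\vartheta_K}{\beta_0}\big(\sqrt{2\beta_0 C_6}+\|\lambda^*\|_F\big)=C_3(1-\theta)^{K+1}$. Since $\rho_{K+1}\leq u_{K+1}$, $f(\x^{K+1})-f(\x^*)=\rho_{K+1}-\langle\lambda^*,\Pi\x^{K+1}\rangle\leq u_{K+1}+\|\lambda^*\|_F\|\Pi\x^{K+1}\|_F\leq C_2(1-\theta)^{K+1}$. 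Using the stationarity relation $\nabla f(\x^*)=-\Pi\lambda^*$ for the saddle-point problem of Lemma~\ref{lemma02} together with $\mu$-strong convexity of $f$, $\frac{\mu}{2}\|\x^{K+1}-\x^*\|_F^2\leq f(\x^{K+1})-f(\x^*)+\langle\lambda^*,\Pi\x^{K+1}\rangle=\rho_{K+1}\leq C_6(1-\theta)^{K+1}$, which is the $C_4$ bound. Finally, for $\alpha(\x^{K+1})$ write $\1\alpha(\x^{K+1})^T-\x^{K+1}=-\Pi\x^{K+1}$, apply $L$-smoothness of $f$ at $\x^{K+1}$, and bound $\|\nabla f(\x^{K+1})\|_F\leq\|\nabla f(\x^*)\|_F+L\|\x^{K+1}-\x^*\|_F\leq\|\nabla f(\x^*)\|_F+L\sqrt{C_4}$ (using $(1-\theta)^{K+1}\leq1$); this gives $f(\alpha(\x^{K+1}))-f(\x^*)\leq C_5(1-\theta)^{K+1}+\frac{LC_3^2}{2}(1-\theta)^{2K+2}$. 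Apart from the error-absorption step, everything is routine bookkeeping.
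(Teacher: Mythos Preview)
Your proposal is correct and follows essentially the same route as the paper: your $v_k$ coincides with the paper's Lyapunov quantity $\ell_k$ (since $L\theta^2=\mu$ and $u_k=(1-\theta)^k\ell_k^2$), and both arguments divide the one-step estimate of Lemma~\ref{lemma0} by $(1-\theta)^{k+1}$, apply Lemma~\ref{schnidt-lemma} to absorb the $\|\w^{k+1}-\x^*\|_F$ term, and bound the resulting geometric sums to obtain $v_{K+1}^2\leq\frac{18}{\tau^2\theta^2}+2u_0=C_6$. The extraction of the four estimates from the three nonnegative summands of $u_{K+1}$, including the strong-convexity argument for $C_4$ and the smoothness argument in (\ref{cont13}) for $C_5$, is likewise identical to the paper's.
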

\begin{proof}
The setting of $\theta=\sqrt{\frac{\mu}{L}}$ satisfies
\begin{equation}
(L\theta-\mu)\theta=L\theta^2(1-\theta).\label{cont12}
\end{equation}
Sequences $\{\theta_k\}$ and $\{\vartheta_k\}$ satisfy the requirement in Lemma \ref{lemma0}. Define the Lyapunov function $\ell_{k+1}$ as follows:
\begin{eqnarray}
\begin{aligned}\notag
\ell_{k+1}^2=\frac{\rho_{k+1}+\frac{\vartheta_k}{2\beta_0}\left\|\frac{\beta_0}{\vartheta_k}\Pi\x^{k+1}-\lambda^*\right\|_F^2+\frac{L\theta^2}{2}\|\w^{k+1}-\x^*\|_F^2}{(1-\theta)^{k+1}},
\end{aligned}
\end{eqnarray}
where $\rho_k$ is defined in (\ref{def_rho}). Dividing both sides of (\ref{lemma_nc}) by $(1-\theta)^{k+1}$, and using (\ref{cont12}) and $\vartheta_k=(1-\theta)\vartheta_{k-1}$, we have
\begin{eqnarray}
\begin{aligned}\notag
\ell_{k+1}^2-\ell_k^2\leq\frac{\varepsilon_k}{(1-\theta)^{k+1}}+\frac{L\theta}{(1-\theta)^{k+1}}\sqrt{\frac{2\varepsilon_k}{L}}\|\w^{k+1}-\x^*\|_F.
\end{aligned}
\end{eqnarray}
Summing over $k=0,1,\cdots,K$, we have
\begin{eqnarray}
\begin{aligned}\notag
&\ell_{K+1}^2-\ell_0^2\\
\leq&\sum_{k=0}^K\frac{\varepsilon_k}{(1-\theta)^{k+1}}+\sum_{k=0}^K\frac{L\theta}{(1-\theta)^{k+1}}\sqrt{\frac{2\varepsilon_k}{L}}\|\w^{k+1}-\x^*\|_F\\
=&\sum_{k=0}^K\frac{\varepsilon_k}{(1-\theta)^{k+1}}+\sum_{k=1}^{K+1}\frac{2\sqrt{\varepsilon_{k-1}}}{(1-\theta)^{k/2}}\sqrt{\frac{L\theta^2}{2(1-\theta)^{k}}}\|\w^k-\x^*\|_F\\
\overset{a}\leq&\sum_{k=0}^K\frac{\varepsilon_k}{(1-\theta)^{k+1}}+\sum_{k=1}^{K+1}\frac{2\sqrt{\varepsilon_{k-1}}}{(1-\theta)^{k/2}}\ell_k,
\end{aligned}
\end{eqnarray}
where we use the definition of $\ell_k$ and $\rho_k\geq 0$ in $\overset{a}\leq$. Letting $s_{k+1}=\sum_{t=0}^k\frac{\varepsilon_t}{(1-\theta)^{t+1}}+\ell_0^2$ and $\alpha_k=\frac{2\sqrt{\varepsilon_{k-1}}}{(1-\theta)^{k/2}}$, then we have $\ell_{k+1}^2\leq s_{k+1}+\sum_{i=1}^{k+1}\alpha_i\ell_i$ and $\ell_0^2= s_0$. From Lemma \ref{schnidt-lemma}, we have $\ell_{k+1}\leq \frac{1}{2}\sum_{i=1}^{k+1}\alpha_i+\sqrt{\left(\frac{1}{2}\sum_{i=1}^{k+1}\alpha_i\right)^2+s_{k+1}}$. Letting $\varepsilon_k\leq(1-(1+\tau)\theta)^{k+1}$, and after some simple computing, we obtain
\begin{eqnarray}
\begin{aligned}\notag
\ell_{K+1}^2\leq&\left(\sum_{k=1}^{K+1}\frac{2\sqrt{\varepsilon_{k-1}}}{(1-\theta)^{k/2}}\right)^2+\sum_{k=0}^K\frac{2\varepsilon_k}{(1-\theta)^{k+1}}+2\ell_0^2\\
\leq&\frac{18}{\tau^2\theta^2}+2\ell_0^2\equiv C_6.
\end{aligned}
\end{eqnarray}
From the definition of $\ell_{K+1}$ and $\rho_k\geq 0$, we get the second conclusion. From the definition of $\rho_{k+1}$, we have $f(\x^{k+1})-f(\x^*)\leq \rho_{k+1}+\|\lambda^*\|_F\|\Pi\x^{k+1}\|_F$, which further leads to the first conclusion. Since $f(\x)+\<\lambda^*,\Pi\x\>$ is $\mu$-strongly convex over $\x$ and $\x^*=\argmin_{\x} f(\x)+\<\lambda^*,\Pi\x\>$, we have $\frac{\mu}{2}\|\x^{K+1}-\x^*\|_F^2\leq f(\x^{K+1})+\<\lambda^*,\Pi\x^{K+1}\>-f(\x^*)-\<\lambda^*,\Pi\x^*\>=\rho_{K+1}\leq C_6(1-\theta)^{K+1}$, i.e., the third conclusion. For the fourth conclusion, we have
\begin{eqnarray}
\begin{aligned}\label{cont13}
&f\left(\alpha(\x^{K+1}\right)-f(\x^*)\\
=&f\left(\alpha(\x^{K+1}\right)-f(\x^{K+1})+f(\x^{K+1})-f(\x^*)\\
\overset{b}\leq&\hspace*{-0.07cm} \<\nabla  f(\x^{K+1}),-\Pi\x^{K+1}\>\hspace*{-0.09cm}+\hspace*{-0.09cm}\frac{L}{2}\left\|\Pi\x^{K+1}\right\|_F^2\hspace*{-0.09cm}+\hspace*{-0.09cm}f(\x^{K+1})\hspace*{-0.09cm}-\hspace*{-0.09cm}f(\x^*)\hspace*{-0.1cm}\\
\overset{c}\leq&\hspace*{-0.05cm}\left(\hspace*{-0.05cm}\|\nabla f(\x^*)\|_F\hspace*{-0.08cm}+\hspace*{-0.08cm}L\|\x^{K+1}\hspace*{-0.08cm}-\hspace*{-0.08cm}\x^*\|_F\hspace*{-0.05cm}\right)\hspace*{-0.08cm}\|\Pi\x^{K+1}\|_F\hspace*{-0.08cm}+\hspace*{-0.08cm}\frac{L}{2}\|\Pi\x^{K+1}\|_F^2\hspace*{-0.1cm}\\
&+f(\x^{K+1})-f(\x^*),
\end{aligned}
\end{eqnarray}
where we use the smoothness of $f(\x)$ and the definition of $\Pi$ in $\overset{b}\leq$ and $\overset{c}\leq$.
\hfill$\Box$\end{proof}

In the following theorem, we consider the case that $f(\x)$ is nonstrongly convex.
\begin{theorem}\label{theorem_DNC_ns}
Assume that Assumptions \ref{assumption_f}, \ref{assumption_w} and (\ref{inexact_assumption}) hold with $\mu=0$ and $\varepsilon_k\leq\frac{1}{(k+1)^{6}}$ for all $k\leq K$. Let sequences $\{\theta_k\}$ and $\{\vartheta_k\}$ satisfy $\theta_0=1$, $\frac{1-\theta_k}{\theta_k^2}=\frac{1}{\theta_{k-1}^2}$, and $\vartheta_k=\theta_k^2$. Then, we have
\begin{eqnarray}
\begin{aligned}\notag
&f(\x^{K+1})-f(\x^*)\leq \frac{C_7}{(K+2)^2},\\
&\|\Pi\x^{K+1}\|_F\leq \frac{C_8}{(K+2)^2},\\
&\|\x^{K+1}-\x^*\|_F^2\leq C_9,\\
&f\left(\alpha(\x^{K+1})\right)-f(\x^*)\leq \frac{C_{10}}{(K+2)^2}+\frac{LC_8^2}{2(K+2)^4},
\end{aligned}
\end{eqnarray}
where $C_7=4C_{11}+\|\nabla f(\x^*)\|_FC_8$, $C_8=\frac{4\sqrt{2\beta_0C_{11}}+4\|\nabla f(\x^*)\|_F}{\beta_0}$, $C_9=\frac{2C_{11}}{L}$, $C_{10}=(\|\nabla f(\x^*)\|_F+L\sqrt{C_9})C_8+C_7$, and $C_{11}=5+\frac{\|\nabla f(\x^*)\|_F^2}{\beta_0}+L\|\x^0-\x^*\|_F^2$.
\end{theorem}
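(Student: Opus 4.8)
The plan is to run the same argument as in the proof of Theorem~\ref{theorem_DNC_sc}, replacing the geometric scaling $(1-\theta)^{k+1}$ by the polynomial scaling $\theta_k^2$. First I would check the hypotheses of Lemma~\ref{lemma0} with $\mu=0$: the prescribed relation $\frac{1-\theta_k}{\theta_k^2}=\frac{1}{\theta_{k-1}^2}$ together with $\vartheta_k=\theta_k^2$ is exactly $\frac{1-\theta_k}{\vartheta_k}=\frac{1}{\vartheta_{k-1}}$, and $\theta_k\geq 0=\frac{\mu}{L}$ is trivial, so (\ref{lemma_nc}) holds for all $k\le K$ under assumption (\ref{inexact_assumption}). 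I would also record the standard estimates for the accelerated sequence: writing $a_k=1/\theta_k$, the recursion gives $(a_k-a_{k-1})(a_k+a_{k-1})=a_k$, hence $a_k-a_{k-1}\in(\tfrac12,1)$ and therefore $\frac{k+2}{2}\le\frac{1}{\theta_k}\le k+1$, i.e. $\frac{1}{k+1}\le\theta_k\le\frac{2}{k+2}$.

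Next, fix a KKT pair $(\x^*,\lambda^*)$ of $\min_\x\max_\lambda f(\x)+\<\lambda,\Pi\x\>$ as in Lemma~\ref{lemma02}(1) so that $\|\lambda^*\|_F\le\|\nabla f(\x^*)\|_F$, and define the Lyapunov function $\ell_{k+1}^2=\frac{\rho_{k+1}}{\theta_k^2}+\frac{1}{2\beta_0}\left\|\frac{\beta_0}{\vartheta_k}\Pi\x^{k+1}-\lambda^*\right\|_F^2+\frac{L}{2}\|\w^{k+1}-\x^*\|_F^2$ with $\rho_{k+1}$ as in (\ref{def_rho}); at $k=0$ the term $\frac{(1-\theta_0)\rho_0}{\theta_0^2}$ vanishes (since $\theta_0=1$) and $\frac{\beta_0}{\vartheta_{-1}}=0$, so $\ell_0^2=\frac{1}{2\beta_0}\|\lambda^*\|_F^2+\frac{L}{2}\|\x^0-\x^*\|_F^2\le\frac{\|\nabla f(\x^*)\|_F^2}{2\beta_0}+\frac{L}{2}\|\x^0-\x^*\|_F^2$. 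Dividing (\ref{lemma_nc}) through by $\theta_k^2=\vartheta_k$, using $(L\theta_k-\mu)\theta_k=L\theta_k^2$ and $\frac{(1-\theta_k)\rho_k}{\theta_k^2}=\frac{\rho_k}{\theta_{k-1}^2}$, all of the penalty terms and the $\w$-terms line up, leaving $\ell_{k+1}^2-\ell_k^2\le\frac{\varepsilon_k}{\theta_k^2}+\frac{\sqrt{2L\varepsilon_k}}{\theta_k}\|\w^{k+1}-\x^*\|_F$. Summing over $k=0,\dots,K$ and bounding $\|\w^{k+1}-\x^*\|_F\le\sqrt{2/L}\,\ell_{k+1}$ yields $\ell_{K+1}^2\le s_{K+1}+\sum_{i=1}^{K+1}\alpha_i\ell_i$ with $s_{k+1}=\ell_0^2+\sum_{t=0}^{k}\frac{\varepsilon_t}{\theta_t^2}$ and $\alpha_k=\frac{2\sqrt{\varepsilon_{k-1}}}{\theta_{k-1}}$, and since $\ell_0^2=s_0$, Lemma~\ref{schnidt-lemma} gives $\ell_{K+1}^2\le\big(\sum_{k=1}^{K+1}\alpha_k\big)^2+2\sum_{t=0}^{K}\frac{\varepsilon_t}{\theta_t^2}+2\ell_0^2$. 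With $\varepsilon_k\le(k+1)^{-6}$ and $\frac{1}{\theta_k}\le k+1$ one gets $\alpha_k\le 2k^{-2}$ and $\frac{\varepsilon_t}{\theta_t^2}\le(t+1)^{-4}$, so the first two terms are bounded by an absolute constant; absorbing it into the ``$5$'' gives $\ell_{K+1}^2\le C_{11}$, and in fact $\ell_j^2\le C_{11}$ for every $j\le K+1$ because $s_j$ is nondecreasing.

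Finally I would read off the four bounds. From $\ell_{K+1}^2\le C_{11}$ and $\rho_{K+1}\ge0$ (Lemma~\ref{lemma03}): the penalty term gives $\frac{\beta_0}{\vartheta_K}\|\Pi\x^{K+1}\|_F\le\sqrt{2\beta_0C_{11}}+\|\lambda^*\|_F\le\sqrt{2\beta_0C_{11}}+\|\nabla f(\x^*)\|_F$, and $\vartheta_K=\theta_K^2\le\frac{4}{(K+2)^2}$ gives the consensus bound with $C_8$; $\rho_{K+1}\le C_{11}\theta_K^2\le\frac{4C_{11}}{(K+2)^2}$ together with $f(\x^{K+1})-f(\x^*)=\rho_{K+1}-\<\lambda^*,\Pi\x^{K+1}\>\le\rho_{K+1}+\|\lambda^*\|_F\|\Pi\x^{K+1}\|_F$ gives the function-value bound with $C_7$. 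The one genuinely new point is the bound on $\|\x^{K+1}-\x^*\|_F^2$: without strong convexity it cannot be extracted from $\rho_{K+1}$, so instead I use the convex-combination identity $\x^{k+1}=(1-\theta_k)\x^k+\theta_k\w^{k+1}$ and convexity of $\|\cdot-\x^*\|_F^2$ to get $\|\x^{k+1}-\x^*\|_F^2\le\max\{\|\x^k-\x^*\|_F^2,\|\w^{k+1}-\x^*\|_F^2\}$, then induct from $\w^0=\x^0$ using $\frac{L}{2}\|\w^j-\x^*\|_F^2\le\ell_j^2\le C_{11}$ to obtain $\|\x^{K+1}-\x^*\|_F^2\le\frac{2C_{11}}{L}=C_9$; the last conclusion then follows by substituting these three bounds into the smoothness estimate already used in (\ref{cont13}), which does not invoke strong convexity. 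The main obstacle is the same as in Theorem~\ref{theorem_DNC_sc}, namely that the inexactness term $\|\w^{k+1}-\x^*\|_F$ on the right of (\ref{lemma_nc}) blocks a direct telescoping; this is handled by Lemma~\ref{schnidt-lemma}, and the only extra care needed here is checking that the schedule $\varepsilon_k\le(k+1)^{-6}$ keeps both $\sum_k\alpha_k$ and $\sum_k\varepsilon_k/\theta_k^2$ finite even though $1/\theta_k$ grows linearly.
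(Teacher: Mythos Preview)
Your proposal is correct and follows essentially the same approach as the paper: the same Lyapunov function $\ell_{k+1}^2$, the same division of (\ref{lemma_nc}) by $\theta_k^2$, the same appeal to Lemma~\ref{schnidt-lemma}, and the same smoothness estimate (\ref{cont13}) at the end. The only cosmetic difference is in the iterate bound: the paper uses the triangle inequality $\|\x^{k+1}-\x^*\|_F\le\theta_k\|\w^{k+1}-\x^*\|_F+(1-\theta_k)\|\x^k-\x^*\|_F$ and inducts on the norm, whereas you use convexity of $\|\cdot-\x^*\|_F^2$ and induct on the square, which is equivalent.
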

\begin{proof}
Define the following Lyapunov function $\ell_{k+1}$
\begin{eqnarray}
\begin{aligned}\notag
&\ell_{k+1}^2=\frac{\rho_{k+1}}{\theta_k^2}+\frac{1}{2\beta_0}\left\|\frac{\beta_0}{\vartheta_k}\Pi\x^{k+1}-\lambda^*\right\|_F^2+\frac{L}{2}\|\w^{k+1}-\x^*\|_F^2.\\
\end{aligned}
\end{eqnarray}
Dividing both sides of (\ref{lemma_nc}) by $\theta_k^2$, using $\vartheta_k=\theta_k^2$ and $\frac{1-\theta_k}{\theta_k^2}=\frac{1}{\theta_{k-1}^2}$, we have
\begin{eqnarray}
\begin{aligned}\notag
\ell_{k+1}^2-\ell_k^2\leq\frac{\varepsilon_k}{\theta_k^2}+\frac{L}{\theta_k}\sqrt{\frac{2\varepsilon_k}{L}}\|\w^{k+1}-\x^*\|_F.
\end{aligned}
\end{eqnarray}
Similar to the proof of Theorem \ref{theorem_DNC_sc}, we obtain
\begin{eqnarray}
\begin{aligned}\notag
\ell_{K+1}^2-\ell_0^2\leq\sum_{k=0}^K\frac{\varepsilon_k}{\theta_k^2}+\sum_{k=1}^{K+1}\frac{2\sqrt{\varepsilon_{k-1}}}{\theta_{k-1}}\ell_k.
\end{aligned}
\end{eqnarray}
From Lemma \ref{schnidt-lemma} and a similar induction to Theorem \ref{theorem_DNC_sc}, we have
\begin{eqnarray}
\begin{aligned}\notag
&\ell_{K+1}^2\\
\leq&\left(\sum_{k=1}^{K+1}\frac{2\sqrt{\varepsilon_{k-1}}}{\theta_{k-1}}\right)^2+\sum_{k=0}^K\frac{2\varepsilon_k}{\theta_k^2}+2\ell_0^2\\
\overset{a}\leq&\left(\sum_{k=1}^{K+1}\hspace*{-0.09cm}2k\sqrt{\varepsilon_{k-1}}\hspace*{-0.09cm}\right)^2\hspace*{-0.15cm}+\hspace*{-0.09cm}\sum_{k=0}^K2\varepsilon_k(k+1)^2\hspace*{-0.09cm}+\hspace*{-0.09cm}\frac{\|\lambda^*\|_F^2}{\beta_0}\hspace*{-0.09cm}+\hspace*{-0.09cm}L\|\w^0\hspace*{-0.09cm}-\hspace*{-0.09cm}\x^*\|_F^2,
\end{aligned}
\end{eqnarray}
where we use $\frac{1}{k+1}\leq\theta_k\leq\frac{2}{k+2}$ and $\frac{1}{\theta_{-1}^2}=0$ in $\overset{a}\leq$, which can be derived from $\frac{1-\theta_k}{\theta_k^2}=\frac{1}{\theta_{k-1}^2}$ and $\theta_0=1$. Letting $\varepsilon_k\leq\frac{1}{(k+1)^{4+2\tau}}$, then we have $\sum_{k=0}^K2\varepsilon_k(k+1)^2\leq\frac{2}{1+2\tau}$ and $\sum_{k=1}^{K+1}2k\sqrt{\varepsilon_{k-1}}\leq\frac{2}{\tau}$. So
\begin{eqnarray}
\begin{aligned}\notag
\ell_{K+1}^2\leq\frac{4}{\tau^2}+\frac{4}{1+2\tau}+\frac{\|\lambda^*\|_F^2}{\beta_0}+L\|\x^0-\x^*\|_F^2\equiv C_{11},
\end{aligned}
\end{eqnarray}
where we let $\tau=1$ for simplicity and use Lemma \ref{lemma02}. From the definition of $\w^{k+1}=\frac{\x^{k+1}}{\theta_k}-\frac{1-\theta_k}{\theta_k}\x^k$, we have $\|\x^{k+1}-\x^*\|_F=\|\theta_k\w^{k+1}+(1-\theta_k)\x^k-\x^*\|_F\leq \theta_k\|\w^{k+1}-\x^*\|_F+(1-\theta_k)\|\x^k-\x^*\|_F$. By induction, we can prove $\|\x^{K+1}-\x^*\|_F^2\leq \frac{2C_{11}}{L}$ for any $k$. Similar to the proof of Theorem \ref{theorem_DNC_sc} and using Lemma \ref{lemma02}, we have the remaining conclusions.
\hfill$\Box$\end{proof}

\subsubsection{Total Numbers of Communications and Computations}
Based on Theorems \ref{theorem_DNC_sc} and \ref{theorem_DNC_ns}, and the inner loop iteration number given in (\ref{tk}), we can establish the gradient computation and communication complexities for Algorithm \ref{D-NC}. We first consider the strongly convex case and prove Theorem \ref{theorem_D_NC_sc}.

\hspace*{-0.7cm}\begin{proof}
$\|\Pi\z^k\|_F$ appears in (\ref{tk}). We first prove that $\|\Pi\z^k\|_F$ is bounded for any $k$ given $T_k=\frac{1}{-2\log\left(1-\sqrt{1-\sigma_2(\W)}\right)}\log \left(\frac{\beta_0}{2\vartheta_k\varepsilon_k}\left(\frac{1}{L}\|\nabla f(\x^*)\|_F+6\sqrt{C_4}\right)^2\right)$, where $C_4$ is defined in Theorem \ref{theorem_DNC_sc}. We prove $\|\Pi\z^k\|_F\leq \frac{1}{L}\|\nabla f(\x^*)\|_F+6\sqrt{C_4}$ by induction. The case for $k=0$ can be easily verified since $\|\Pi\z^0\|_F=\|\Pi\x^0-\Pi\x^*\|_F\leq\|\x^0-\x^*\|_F$. Assume that the conclusion holds for all $k\leq K$. Then from (\ref{tk}) we know that (\ref{inexact_assumption}) holds for $k\leq K$. From Theorem \ref{theorem_DNC_sc}, we have $\|\x^K-\x^*\|_F\leq \sqrt{C_4}$ and $\|\x^{K+1}-\x^*\|_F\leq \sqrt{C_4}$. Thus,
\begin{eqnarray}
\begin{aligned}\notag
&\|\Pi\z^{K+1}\|_F\\
\overset{a}\leq& \|\Pi\y^{K+1}\|_F+\frac{1}{L}\|\nabla f(\y^{K+1})\|_F\\
\overset{b}\leq& \|\Pi(\y^{K+1}-\x^*)\|_F+\frac{1}{L}\left(\|\nabla f(\x^*)\|_F+L\|\y^{K+1}-\x^*\|_F\right)\\
\leq& \frac{1}{L}\|\nabla f(\x^*)\|_F+2\|\y^{K+1}-\x^*\|_F\\
\overset{c}\leq& \frac{1}{L}\|\nabla f(\x^*)\|_F+4\|\x^{K+1}-\x^*\|_F+2\|\x^K-\x^*\|_F\\
\leq& \frac{1}{L}\|\nabla f(\x^*)\|_F+6\sqrt{C_4},
\end{aligned}
\end{eqnarray}
where we use (\ref{apg2}) in $\overset{a}\leq$, the smoothness of $f(\x)$ and $\Pi\x^*=\0$ in $\overset{b}\leq$, and $\y^k=\x^k+\frac{\sqrt{L}-\sqrt{\mu}}{\sqrt{L}+\sqrt{\mu}}(\x^k-\x^{k-1})$ in $\overset{c}\leq$, which is equivalent to (\ref{apg1}) with the special setting of $\theta_k$. So we get the conclusion.

From Theorem \ref{theorem_DNC_sc}, to find a solution satisfying (\ref{cont14}), we know that the number of gradient computations, i.e., the number of outer iterations, is $O\left(\sqrt{\frac{L}{\mu}}\log\frac{1}{\epsilon}\right)$. From (\ref{tk}), we have
\begin{eqnarray}
\begin{aligned}\notag
T_k=&O\left(\frac{1}{-\log\left(1-\sqrt{1-\sigma_2(\W)}\right)}\log\frac{1}{(1-\theta)^{2(k+1)}}\right)\\
=&O\left(\frac{k\log\left(1-\sqrt{\mu/L}\right)}{\log\left(1-\sqrt{1-\sigma_2(\W)}\right)}\right)\overset{c}=O\left(\frac{k\sqrt{\mu/L}}{\sqrt{1-\sigma_2(\W)}}\right),
\end{aligned}
\end{eqnarray}
where we use $\log\left((1\hspace*{-0.05cm}-\hspace*{-0.05cm}\sqrt{1\hspace*{-0.05cm}-\hspace*{-0.05cm}\sigma_2(\W)})\right)\hspace*{-0.05cm}\approx\hspace*{-0.05cm} -\sqrt{1\hspace*{-0.05cm}-\hspace*{-0.05cm}\sigma_2(\W)}$ and $\log\left(1\hspace*{-0.05cm}-\hspace*{-0.05cm}\sqrt{\mu/L}\right)\hspace*{-0.05cm}\approx\hspace*{-0.05cm} -\sqrt{\mu/L}$ in $\overset{c}=$ from Taylor expansion when $\sqrt{1-\sigma_2(\W)}$ and $\sqrt{\mu/L}$ are small. Thus, the total number of communications, i.e., the total number of inner iterations, is
\begin{equation}
\sum_{k=0}^{\sqrt{L/\mu}\log\frac{1}{\epsilon}}\hspace*{-0.09cm}O\hspace*{-0.09cm}\left(\hspace*{-0.09cm} k\sqrt{\frac{\mu}{L(1\hspace*{-0.09cm}-\hspace*{-0.09cm}\sigma_2(\W))}}\hspace*{-0.09cm}\right)\hspace*{-0.09cm}=\hspace*{-0.09cm}O\hspace*{-0.09cm}\left(\hspace*{-0.09cm}\sqrt{\frac{L}{\mu(1\hspace*{-0.09cm}-\hspace*{-0.09cm}\sigma_2(\W))}}\log^2\frac{1}{\epsilon}\hspace*{-0.09cm}\right)\hspace*{-0.1cm}. \notag
\end{equation}
The proof is complete.
\hfill$\Box$\end{proof}

Similar to the proof of Theorem \ref{theorem_D_NC_sc}, we can also prove Theorem \ref{theorem_D_NC_ns} for the nonstrongly case.
\begin{proof}
Similar to the above proof of Theorem \ref{theorem_D_NC_sc} and given the similar $T_k$ replacing $C_4$ by $C_9$, we know that $\|\Pi\z^k\|_F$ is also bounded for all $k$. Let $\beta_0\geq L+L\|\nabla f(\x^*)\|_F^2$, and assume $L\geq 1$ and $\|\nabla f(\x^*)\|_F\geq 1$ for simplicity. Using the constants in (\ref{constantR}), we know $C_7=O(mLR_1^2)$, $C_8=O(\sqrt{m}R_1)$, $C_9=O(mR_1^2)$, and $C_{10}=O(mLR_1^2)$. Let $\epsilon=\frac{LR_1^2}{(K+2)^2}$. From Theorem \ref{theorem_DNC_ns}, we know that Algorithm \ref{D-NC} needs $O\left(\sqrt{\frac{L}{\epsilon}}\right)$ gradient computations such that $\frac{1}{m}\left(f\left(\alpha(\x^{K+1})\right)-f(\x^*)\right)\leq\epsilon$ and $\frac{1}{m}\|\Pi\x^{K+1}\|_F^2\leq\epsilon^2$, i.e., (\ref{cont14}) holds. From (\ref{tk}), we have
\begin{equation}
T_k\hspace*{-0.05cm}=\hspace*{-0.05cm}O\hspace*{-0.05cm}\left(\hspace*{-0.05cm}\frac{\log(k+1)^8}{-\hspace*{-0.05cm}\log\hspace*{-0.05cm}\left(\hspace*{-0.05cm}1\hspace*{-0.05cm}-\hspace*{-0.05cm}\sqrt{1\hspace*{-0.05cm}-\hspace*{-0.05cm}\sigma_2(\W)}\hspace*{-0.05cm}\right)}\hspace*{-0.05cm}\right)\hspace*{-0.05cm}=\hspace*{-0.05cm}O\hspace*{-0.05cm}\left(\hspace*{-0.05cm}\frac{\log k}{\sqrt{1\hspace*{-0.05cm}-\hspace*{-0.05cm}\sigma_2(\W)}}\hspace*{-0.05cm}\right)\hspace*{-0.05cm},\notag
\end{equation}
Thus, the total number of communications is
\begin{equation}
\sum_{k=0}^KT_k\hspace*{-0.075cm}=\hspace*{-0.075cm}\sum_{k=0}^{\sqrt{L/\epsilon}}\hspace*{-0.075cm}O\hspace*{-0.075cm}\left(\hspace*{-0.075cm}\frac{ \log k}{\sqrt{1\hspace*{-0.075cm}-\hspace*{-0.075cm}\sigma_2(\W)}}\hspace*{-0.075cm}\right)\hspace*{-0.075cm}=\hspace*{-0.075cm}O\hspace*{-0.075cm}\left(\hspace*{-0.075cm}\sqrt{\frac{L}{\epsilon(1-\sigma_2(\W))}}\log\frac{1}{\epsilon}\hspace*{-0.075cm}\right)\hspace*{-0.075cm}. \notag
\end{equation}
The proof is complete.
\hfill$\Box$\end{proof}

\subsection{Complexity Analysis for Algorithm \ref{D-NG}}\label{section:dng}
Now we prove Theorem \ref{the1}. Similar to Section \ref{section:dnc}, we define
\begin{equation}
\rho_{k+1}=F(\x^{k+1})-F(\x^*)+\<\lambda^*,\U\x^{k+1}\>,\notag
\end{equation}
where $(\x^*,\lambda^*)$ is a pair of KKT points of saddle point problem $\min_{\x}\max_{\lambda} F(\x)+\<\lambda,\U\x\>$ satisfying Lemma \ref{lemma02}. Define
\begin{equation}
\w^{k+1}\equiv\frac{\x^{k+1}}{\theta_k}-\frac{1-\theta_k}{\theta_k}\x^k\mbox{ for any }k\geq 0\mbox{ and }\w^0=\x^0.\notag
\end{equation}
From the definitions of $\w^{k+1}$ and $\y^k$ in (\ref{apgns1}), we have the following easy-to-identify identities.
\begin{lemma}\label{lemma06}
For procedure (\ref{apgns1})-(\ref{apgns3}), we have
\begin{eqnarray}
\begin{aligned}\label{iden_dng}
&\theta_k\x^*+(1-\theta_k)\x^k-\y^k=\theta_k\left(\x^*-\w^k\right),\\
&\theta_k\x^*+(1-\theta_k)\x^k-\x ^{k+1}=\theta_k\left(\x^*-\w^{k+1}\right).
\end{aligned}
\end{eqnarray}
\end{lemma}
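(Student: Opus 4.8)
The plan is to prove both identities in (\ref{iden_dng}) by direct substitution of the definitions of $\w^k$, $\w^{k+1}$, and $\y^k$; note that nothing about the sequence $\{\theta_k\}$ is used other than $\theta_k>0$, so the identities hold for the generic procedure (\ref{apgns1})-(\ref{apgns3}) irrespective of the specific choice of $\{\theta_k\}$ made in Theorem \ref{the1}. This is the nonsmooth analogue of Lemma \ref{lemma08}.

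First I would dispose of the second identity, which is merely a rewriting of the definition of $\w^{k+1}$. From $\w^{k+1}=\frac{\x^{k+1}}{\theta_k}-\frac{1-\theta_k}{\theta_k}\x^k$ we get $\x^{k+1}=\theta_k\w^{k+1}+(1-\theta_k)\x^k$; moving $\x^{k+1}$ to the left and adding $\theta_k\x^*$ to both sides gives exactly $\theta_k\x^*+(1-\theta_k)\x^k-\x^{k+1}=\theta_k(\x^*-\w^{k+1})$.

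For the first identity it suffices to establish $\y^k=(1-\theta_k)\x^k+\theta_k\w^k$, since this is equivalent (after subtracting $\theta_k\x^*$ from both sides of the claimed identity) to $(1-\theta_k)\x^k-\y^k=-\theta_k\w^k$. For $k\geq 1$ I would plug in $\w^k=\frac{\x^k}{\theta_{k-1}}-\frac{1-\theta_{k-1}}{\theta_{k-1}}\x^{k-1}$ (the definition of $\w$ with index shifted down by one), obtaining $(1-\theta_k)\x^k+\theta_k\w^k=\left(1-\theta_k+\frac{\theta_k}{\theta_{k-1}}\right)\x^k-\frac{\theta_k(1-\theta_{k-1})}{\theta_{k-1}}\x^{k-1}$; on the other side, expanding the definition (\ref{apgns1}) of $\y^k$ gives $\y^k=\left(1+\frac{\theta_k(1-\theta_{k-1})}{\theta_{k-1}}\right)\x^k-\frac{\theta_k(1-\theta_{k-1})}{\theta_{k-1}}\x^{k-1}$. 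The coefficients of $\x^{k-1}$ are identical, and the coefficients of $\x^k$ agree because $1+\frac{\theta_k(1-\theta_{k-1})}{\theta_{k-1}}=1+\frac{\theta_k}{\theta_{k-1}}-\theta_k$. For the base case $k=0$ I would use the convention $\w^0=\x^0$ together with the initialization $\x^0=\x^{-1}$, which makes the extrapolation term in (\ref{apgns1}) vanish so that $\y^0=\x^0=(1-\theta_0)\x^0+\theta_0\w^0$, completing the proof.

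There is essentially no obstacle here: this is a bookkeeping lemma, and the only points requiring care are matching the index shift between $\w^k$ and $\w^{k+1}$ and treating the $k=0$ boundary case with the stated convention.
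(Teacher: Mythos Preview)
Your proposal is correct and matches the paper's approach: the paper does not spell out a proof for Lemma \ref{lemma06} but simply declares the identities ``easy-to-identify'' from the definitions of $\w^{k+1}$ and $\y^k$ in (\ref{apgns1}), and your direct substitution is exactly the verification intended. Your handling of the $k=0$ case via the initialization $\x^0=\x^{-1}$ is the right way to avoid any ambiguity about $\theta_{-1}$.
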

We use the same notations of $\rho_{k+1}$ and $\w^k$ with Section \ref{section:dnc} for easy analogy. Different from Section \ref{section:dnc}, we define a new variable
\begin{equation}
\v^{k,t}\equiv\frac{\z^{k,t}}{\theta_k}-\frac{1-\theta_k}{\theta_k}\x^k. \label{def_v}
\end{equation}
The proof of Theorem \ref{the1} is based on the following Lyapunov function
\begin{eqnarray}
\begin{aligned}\notag
&\ell_{k+1}=\frac{\rho_{k+1}}{\theta_k}+\frac{1}{2\beta_0}\left\|\frac{\beta_0}{\vartheta_k}\U\x^{k+1}-\lambda^*\right\|_F^2\\
&+\hspace*{-0.08cm}\left(\hspace*{-0.08cm}\frac{L\theta_{k+1}}{2}\hspace*{-0.08cm}+\hspace*{-0.08cm}\frac{\beta_0}{2}\hspace*{-0.08cm}\right)\hspace*{-0.08cm}\|\w^{k+1}\hspace*{-0.08cm}-\hspace*{-0.08cm}\x^*\|_F^2\hspace*{-0.08cm}+\hspace*{-0.08cm}\frac{M}{2\hspace*{-0.08cm}\sqrt{1\hspace*{-0.08cm}-\hspace*{-0.08cm}\sigma_2(\W)}}\|\v^{k+1,0}\hspace*{-0.08cm}-\hspace*{-0.08cm}\x^*\|_F^2.
\end{aligned}
\end{eqnarray}
Analogy to Lemma \ref{lemma0}, we give the following lemma, which describes a progress in one iteration of Algorithm \ref{D-NG}.
\begin{lemma}\label{lemma09}
Assume that Assumptions \ref{assumption_f}, \ref{assumption_h} and \ref{assumption_w} hold with $\mu=0$. Let sequences $\{\theta_k\}$ and $\{\vartheta_k\}$ satisfy $\theta_0=1$, $\frac{1-\theta_k}{\theta_k}=\frac{1}{\theta_{k-1}}$, and $\vartheta_k=\theta_k$. Assume the following equation holds
\begin{equation}\label{Tk_equ}
\frac{\theta_k}{\eta_kT_k}=\frac{M}{\sqrt{1-\sigma_2(\W)}}.
\end{equation}
Then, for Algorithm \ref{D-NG}, we have
\begin{equation}\label{ell_equ}
\ell_{k+1}\leq \ell_k+\frac{mM^2\eta_k}{2\theta_k}.
\end{equation}
\end{lemma}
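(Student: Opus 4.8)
The plan is to mimic the one-step analysis of Lemma \ref{lemma0}, but with two complications absorbed into the Lyapunov function: the nonsmooth term $h(\x)$ is handled not by an inexact prox but by running $T_k$ steps of a subgradient-type descent, and the penalized smooth part now uses $\U$ instead of $\Pi$ so that $\frac{\beta_0}{\vartheta_k}\U^2\y^k$ can be computed by gossip. First I would open up one inner iteration: from the optimality condition of the $\z^{k,t+1}$ subproblem, use the three-point identity in Lemma \ref{lemma01} to write, for every comparison point $\z$,
\begin{eqnarray}
\begin{aligned}\notag
&\<\hat\nabla h(\z^{k,t})+\s^k,\z^{k,t+1}-\z\>+\left(\frac{L}{2}+\frac{\beta_0}{2\vartheta_k}\right)\left(\|\z^{k,t+1}-\y^k\|_F^2-\|\z-\y^k\|_F^2\right)\\
&\leq \frac{1}{2\eta_k}\left(\|\z-\z^{k,t}\|_F^2-\|\z-\z^{k,t+1}\|_F^2-\|\z^{k,t+1}-\z^{k,t}\|_F^2\right).
\end{aligned}
\end{eqnarray}
Then I would replace $\<\hat\nabla h(\z^{k,t}),\z^{k,t}-\z\>$ by $h(\z^{k,t})-h(\z)$ using convexity of $h$, pay the error term $\<\hat\nabla h(\z^{k,t}),\z^{k,t+1}-\z^{k,t}\>$ which is bounded by $\frac{\eta_k}{2}\|\hat\nabla h(\z^{k,t})\|_F^2+\frac{1}{2\eta_k}\|\z^{k,t+1}-\z^{k,t}\|_F^2\le \frac{\eta_k mM^2}{2}+\frac{1}{2\eta_k}\|\z^{k,t+1}-\z^{k,t}\|_F^2$ (using that $h$ is $(\sqrt mM)$-Lipschitz), so the last negative square cancels and the $mM^2\eta_k$ term appears. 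Next I would handle the linear term $\<\s^k,\z^{k,t+1}-\z\>=\<\nabla f(\y^k)+\frac{\beta_0}{\vartheta_k}\U^2\y^k,\z^{k,t+1}-\z\>$ exactly as in Lemma \ref{lemma0}: $\<\nabla f(\y^k),\cdot\>$ combines with the smoothness/convexity inequality for $f$ and the quadratic $\frac{L}{2}\|\cdot-\y^k\|_F^2$ term to give $f(\z^{k,t+1})-f(\z)$ up to the usual descent slack, while $\frac{\beta_0}{\vartheta_k}\<\U^2\y^k,\z^{k,t+1}-\z\>$ together with $\frac{\beta_0}{2\vartheta_k}\|\cdot-\y^k\|_F^2$ is exactly the (exact) prox-gradient step for the penalty $\frac{\beta_0}{2\vartheta_k}\|\U\x\|_F^2$, producing $\frac{\beta_0}{2\vartheta_k}\|\U\z^{k,t+1}\|_F^2-\frac{\beta_0}{2\vartheta_k}\|\U\z\|_F^2$ via the cocoercivity-style manipulation of Lemma \ref{lemma01}.

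Second, I would sum the inner inequality over $t=0,\dots,T_k-1$. The telescoping $\frac{1}{2\eta_k}$ terms collapse to $\frac{1}{2\eta_k}(\|\z-\z^{k,0}\|_F^2-\|\z-\z^{k,T_k}\|_F^2)$; by convexity of $F$ and of $x\mapsto\|\U x\|_F^2$ together with the definition $\x^{k+1}=\frac1{T_k}\sum_{t}\z^{k,t+1}$, the averaged left-hand side dominates $T_k\big(F(\x^{k+1})-F(\z)+\<\s^k_{\mathrm{lin}},\x^{k+1}-\z\>+\dots\big)$ in the right direction, and $\sum_t \frac{\eta_k mM^2}{2}=\frac{T_k\eta_k mM^2}{2}$. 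Dividing through by $T_k$ and invoking the key calibration \eqref{Tk_equ}, i.e. $\frac{1}{\eta_kT_k}=\frac{M}{\theta_k\sqrt{1-\sigma_2(\W)}}$, turns the residual $\frac{1}{2\eta_kT_k}\|\z-\z^{k,0}\|_F^2$ into the $\frac{M}{2\sqrt{1-\sigma_2(\W)}}$-weighted proximal term appearing in $\ell_k$, and the $\frac{\eta_k mM^2}{2}$ slack becomes $\frac{mM^2\eta_k}{2\theta_k}$ after the later division by $\theta_k$ — this is where the claimed additive error is born. I would also rewrite everything in terms of $\v^{k,t}$ defined in \eqref{def_v}: since $\x^{k+1}=\theta_k\v^{k,T_k}+(1-\theta_k)\x^k$ and $\z^{k,0}=\z^{k-1,T_{k-1}}$ gives $\v^{k,0}=\v^{k-1,T_{k-1}}$ up to the $\x^k$ vs $\x^{k-1}$ bookkeeping, the $\|\z-\z^{k,0}\|_F^2$ and $\|\z-\z^{k,T_k}\|_F^2$ terms become $\theta_k^2\|\v^{k,0}-\x^*\|_F^2$ and $\theta_k^2\|\v^{k+1,0}-\x^*\|_F^2$ when $\z=\x^*$.

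Third, I would run the accelerated-gradient telescoping exactly as in Lemma \ref{lemma0}: apply the summed/averaged inequality once with $\z=\x^k$ and once with $\z=\x^*$, take the convex combination with weights $1-\theta_k$ and $\theta_k$, add $\<\lambda^*,\U\x^{k+1}-(1-\theta_k)\U\x^k\>$ to both sides, use $\U\x^*=\bzero$ and the relation $\frac{1-\theta_k}{\vartheta_k}=\frac{1}{\vartheta_{k-1}}$ (which holds since $\vartheta_k=\theta_k$ and $\frac{1-\theta_k}{\theta_k}=\frac{1}{\theta_{k-1}}$) to convert $\frac{\beta_0}{\vartheta_k}\U\x^{k+1}-\lambda^*$-type squared terms into the form in $\ell_{k+1}$, and apply Lemma \ref{lemma06} to rewrite $\|(1-\theta_k)\x^k+\theta_k\x^*-\y^k\|_F^2=\theta_k^2\|\w^k-\x^*\|_F^2$ and likewise for $\w^{k+1}$. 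Collecting the $f$-quadratics with coefficient $\frac{L\theta_k}{2}$ and the penalty quadratics with coefficient $\frac{\beta_0}{2}$ into the mixed $\left(\frac{L\theta_{k+1}}{2}+\frac{\beta_0}{2}\right)\|\w^{k+1}-\x^*\|_F^2$ term (using $\theta_{k+1}\le\theta_k$ to drop slack) and dividing by $\theta_k$ yields exactly \eqref{ell_equ}.

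The main obstacle I expect is the careful matching of the inner-loop output to the outer Lyapunov function: tracking how $\x^{k+1}=\frac1{T_k}\sum_t\z^{k,t+1}$ relates through $\theta_k$ to $\v^{k,T_k}$, making the warm-start identity $\z^{k,0}=\z^{k-1,T_{k-1}}$ line up with $\v^{k,0}=\v^{k,0}$ across iterations so that the proximal term $\frac{M}{2\sqrt{1-\sigma_2(\W)}}\|\v^{k,0}-\x^*\|_F^2$ genuinely telescopes, and verifying that the single step size $\eta_k$ and sliding length $T_k$ tied by \eqref{Tk_equ} simultaneously (i) absorb the subgradient error at rate $\frac{mM^2\eta_k}{2\theta_k}$ and (ii) leave no leftover positive term from the $\frac{1}{2\eta_kT_k}\|\cdot\|_F^2$ residual. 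Everything else is the by-now-standard inexact-APG bookkeeping already carried out for Lemma \ref{lemma0}.
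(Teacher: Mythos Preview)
Your overall strategy is sound, but step~3 contains a genuine gap. You plan to apply the summed inner-loop inequality separately with $\z=\x^k$ and $\z=\x^*$ and then take the $(1-\theta_k,\theta_k)$ convex combination, ``as in Lemma~\ref{lemma0}''. The reason this works in Lemma~\ref{lemma0} is that (\ref{cont5})--(\ref{cont6}) are \emph{linear} in the comparison point $\x$ once $\mu=0$. Your inner-loop three-point inequality, however, carries two quadratic-in-$\z$ terms with positive sign on the right: $\bigl(\tfrac{L}{2}+\tfrac{\beta_0}{2\vartheta_k}\bigr)\|\z-\y^k\|_F^2$ and $\tfrac{1}{2\eta_kT_k}\|\z-\z^{k,0}\|_F^2$. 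Under convex combination Jensen goes the wrong way: $(1-\theta_k)\|\x^k-\y^k\|_F^2+\theta_k\|\x^*-\y^k\|_F^2$ exceeds $\|\widetilde\x^{k,*}-\y^k\|_F^2=\theta_k^2\|\w^k-\x^*\|_F^2$ by exactly $\theta_k(1-\theta_k)\|\x^k-\x^*\|_F^2$. After dividing by $\theta_k$ this leaves a residual of order $\tfrac{\beta_0}{2\theta_k}\|\x^k-\x^*\|_F^2$, which is not part of $\ell_k$ and does not telescope. The same Jensen loss hits the $\tfrac{1}{2\eta_kT_k}$ prox term, so the $\|\v^{k,0}-\x^*\|_F^2$ piece of $\ell_k$ will not emerge with the right coefficient either.

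The paper avoids this by never splitting the comparison point at the inner-loop level. Lemma~\ref{lemma10} first performs the convex combination on the $f$-part alone (where everything is linear in $\x$) and packages the target into the single point $\widetilde\x^{k,*}=(1-\theta_k)\x^k+\theta_k\x^*$; the entire inner-loop bound is then derived with $\z=\widetilde\x^{k,*}$ directly, so the quadratic terms become $\|\y^k-\widetilde\x^{k,*}\|_F^2=\theta_k^2\|\w^k-\x^*\|_F^2$ and $\|\z^{k,0}-\widetilde\x^{k,*}\|_F^2=\theta_k^2\|\v^{k,0}-\x^*\|_F^2$ with no Jensen loss. A secondary wrinkle: your Young-inequality treatment of the subgradient error yields $h(\z^{k,t})$ on the left rather than $h(\z^{k,t+1})$, so after averaging you control $h$ at $\tfrac{1}{T_k}\sum_{t=0}^{T_k-1}\z^{k,t}$ rather than at $\x^{k+1}=\tfrac{1}{T_k}\sum_{t=1}^{T_k}\z^{k,t}$. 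The paper instead invokes the Lipschitz continuity of $h$ (inequality~(\ref{cont8})) to keep $h(\z^{k,t+1})$ and then uses $\sqrt{m}M\|\z^{k,t+1}-\z^{k,t}\|_F-\tfrac{1}{2\eta_k}\|\z^{k,t+1}-\z^{k,t}\|_F^2\le\tfrac{mM^2\eta_k}{2}$.
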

The proof of Lemma \ref{lemma09} is based on the following lemma.
\begin{lemma}\label{lemma10}
Assume that Assumptions \ref{assumption_f} and \ref{assumption_w} hold. Define $\widetilde\x^{k,*}=(1-\theta_k)\x^k+\theta_k\x^*$. Then, for Algorithm \ref{D-NG}, we have
\begin{eqnarray}
\begin{aligned}\notag
&\rho_{k+1}-(1-\theta_k)\rho_k\\
\leq& \<\nabla f(\y^k),\x^{k+1}-\widetilde\x^{k,*}\>+\frac{L}{2}\|\x^{k+1}-\y^k\|_F^2\\
&+h(\x^{k+1})-h(\widetilde\x^{k,*})+\<\lambda^*,\U\x^{k+1}-\U\widetilde\x^{k,*}\>.
\end{aligned}
\end{eqnarray}
\end{lemma}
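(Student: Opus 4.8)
The plan is to combine the $L$-smoothness of the aggregate $f$ with the convexity of $f$ and of $h$, evaluated at the two anchor points $\x^k$ and $\x^*$, and then to absorb the linear Lagrangian terms using the fact that $\U$ annihilates the consensus subspace containing $\x^*$. First I would apply $L$-smoothness of $f$ at the extrapolation point $\y^k$ to write $f(\x^{k+1})\le f(\y^k)+\<\nabla f(\y^k),\x^{k+1}-\y^k\>+\tfrac{L}{2}\|\x^{k+1}-\y^k\|_F^2$. Next I would use convexity of $f$ in the form $f(\y^k)\le f(\x)-\<\nabla f(\y^k),\x-\y^k\>$ at $\x=\x^k$ and at $\x=\x^*$, take the convex combination with weights $1-\theta_k$ and $\theta_k$, and recall that $\widetilde\x^{k,*}=(1-\theta_k)\x^k+\theta_k\x^*$, to obtain $f(\y^k)\le(1-\theta_k)f(\x^k)+\theta_kf(\x^*)-\<\nabla f(\y^k),\widetilde\x^{k,*}-\y^k\>$. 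Substituting this into the smoothness inequality, the $\y^k$-terms collapse and give
\[
f(\x^{k+1})\le(1-\theta_k)f(\x^k)+\theta_kf(\x^*)+\<\nabla f(\y^k),\x^{k+1}-\widetilde\x^{k,*}\>+\tfrac{L}{2}\|\x^{k+1}-\y^k\|_F^2 .
\]

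For the nonsmooth part, convexity of $h$ gives $h(\widetilde\x^{k,*})\le(1-\theta_k)h(\x^k)+\theta_kh(\x^*)$, which I would rearrange as $h(\x^{k+1})\le(1-\theta_k)h(\x^k)+\theta_kh(\x^*)+\bigl(h(\x^{k+1})-h(\widetilde\x^{k,*})\bigr)$. Adding this to the $f$-bound and using $F=f+h$ yields
\[
F(\x^{k+1})\le(1-\theta_k)F(\x^k)+\theta_kF(\x^*)+\<\nabla f(\y^k),\x^{k+1}-\widetilde\x^{k,*}\>+\tfrac{L}{2}\|\x^{k+1}-\y^k\|_F^2+h(\x^{k+1})-h(\widetilde\x^{k,*}).
\]
Then, starting from the definition $\rho_{k+1}=F(\x^{k+1})-F(\x^*)+\<\lambda^*,\U\x^{k+1}\>$, I would compute $\rho_{k+1}-(1-\theta_k)\rho_k=F(\x^{k+1})-(1-\theta_k)F(\x^k)-\theta_kF(\x^*)+\<\lambda^*,\U\x^{k+1}-(1-\theta_k)\U\x^k\>$ and plug in the displayed bound on $F(\x^{k+1})$. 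The final step is to note that $\x^*=\1(x^*)^T$ and Assumption \ref{assumption_w} imply $\U\x^*=\0$, hence $\U\widetilde\x^{k,*}=(1-\theta_k)\U\x^k+\theta_k\U\x^*=(1-\theta_k)\U\x^k$, so that the Lagrangian term is exactly $\<\lambda^*,\U\x^{k+1}-\U\widetilde\x^{k,*}\>$; this produces the claimed inequality.

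This lemma is essentially bookkeeping and I do not expect a genuine obstacle. The only point requiring care is keeping the convex-combination weights $1-\theta_k$ and $\theta_k$ consistent across the smooth term, the nonsmooth term, and the linear constraint term, so that the leftover pieces collapse cleanly to $h(\x^{k+1})-h(\widetilde\x^{k,*})$ and $\<\lambda^*,\U(\x^{k+1}-\widetilde\x^{k,*})\>$ rather than more complicated residuals; remembering that $\U$ kills $\x^*$ is what makes the constraint term take the stated symmetric form.
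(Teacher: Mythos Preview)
Your proposal is correct and follows essentially the same route as the paper: both combine $L$-smoothness of $f$ at $\y^k$ with convexity of $f$ at the two anchors $\x^k,\x^*$ taken in the $(1-\theta_k,\theta_k)$ combination, then add the convexity bound $h(\widetilde\x^{k,*})\le(1-\theta_k)h(\x^k)+\theta_kh(\x^*)$ and use $\U\x^*=\0$ to rewrite the Lagrangian term. The only cosmetic difference is that the paper merges smoothness and convexity into a single inequality before taking the convex combination, whereas you apply them in two separate steps.
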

\begin{proof}
From (\ref{cont5}) with $\mu=0$, we have
\begin{eqnarray}
\begin{aligned}\notag
&f(\x^{k+1})\leq f(\x)+\<\nabla f(\y^k),\x^{k+1}-\x\>+\frac{L}{2}\|\x^{k+1}-\y^k\|_F^2.\notag
\end{aligned}
\end{eqnarray}
Firstly let $\x=\x^k$ and then $\x=\x^*$, we obtain two inequalities. Multiplying the first inequality by $(1-\theta_k)$, multiplying the second by $\theta_k$, and adding them together, we have
\begin{eqnarray}
\begin{aligned}\notag
&f(\x^{k+1})-(1-\theta_k)f(\x^k)-\theta_kf(\x^*)\\
\leq& \<\nabla f(\y^k),\x^{k+1}-(1-\theta_k)\x^k-\theta_k\x^*\>+\frac{L}{2}\|\x^{k+1}-\y^k\|_F^2.
\end{aligned}
\end{eqnarray}
Adding $h(\x^{k+1})-(1-\theta_k)h(\x^k)-\theta_kh(\x^*)+\<\lambda^*,\U\x^{k+1}-(1-\theta_k)\U\x^k\>$ to both sides, and using the definition of $\rho_k$, we have
\begin{eqnarray}
\begin{aligned}\label{cont7}
&\rho_{k+1}-(1-\theta_k)\rho_k\\
=&F(\x^{k+1})-(1-\theta_k)F(\x^k)-\theta_kF(\x^*)\\
&+\<\lambda^*,\U\x^{k+1}-(1-\theta_k)\U\x^k\>\\
\leq& \<\nabla f(\y^k),\x^{k+1}\hspace*{-0.05cm}-\hspace*{-0.05cm}(1\hspace*{-0.05cm}-\hspace*{-0.05cm}\theta_k)\x^k\hspace*{-0.05cm}-\hspace*{-0.05cm}\theta_k\x^*\>\hspace*{-0.05cm}+\hspace*{-0.05cm}\frac{L}{2}\|\x^{k+1}\hspace*{-0.05cm}-\hspace*{-0.05cm}\y^k\|_F^2\\
&+h(\x^{k+1})-(1-\theta_k)h(\x^k)-\theta_kh(\x^*)\\
&+\<\lambda^*,\U\x^{k+1}-(1-\theta_k)\U\x^k\>.
\end{aligned}
\end{eqnarray}
From the definition of $\widetilde\x^{k,*}$, $\U\x^*=\0$, and the convexity of $h(\x)$, we have
\begin{eqnarray}
\begin{aligned}\notag
&\x^{k+1}-\widetilde\x^{k,*}=\x^{k+1}-(1-\theta_k)\x^k-\theta_k\x^*,\\
&\U\x^{k+1}-\U\widetilde\x^{k,*}=\U\x^{k+1}-(1-\theta_k)\U\x^k,\\
&h(\widetilde\x^{k,*})\leq (1-\theta_k)h(\x^k)+\theta_kh(\x^*).
\end{aligned}
\end{eqnarray}
Plugging them into (\ref{cont7}), we have the conclusion.
\hfill$\Box$\end{proof}

Now, we give the proof of Lemma \ref{lemma09}.
\begin{proof}
From the fact that $h(\x)$ is $(\sqrt{m}M)$-Lipchitz continuous derived by Assumption \ref{assumption_h}, similar to the induction in (\ref{cont5}), we have
\begin{eqnarray}
\begin{aligned}\label{cont8}
&h(\z^{k,t+1})\\
\leq& h(\z^{k,t})\hspace*{-0.08cm}+\hspace*{-0.08cm}\<\hat\nabla h(\z^{k,t}),\z^{k,t+1}\hspace*{-0.08cm}-\hspace*{-0.08cm}\z^{k,t}\>\hspace*{-0.08cm}+\hspace*{-0.08cm}\sqrt{m}M\|\z^{k,t+1}\hspace*{-0.08cm}-\hspace*{-0.08cm}\z^{k,t}\|_F\hspace*{-1cm}\\
=& h(\z^{k,t})\hspace*{-0.08cm}+\hspace*{-0.08cm}\<\hat\nabla h(\z^{k,t}),\widetilde\x^{k,*}\hspace*{-0.08cm}-\hspace*{-0.08cm}\z^{k,t}\>\\
&\hspace*{-0.08cm}+\hspace*{-0.08cm}\<\hat\nabla h(\z^{k,t}),\z^{k,t+1}\hspace*{-0.08cm}-\hspace*{-0.08cm}\widetilde\x^{k,*}\>\hspace*{-0.15cm}+\sqrt{m}M\|\z^{k,t+1}-\z^{k,t}\|_F\\
\leq& h(\widetilde\x^{k,*})\hspace*{-0.08cm}+\hspace*{-0.08cm}\<\hat\nabla h(\z^{k,t}),\z^{k,t+1}\hspace*{-0.08cm}-\hspace*{-0.08cm}\widetilde\x^{k,*}\>\hspace*{-0.08cm}+\hspace*{-0.08cm}\sqrt{m}M\|\z^{k,t+1}\hspace*{-0.08cm}-\hspace*{-0.08cm}\z^{k,t}\|_F,\hspace*{-1cm}
\end{aligned}
\end{eqnarray}
where $\widetilde\x^{k,*}$ is defined in Lemma \ref{lemma10} and $\hat\nabla h(\z^{k,t})\in\partial h(\z^{k,t})$. On the other hand, from the update role of $\z^{k,t+1}$ in Algorithm \ref{D-NG}, we have
\begin{eqnarray}
\begin{aligned}\label{cont9}
0=&\hat\nabla h(\z^{k,t})+\nabla f(\y^k)+\frac{\beta_0}{\vartheta_k}\U^2\y^k\\
&+\left(L+\frac{\beta_0}{\vartheta_k}\right)(\z^{k,t+1}-\y^k)+\frac{1}{\eta_k}(\z^{k,t+1}-\z^{k,t}).
\end{aligned}
\end{eqnarray}
Thus, we have
\begin{eqnarray}
\begin{aligned}\notag
&\<\nabla  f(\y^k),\z^{k,t+1}-\widetilde\x^{k,*}\>+\frac{L}{2}\|\z^{k,t+1}-\y^k\|_F^2\\
&+h(\z^{k,t+1})-h(\widetilde\x^{k,*})+\<\lambda^*,\U\z^{k,t+1}-\U\widetilde\x^{k,*}\>\\
\overset{a}\leq&\<\nabla f(\y^k)\hspace*{-0.08cm}+\hspace*{-0.08cm}\hat\nabla h(\z^{k,t}),\z^{k,t+1}\hspace*{-0.08cm}-\hspace*{-0.08cm}\widetilde\x^{k,*}\>\hspace*{-0.08cm}+\hspace*{-0.08cm}\<\lambda^*,\U\z^{k,t+1}\hspace*{-0.08cm}-\hspace*{-0.08cm}\U\widetilde\x^{k,*}\>\\
&+\sqrt{m}M\|\z^{k,t+1}-\z^{k,t}\|_F+\frac{L}{2}\|\z^{k,t+1}-\y^k\|_F^2\\
\overset{b}=&-\<\frac{\beta_0}{\vartheta_k}\U^2\y^k+\left(L+\frac{\beta_0}{\vartheta_k}\right)(\z^{k,t+1}-\y^k)\right.\\
&\hspace*{1.95cm}\left.+\frac{1}{\eta_k}(\z^{k,t+1}-\z^{k,t}),\z^{k,t+1}-\widetilde\x^{k,*}\>\\
&+\<\lambda^*,\U\z^{k,t+1}-\U\widetilde\x^{k,*}\>+\sqrt{m}M\|\z^{k,t+1}-\z^{k,t}\|_F\\
&+\frac{L}{2}\|\z^{k,t+1}-\y^k\|_F^2\\
=&-\<\frac{\beta_0}{\vartheta_k}\U\y^k-\lambda^*,\U\z^{k,t+1}-\U\widetilde\x^{k,*}\>\\
&-\left(L+\frac{\beta_0}{\vartheta_k}\right)\<\z^{k,t+1}-\y^k,\y^k-\widetilde\x^{k,*}\>\\
&-\frac{1}{\eta_k}\<\z^{k,t+1}-\z^{k,t},\z^{k,t+1}-\widetilde\x^{k,*}\>\\
&+\sqrt{m}M\|\z^{k,t+1}-\z^{k,t}\|_F-\left(\frac{L}{2}+\frac{\beta_0}{\vartheta_k}\right)\|\z^{k,t+1}-\y^k\|_F^2\\
\overset{c}=&-\frac{\vartheta_k}{\beta_0}\<\frac{\beta_0}{\vartheta_k}\U\y^k-\lambda^*,\frac{\beta_0}{\vartheta_k}\U\z^{k,t+1}-\frac{\beta_0}{\vartheta_{k-1}}\U\x^k\>\\
&-\left(L+\frac{\beta_0}{\vartheta_k}\right)\<\z^{k,t+1}-\y^k,\y^k-(1-\theta_k)\x^k-\theta_k\x^*\>\\
&-\frac{1}{\eta_k}\<\z^{k,t+1}-\z^{k,t},\z^{k,t+1}-(1-\theta_k)\x^k-\theta_k\x^*\>\\
&+\sqrt{m}M\|\z^{k,t+1}-\z^{k,t}\|_F-\left(\frac{L}{2}+\frac{\beta_0}{\vartheta_k}\right)\|\z^{k,t+1}-\y^k\|_F^2,
\end{aligned}
\end{eqnarray}
where we use (\ref{cont8}) in $\overset{a}\leq$, (\ref{cont9}) in $\overset{b}=$, $\frac{1}{\vartheta_{k-1}}=\frac{1-\theta_k}{\vartheta_k}$, and the definition of $\widetilde\x^{k,*}$ in $\overset{c}=$. Applying the identities in Lemma \ref{lemma01} to the two inner products, using $\frac{\vartheta_k}{2\beta_0}\left\|\frac{\beta_0}{\vartheta_k}\U\y^k-\frac{\beta_0}{\vartheta_k}\U\z^{k,t+1}\right\|_F^2\leq \frac{\beta_0}{2\vartheta_k}\|\y^k-\z^{k,t+1}\|_F^2$ and dropping the negative term $-\frac{\vartheta_k}{2\beta_0}\left\|\frac{\beta_0}{\vartheta_k}\U\y^k-\frac{\beta_0}{\vartheta_{k-1}}\U\x^k\right\|_F^2$, we have
\begin{eqnarray}
\begin{aligned}\notag
&\<\nabla f(\y^k),\z^{k,t+1}-\widetilde\x^{k,*}\>+\frac{L}{2}\|\z^{k,t+1}-\y^k\|_F^2\\
&+h(\z^{k,t+1})-h(\widetilde\x^{k,*})+\<\lambda^*,\U\z^{k,t+1}-\U\widetilde\x^{k,*}\>\\
\leq&\frac{\vartheta_k}{2\beta_0}\left[ \left\|\frac{\beta_0}{\vartheta_{k-1}}\U\x^k-\lambda^*\right\|_F^2-\left\|\frac{\beta_0}{\vartheta_k}\U\z^{k,t+1}-\lambda^*\right\|_F^2\right]\\
&+\left(\frac{L}{2}+\frac{\beta_0}{2\vartheta_k}\right)\left[\|\y^k-(1-\theta_k)\x^k-\theta_k\x^*\|_F^2\right.\\
&\hspace*{2.5cm}\left.-\|\z ^{k,t+1}-(1-\theta_k)\x^k-\theta_k\x^*\|_F^2\right]\\
&+\frac{1}{2\eta_k}\left[ \|\z^{k,t}-(1-\theta_k)\x^k-\theta_k\x^*\|_F^2\right.\\
&\hspace*{1.2cm}\left. - \|\z^{k,t+1}-(1-\theta_k)\x^k-\theta_k\x^*\|_F^2 \right]\\
&+\sqrt{m}M\|\z^{k,t+1}-\z^{k,t}\|_F-\frac{1}{2\eta_k}\|\z^{k,t+1}-\z^{k,t}\|_F^2\\
\overset{d}\leq&\frac{\vartheta_k}{2\beta_0}\left[ \left\|\frac{\beta_0}{\vartheta_{k-1}}\U\x^k-\lambda^*\right\|_F^2-\left\|\frac{\beta_0}{\vartheta_k}\U\z^{k,t+1}-\lambda^*\right\|_F^2\right]\\
&+\left(\frac{L}{2}+\frac{\beta_0}{2\vartheta_k}\right)\left[\|\y^k-(1-\theta_k)\x^k-\theta_k\x^*\|_F^2\right.\\
&\hspace*{2.5cm}\left.-\|\z ^{k,t+1}-(1-\theta_k)\x^k-\theta_k\x^*\|_F^2\right]\\
&+\frac{1}{2\eta_k}\left[ \|\z^{k,t}-(1-\theta_k)\x^k-\theta_k\x^*\|_F^2\right.\\
&\hspace*{1.2cm}\left. - \|\z^{k,t+1}-(1-\theta_k)\x^k-\theta_k\x^*\|_F^2 \right]+\frac{mM^2\eta_k}{2},
\end{aligned}
\end{eqnarray}
where we use $-\frac{a}{2}t^2+bt\leq \frac{b^2}{2a}$ for any $a>0$ in $\overset{d}\leq$. Summing over $t=0,\cdots,T_k-1$ and dividing both sides by $T_k$, letting $\x^{k+1}=\frac{\sum_{t=0}^{T_k-1}\z^{k,t+1}}{T_k}$, and from the convexity of $h(\x)$ and $\|\cdot\|_F^2$, we have
\begin{eqnarray}
\begin{aligned}\notag
&\<\nabla f(\y^k),\x^{k+1}-\widetilde\x^{k,*}\>+\frac{L}{2}\|\x^{k+1}-\y^k\|_F^2\\
&+h(\x^{k+1})-h(\widetilde\x^{k,*})+\<\lambda^*,\U\x^{k+1}-\U\widetilde\x^{k,*}\>\\
\leq&\frac{\vartheta_k}{2\beta_0}\left[ \left\|\frac{\beta_0}{\vartheta_{k-1}}\U\x^k-\lambda^*\right\|_F^2-\left\|\frac{\beta_0}{\vartheta_k}\U\x^{k+1}-\lambda^*\right\|_F^2\right]\\
&+\left(\frac{L}{2}+\frac{\beta_0}{2\vartheta_k}\right)\left[\|\y^k-(1-\theta_k)\x^k-\theta_k\x^*\|_F^2\right.\\
&\hspace*{2.5cm}\left.-\|\x^{k+1}-(1-\theta_k)\x^k-\theta_k\x^*\|_F^2\right]\\
&+\frac{1}{2\eta_kT_k}\left[ \|\z^{k,0}-(1-\theta_k)\x^k-\theta_k\x^*\|_F^2\right.\\
&\hspace*{1.5cm}\left. - \|\z^{k,T_k}-(1-\theta_k)\x^k-\theta_k\x^*\|_F^2 \right]+\frac{mM^2\eta_k}{2}\\
\overset{e}=&\frac{\vartheta_k}{2\beta_0}\left[ \left\|\frac{\beta_0}{\vartheta_{k-1}}\U\x^k-\lambda^*\right\|_F^2-\left\|\frac{\beta_0}{\vartheta_k}\U\x^{k+1}-\lambda^*\right\|_F^2\right]\\
&+\left(\frac{L}{2}+\frac{\beta_0}{2\vartheta_k}\right)\theta_k^2\left[\|\w^k-\x^*\|_F^2-\|\w^{k+1}-\x^*\|_F^2\right]\\
&+\frac{\theta_k^2}{2\eta_kT_k}\left[ \|\v^{k,0}-\x^*\|_F^2 - \|\v^{k+1,0}-\x^*\|_F^2 \right]+\frac{mM^2\eta_k}{2},
\end{aligned}
\end{eqnarray}
where $\overset{e}=$ follows from the identities in Lemma \ref{iden_dng}, the definition of $\v^{k,t}$ in (\ref{def_v}), and $\z^{k+1,0}=\z^{k,T_k}$. Dividing both sides by $\theta_k$ and letting $\vartheta_k=\theta_k$, from Lemma \ref{lemma10}, $\frac{1}{\theta_{k-1}}=\frac{1-\theta_k}{\theta_k}$, $\frac{\theta_k}{2\eta_kT_k}=\frac{M}{2\sqrt{1-\sigma_2(\W)}}$, $\theta_{k+1}\leq\theta_k$, and the definition of $\ell_k$, we have the conclusion.
\hfill$\Box$\end{proof}

Based on Lemma \ref{lemma09}, we can prove Theorem \ref{the1}.
\begin{proof}
The settings of $T_k=K(1-\sigma_2(\W))$ and $\eta_k=\frac{\theta_k}{KM\sqrt{1-\sigma_2(\W)}}$ satisfy (\ref{Tk_equ}). Plugging them into (\ref{ell_equ}), we have
\begin{equation}
\ell_{k+1}\leq \ell_k+\frac{mM}{2K\sqrt{1-\sigma_2(\W)}}.\notag
\end{equation}
Summing over $k=0,\cdots,K-1$, we have
\begin{eqnarray}
\begin{aligned}\notag
\ell_{K}\leq& \ell_0+\frac{mM}{2\sqrt{1-\sigma_2(\W)}}\\
=&\frac{1}{2\beta_0}\|\lambda^*\|_F^2+\frac{L+\beta_0}{2}\|\x^0-\x^*\|_F^2\\
&+\frac{M}{2\sqrt{1-\sigma_2(\W)}}\|\x^0-\x^*\|_F^2+\frac{mM}{2\sqrt{1-\sigma_2(\W)}}\\
\equiv& C_{12},
\end{aligned}
\end{eqnarray}
where we use $\theta_0=1$, $\frac{1}{\theta_{-1}}=\frac{1-\theta_0}{\theta_0}=0$, $\w^0=\x^0$, and $\v^{0,0}=\x^0$. Similar to the proofs of Theorems \ref{theorem_DNC_sc} and \ref{theorem_DNC_ns}, from the definition of $\ell_k$ and $\theta_{k-1}=\frac{1}{k}$, we have
\begin{eqnarray}
\begin{aligned}\notag
&\|\U\x^K\|_F\leq \frac{1}{\beta_0K}\left(\sqrt{2\beta_0C_{12}}+\|\lambda^*\|_F\right),\\
&F(\x^K)-F(\x^*)\leq\frac{C_{12}}{K}+\|\lambda^*\|_F\|\U\x^K\|_F
\end{aligned}
\end{eqnarray}
and $\|\x^K-\x^*\|_F^2\leq \frac{2C_{12}}{\beta_0}$. Similar to (\ref{cont13}), we also have
\begin{eqnarray}
\begin{aligned}\notag
&F\left(\alpha(\x^K)\right)-F(\x^*)\\
\overset{a}\leq& \left(\|\nabla f(\x^*)\|+L\sqrt{\frac{2C_{12}}{\beta_0}}\right)\|\Pi\x^K\|_F+\frac{L}{2}\|\Pi\x^K\|_F^2\\
&+2\sqrt{m}M\|\Pi\x^K\|_F+F(\x^K)-F(\x^*),
\end{aligned}
\end{eqnarray}
where we use the fact that $h(\x)$ is $(\sqrt{m}M)$-Lipchitz continuous in $\overset{a}\leq$, i.e, $\|\widetilde\nabla h(\x)\|_F\leq\sqrt{m}M,\forall \widetilde\nabla h(\x)\in\partial h(\x)$. From Lemma \ref{lemma04}, we can further bound $\|\Pi\x^K\|_F$ by $\frac{\|\U\x^K\|_F}{\sqrt{1-\sigma_2(\W)}}$. From Lemma \ref{lemma02}, we know $\|\lambda^*\|_F\leq \frac{\sqrt{m}M+\|\nabla f(\x^*)\|_F}{\sqrt{1-\sigma_2(\W)}}\equiv\frac{1}{\chi}$. From the setting of $\beta_0$, we have $\beta_0\geq\frac{L}{\sqrt{1-\sigma_2(\W)}}\geq L$ and $\beta_0\geq\frac{M}{\sqrt{1-\sigma_2(\W)}}$. Combing with (\ref{constantR}) and $R_1\geq 1$, we have $\frac{1}{\chi}\leq \sqrt{m}\beta_0\left(R_1+\frac{R_2}{L}\right)$ and
\begin{eqnarray}
\begin{aligned}\notag
&C_{12}\leq \frac{1}{2\beta_0\chi^2}+\frac{3\beta_0mR_1^2}{2}+\frac{\beta_0 m}{2}\leq 2.5\beta_0m\left(R_1+\frac{R_2}{L}\right)^2,\\
&\|\U\x^K\|_F\hspace*{-0.05cm}\leq\hspace*{-0.05cm} \frac{1}K\hspace*{-0.05cm}\left(\hspace*{-0.05cm}\sqrt{5m}\hspace*{-0.05cm}\left(\hspace*{-0.05cm}R_1\hspace*{-0.05cm}+\hspace*{-0.05cm}\frac{R_2}{L}\hspace*{-0.05cm}\right)\hspace*{-0.05cm}+\hspace*{-0.05cm}\frac{1}{\chi \beta_0}\hspace*{-0.05cm} \right)\hspace*{-0.05cm}\leq\hspace*{-0.05cm} \frac{4\sqrt{m}}{K}\hspace*{-0.05cm}\left(\hspace*{-0.05cm}R_1\hspace*{-0.05cm}+\hspace*{-0.05cm}\frac{R_2}{L}\hspace*{-0.05cm}\right),\\
&\|\Pi\x^K\|_F\hspace*{-0.08cm}\leq\hspace*{-0.08cm} \frac{4\beta_0\sqrt{m}}{KL}\hspace*{-0.1cm}\left(\hspace*{-0.1cm}R_1\hspace*{-0.08cm}+\hspace*{-0.08cm}\frac{R_2}{L}\hspace*{-0.08cm}\right), \|\Pi\x^K\|_F\hspace*{-0.08cm}\leq\hspace*{-0.08cm} \frac{4\beta_0\sqrt{m}}{KM}\hspace*{-0.1cm}\left(\hspace*{-0.1cm}R_1\hspace*{-0.08cm}+\hspace*{-0.08cm}\frac{R_2}{L}\hspace*{-0.08cm}\right),\\
&F(\x^K)-F(\x^*)\leq  \frac{7\beta_0m}{K}\left(R_1+\frac{R_2}{L}\right)^2.
\end{aligned}
\end{eqnarray}
Thus, we further have
\begin{eqnarray}
\begin{aligned}\notag
&F\left(\alpha(\x^K)\right)-F(\x^*)\\
\leq& \left(\sqrt{m}R_2+L\left(R_1+\frac{R_2}{L}\right)\sqrt{5m}\right)\frac{4\beta_0\sqrt{m}}{KL}\left(R_1+\frac{R_2}{L}\right)\\
&+\frac{8\beta_0m}{K^2\hspace*{-0.05cm}\sqrt{1\hspace*{-0.05cm}-\hspace*{-0.05cm}\sigma_2(\W)}}\hspace*{-0.05cm}\left(\hspace*{-0.05cm}R_1\hspace*{-0.05cm}+\hspace*{-0.05cm}\frac{R_2}{L}\hspace*{-0.05cm}\right)^2\hspace*{-0.05cm}+\hspace*{-0.05cm}2\sqrt{m}\frac{4\beta_0\sqrt{m}}{K}\hspace*{-0.05cm}\left(\hspace*{-0.05cm}R_1\hspace*{-0.05cm}+\hspace*{-0.05cm}\frac{R_2}{L}\hspace*{-0.05cm}\right)\\
&+\frac{7\beta_0m}{K}\left(R_1+\frac{R_2}{L}\right)^2\\
\leq&\left(\frac{31\beta_0m}{K}+\frac{8\beta_0m}{K^2\sqrt{1-\sigma_2(\W)}}\right)\left(R_1+\frac{R_2}{L}\right)^2.
\end{aligned}
\end{eqnarray}
The proof is complete.
\hfill$\Box$\end{proof}

Similar to the proof of Theorem \ref{the1}, in the following, we give the proof of Corollary \ref{lemma_cons-noncons}.
\begin{proof}
The settings of $T_k=\frac{1-\sigma_2(\W)}{\theta_k}$ and $\eta_k=\frac{\theta_k^2}{M\sqrt{1-\sigma_2(\W)}}$ satisfy (\ref{Tk_equ}). Plugging them into (\ref{ell_equ}) and using $\theta_k=\frac{1}{k+1}$, we have
\begin{equation}
\ell_{k+1}\leq \ell_k+\frac{mM}{2(k+1)\sqrt{1-\sigma_2(\W)}}.\notag
\end{equation}
Summing over $k=0,\cdots,K-1$, we have
\begin{equation}
\ell_{K}\leq \ell_0+\frac{mM(\log K+1)}{\sqrt{1-\sigma_2(\W)}}.\notag
\end{equation}
Similar to the proof of Theorem \ref{the1}, we have the conclusion.
\hfill$\Box$\end{proof}

\section{Numerical Experiments}\label{sec:exp}
\subsection{Smooth Problem}\label{sec:exp_smooth}
We test the performance of the proposed algorithms on the following least square regression problem
\begin{eqnarray}\label{exp_smooth_problem}
\min_{x\in\R^{n}} \sum_{i=1}^m f_i(x)\quad \mbox{with}\quad f_i(x)\equiv\frac{1}{2}\|\A_i^Tx-\b_i\|^2+\frac{\mu}{2}\|x\|^2.
\end{eqnarray}
We generate $\A_i\in\R^{n\times N/m}$ from the uniform distribution with each entry in $[0,1]$ and normalize each column of $\A_i$ to be 1, where $N$ is the sample size. We set $N=1000$, $n=500$, $m=100$, and $\b_i=\A_i^Tx$ with some $x$ generated from the Gaussian distribution. We consider both the strongly convex objective ($\mu>0$) and nonstrongly convex objective ($\mu=0$).

\begin{figure*}
\centering
\hspace*{-0.31cm}\begin{tabular}{@{\extracolsep{0.001em}}c@{\extracolsep{0.001em}}c@{\extracolsep{0.001em}}c@{\extracolsep{0.001em}}c@{\extracolsep{0.001em}}c@{\extracolsep{0.001em}}c}
\hspace*{-0.31cm}\includegraphics[width=0.232\textwidth,keepaspectratio]{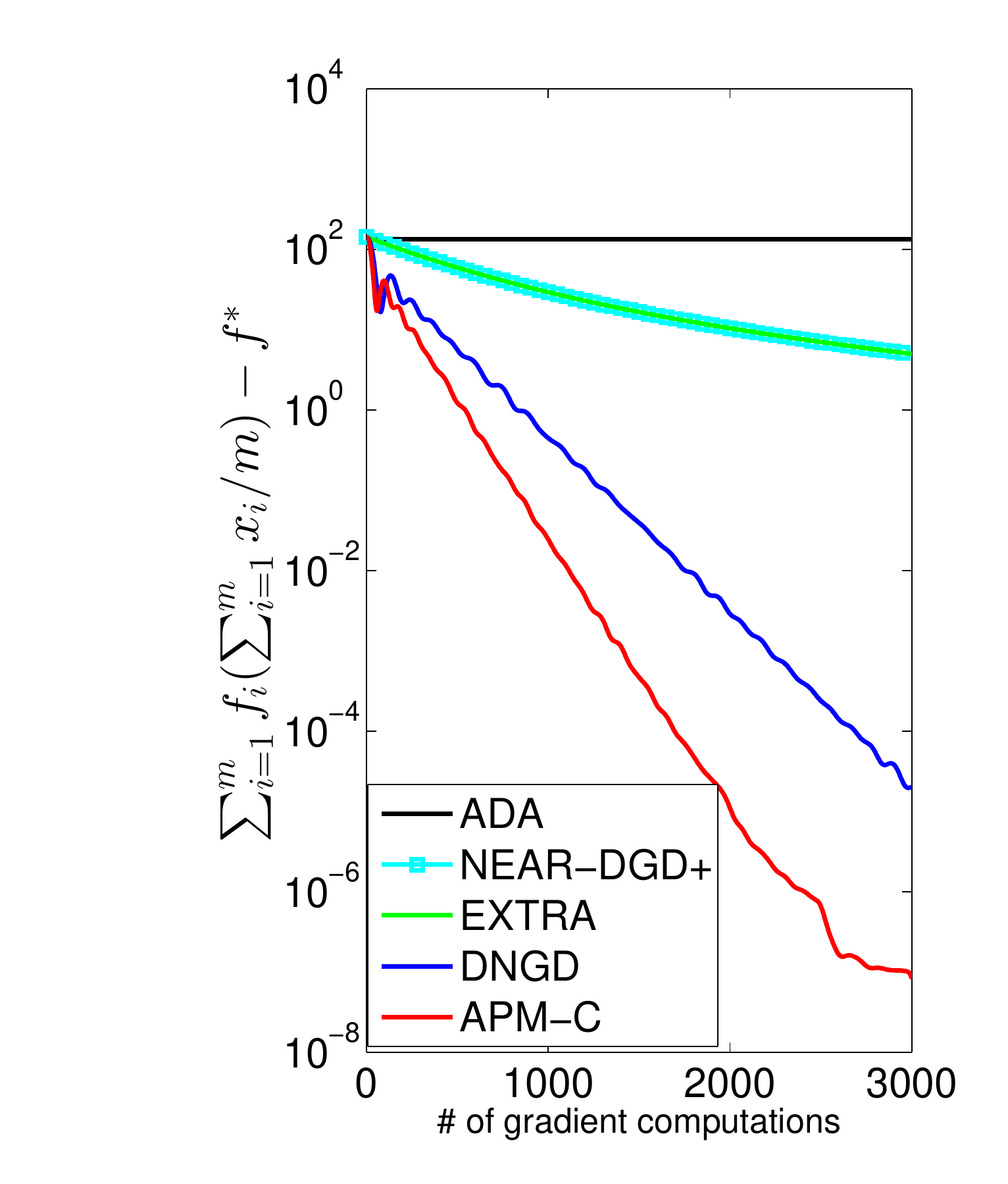}
&\hspace*{-0.05cm}\includegraphics[width=0.161\textwidth,keepaspectratio]{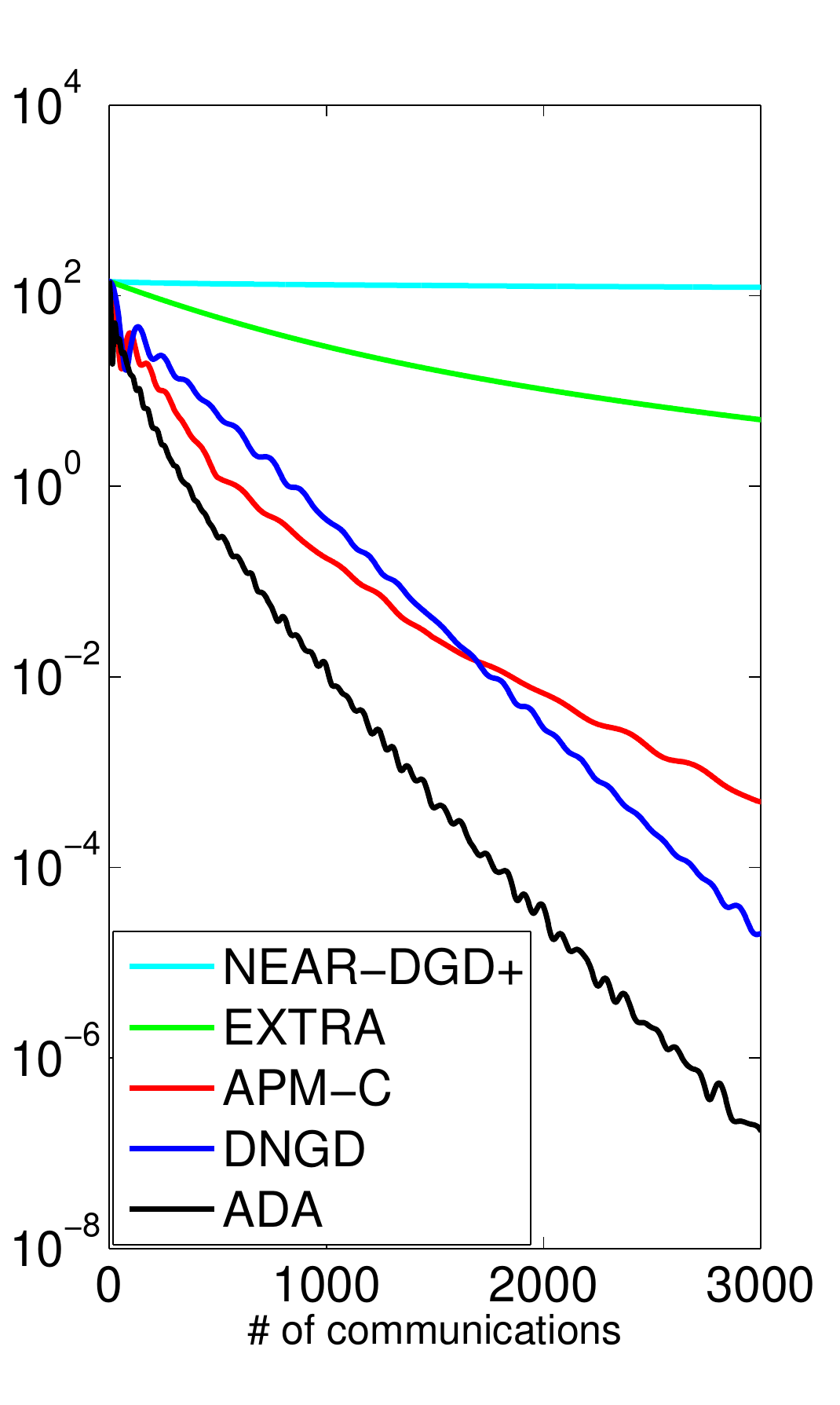}
&\hspace*{-0.05cm}\includegraphics[width=0.161\textwidth,keepaspectratio]{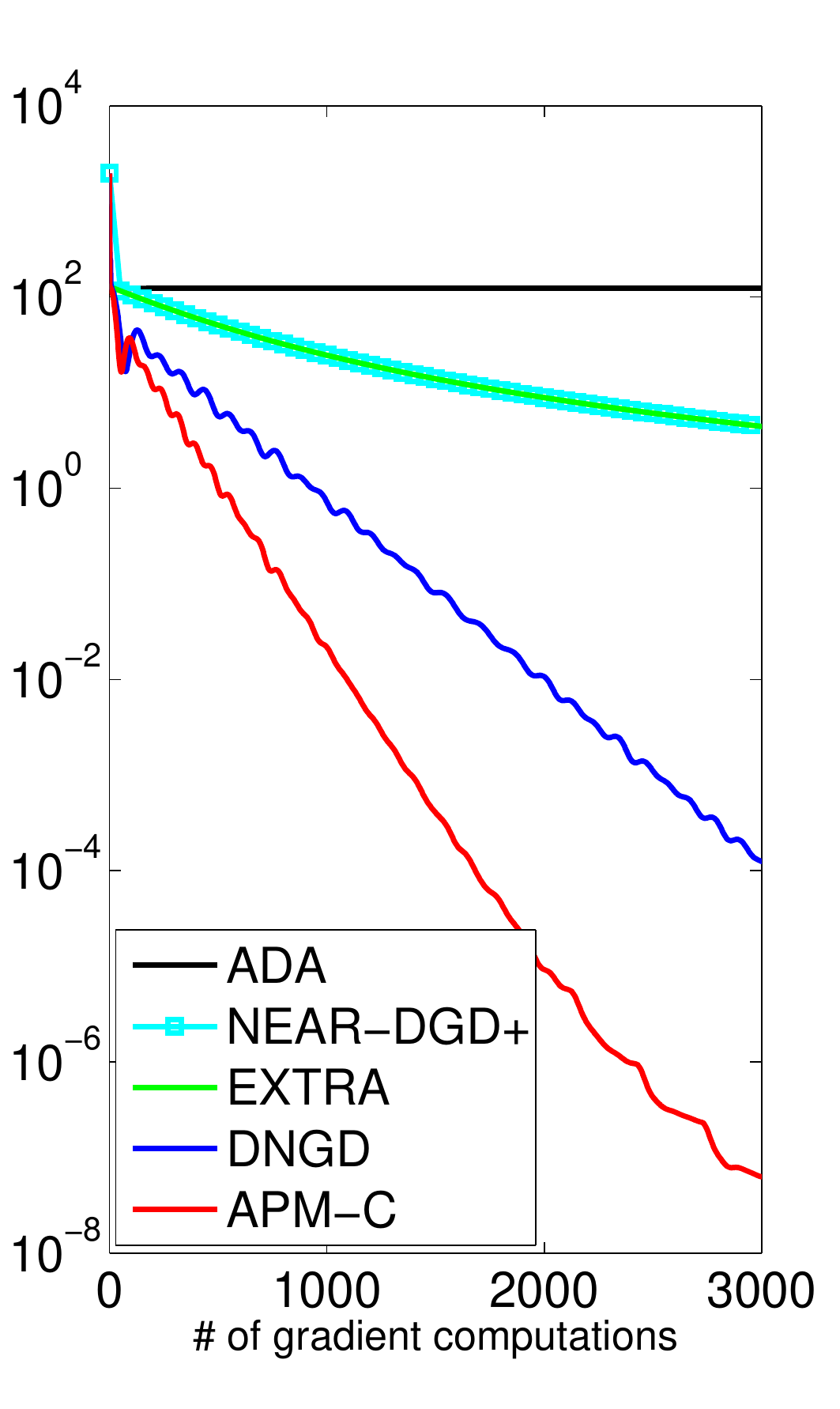}
&\hspace*{-0.05cm}\includegraphics[width=0.161\textwidth,keepaspectratio]{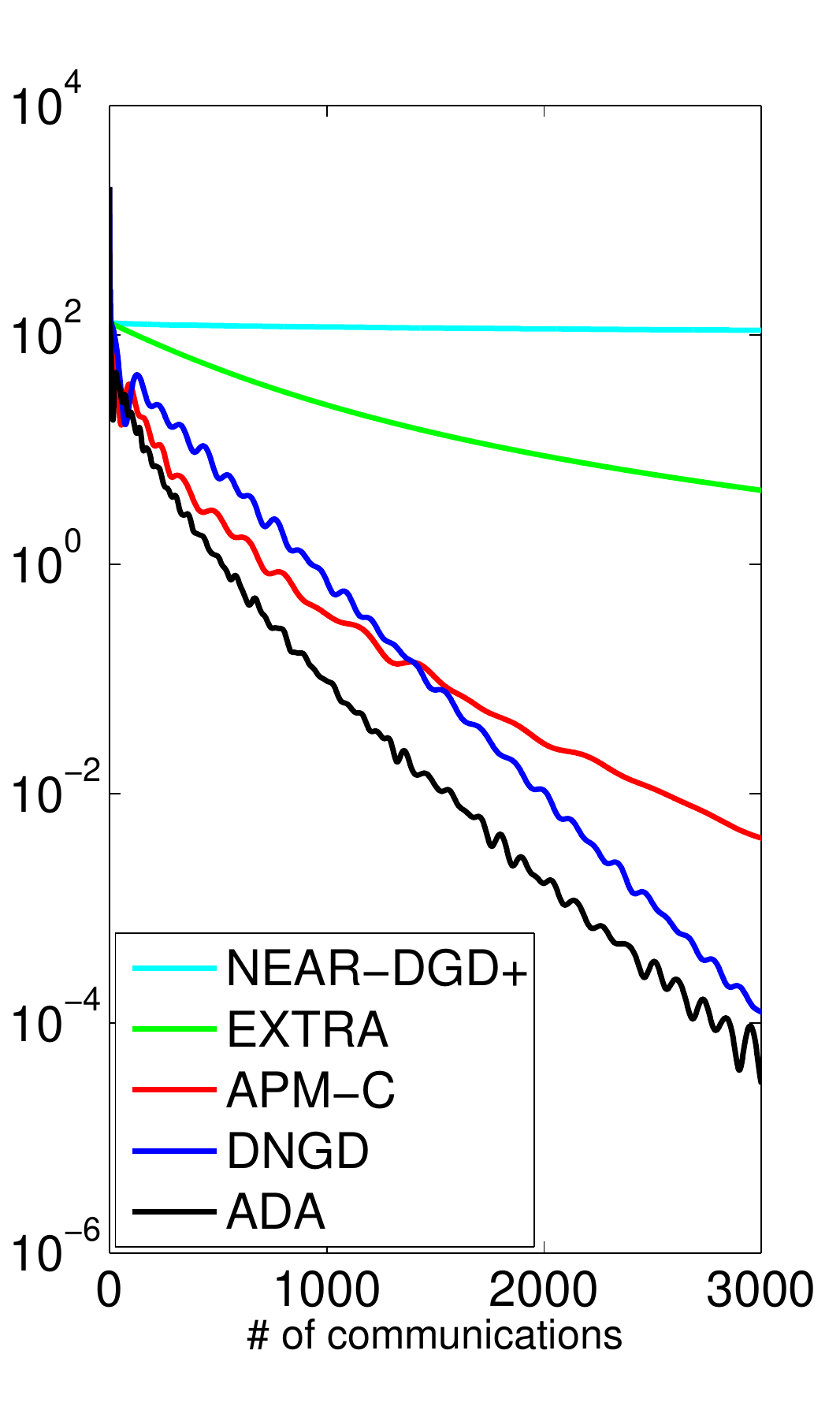}
&\hspace*{-0.05cm}\includegraphics[width=0.161\textwidth,keepaspectratio]{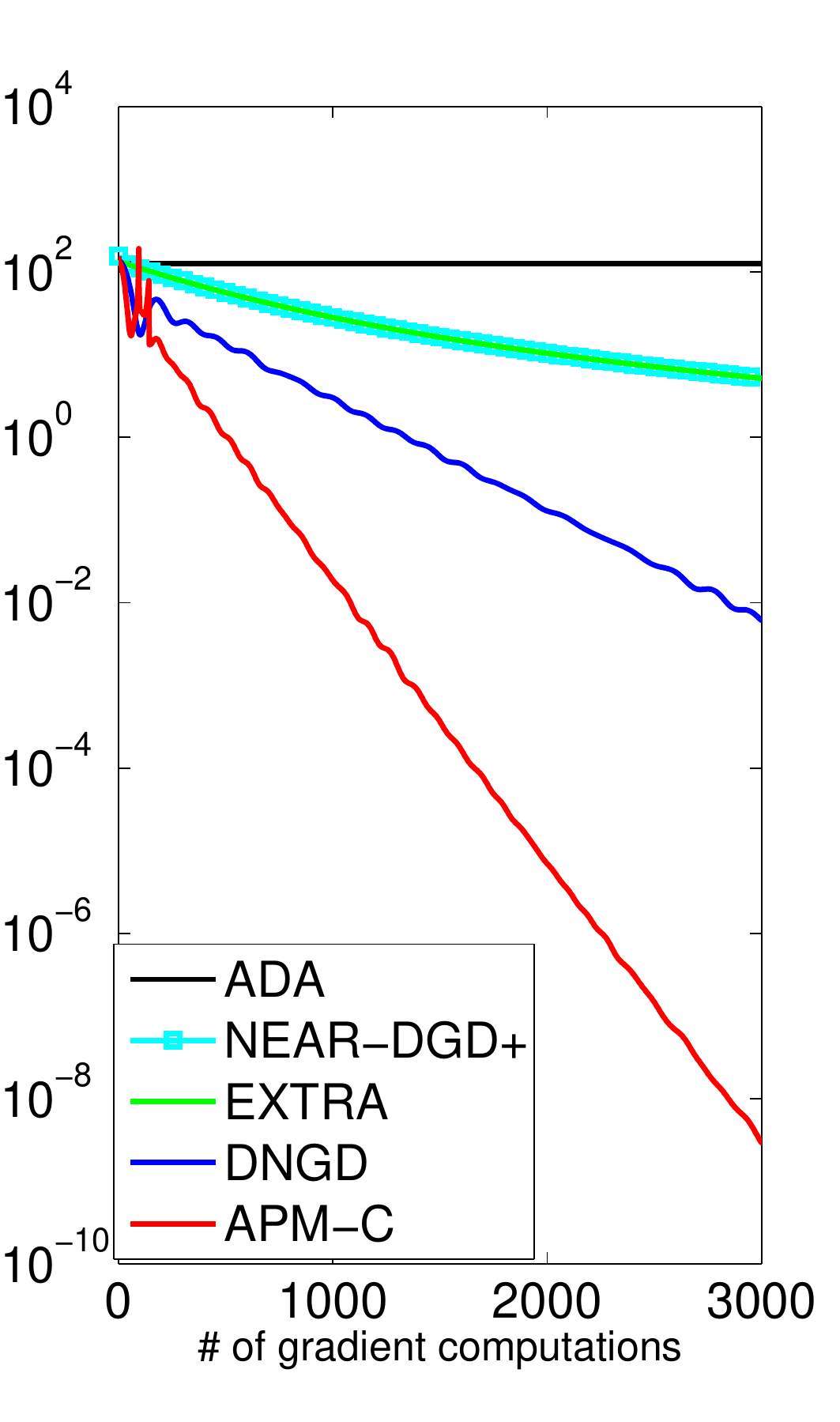}
&\hspace*{-0.05cm}\includegraphics[width=0.161\textwidth,keepaspectratio]{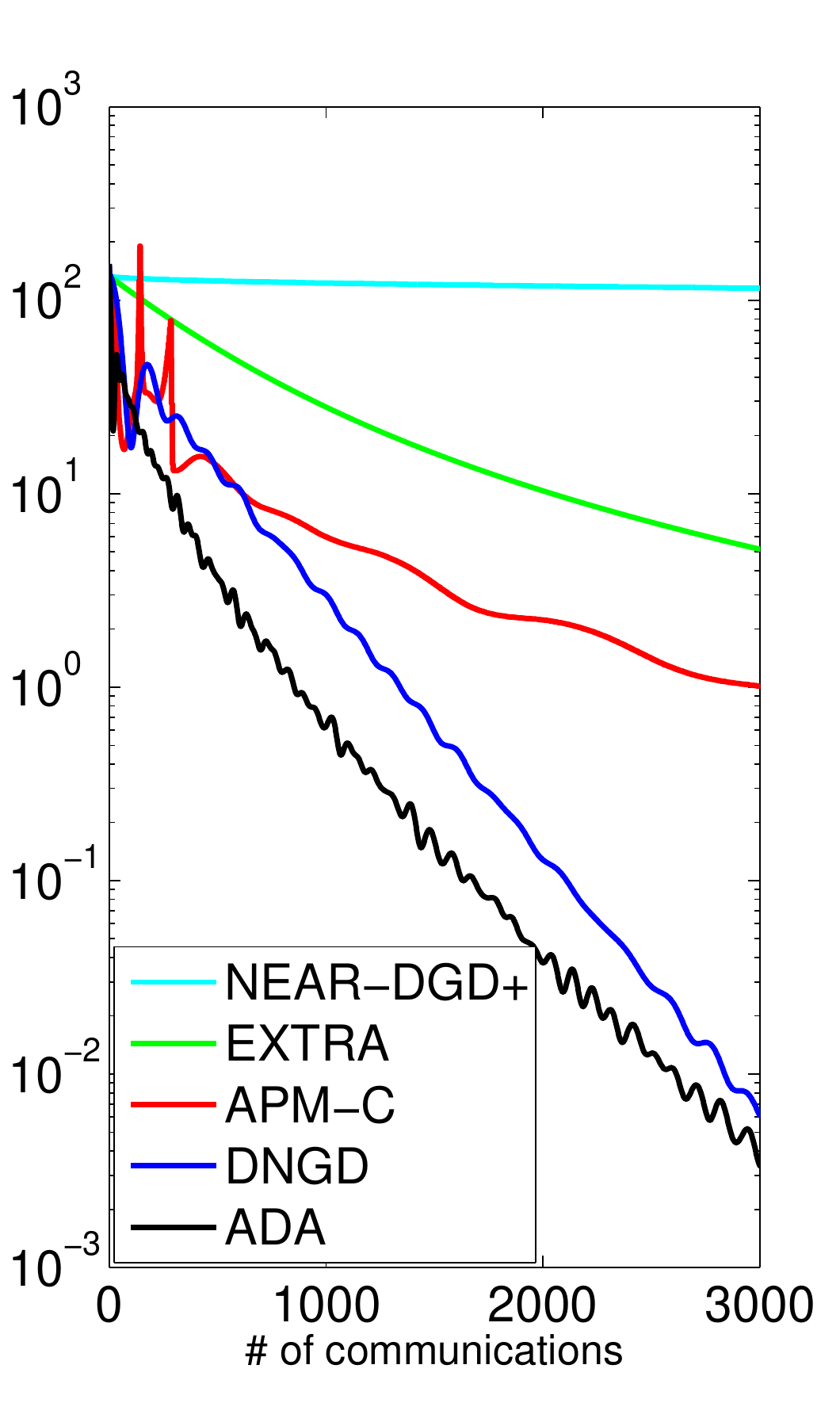}\\
\vspace*{-0.1cm}
\end{tabular}
\caption{Comparisons on the strongly convex problem (\ref{exp_smooth_problem}) and Erd\H{o}s$-$R\'{e}nyi random network with $p=0.5$ (left two), $p=0.1$ (middle two), and $p=0.05$ (right two).}\label{fig2}
\end{figure*}

\begin{figure*}
\centering
\hspace*{-0.35cm}\begin{tabular}{@{\extracolsep{0.001em}}c@{\extracolsep{0.001em}}c@{\extracolsep{0.001em}}c@{\extracolsep{0.001em}}c@{\extracolsep{0.001em}}c@{\extracolsep{0.001em}}c}
\hspace*{-0.35cm}\includegraphics[width=0.24\textwidth,keepaspectratio]{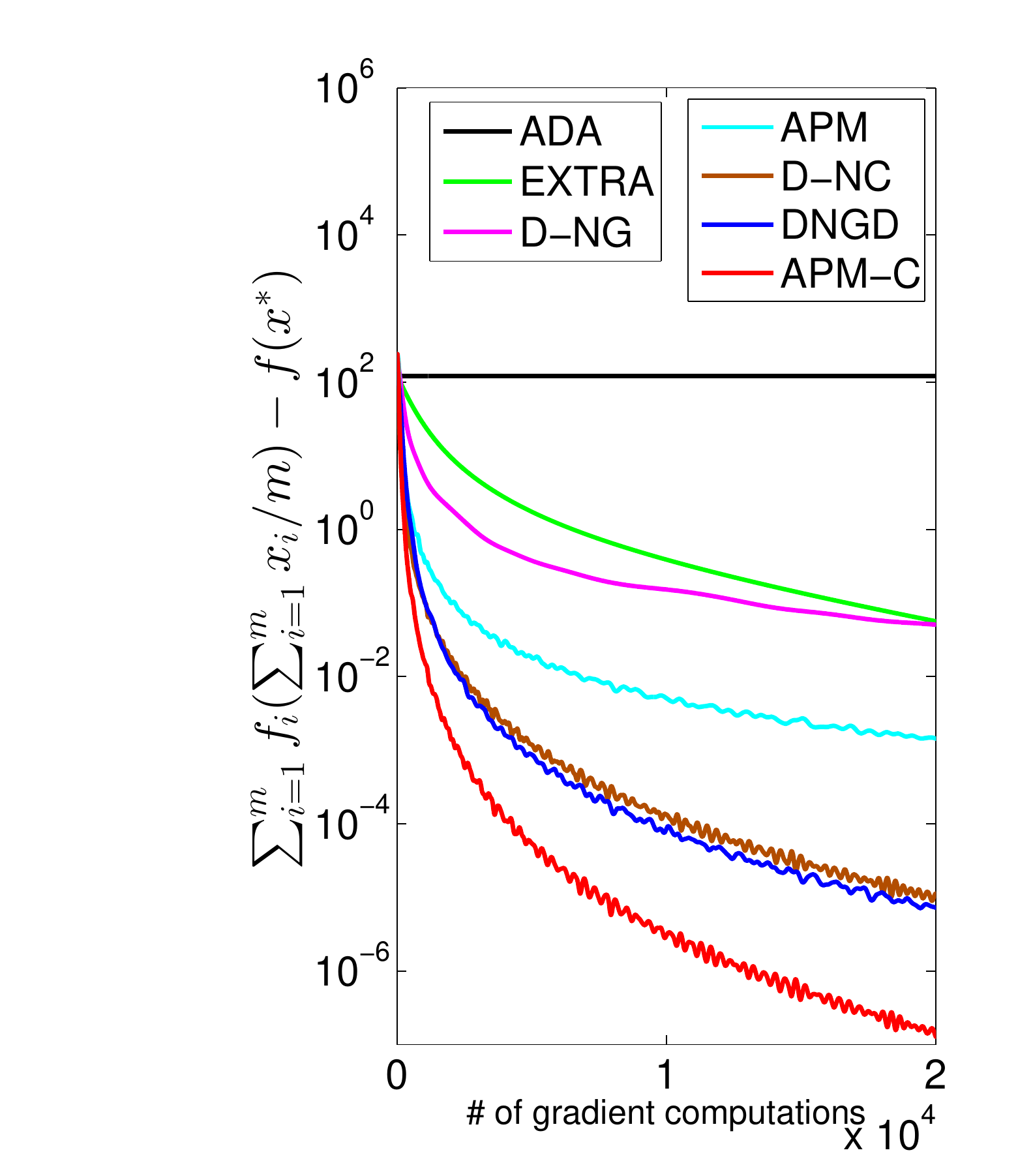}
&\hspace*{-0.05cm}\includegraphics[width=0.161\textwidth,keepaspectratio]{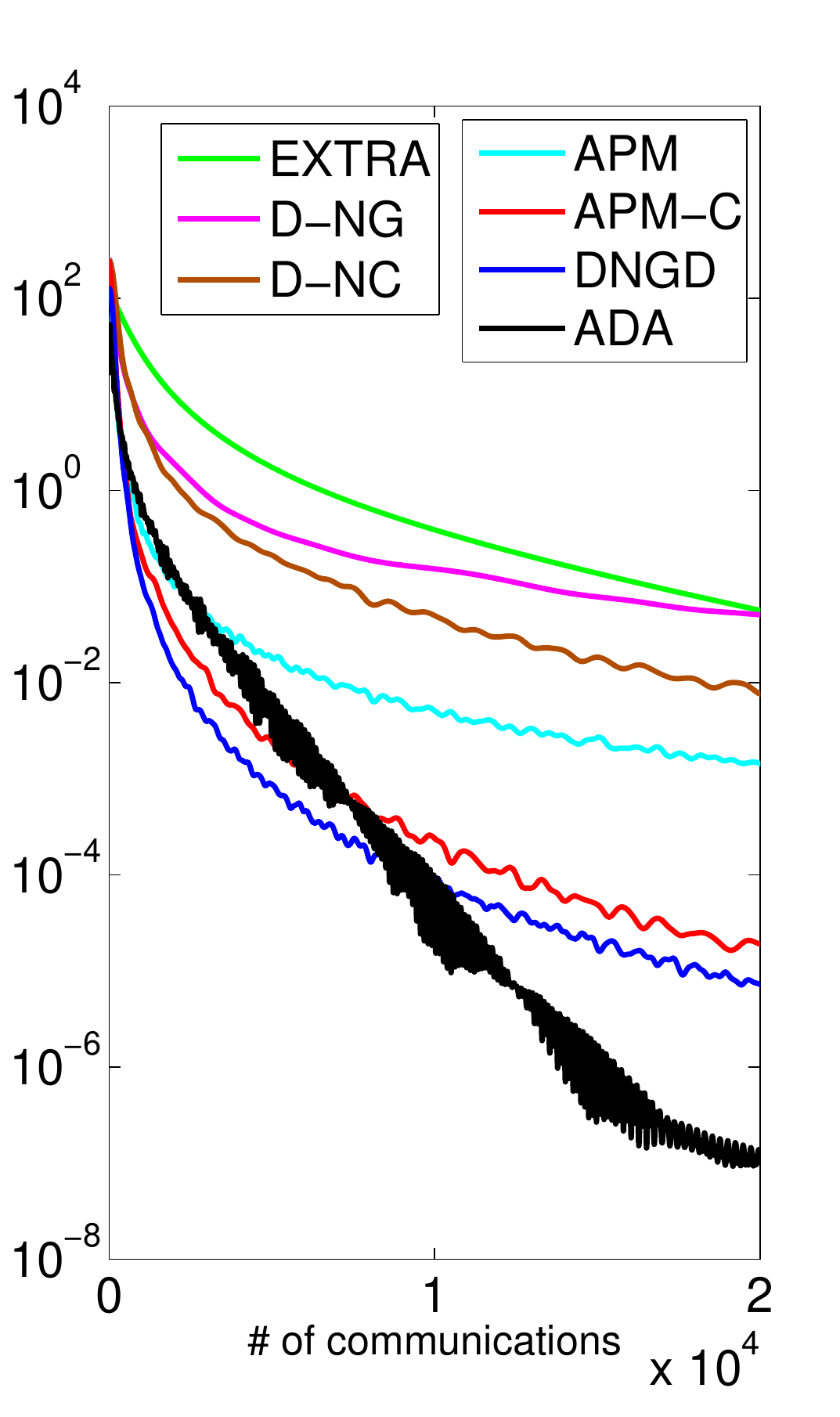}
&\hspace*{-0.05cm}\includegraphics[width=0.161\textwidth,keepaspectratio]{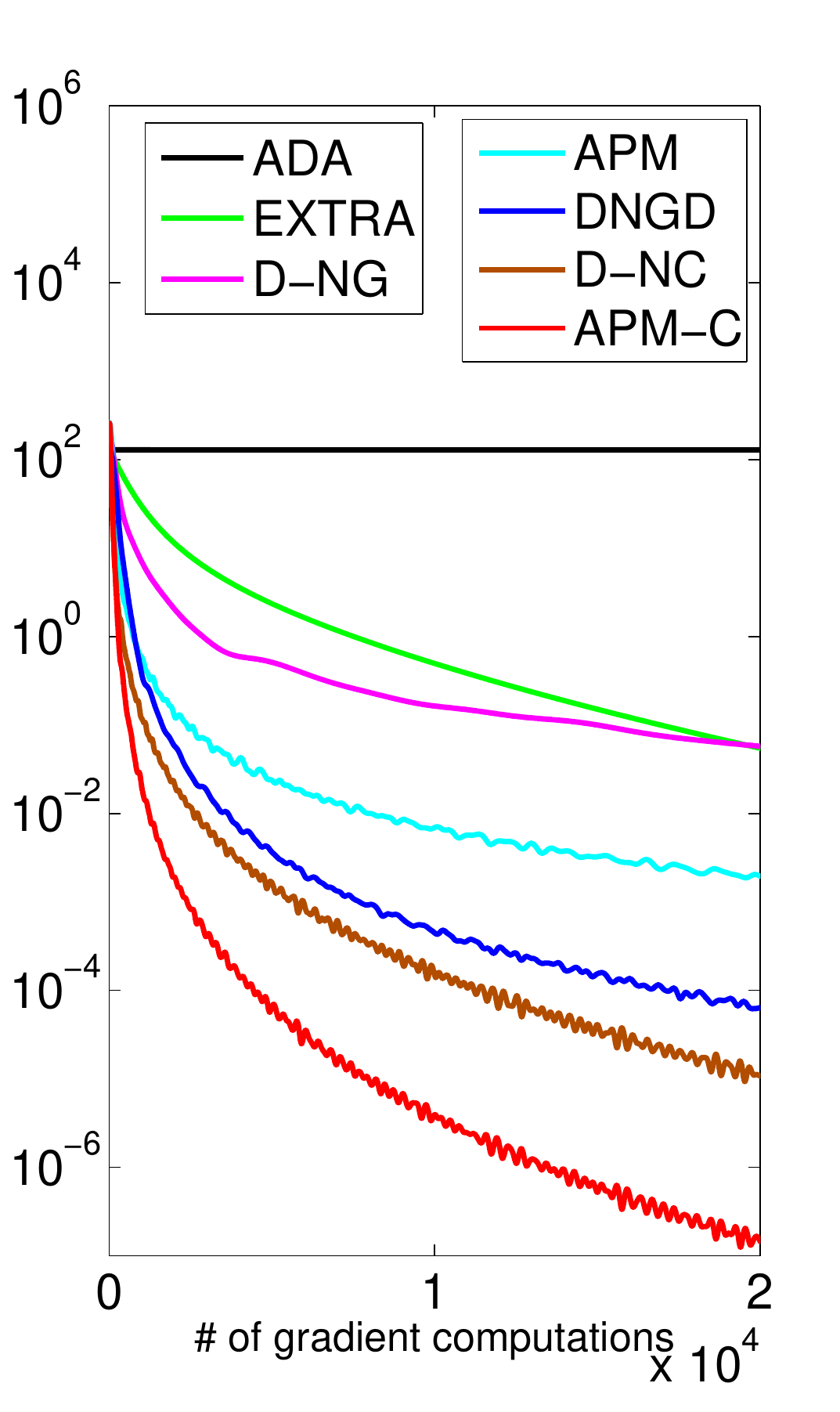}
&\hspace*{-0.05cm}\includegraphics[width=0.161\textwidth,keepaspectratio]{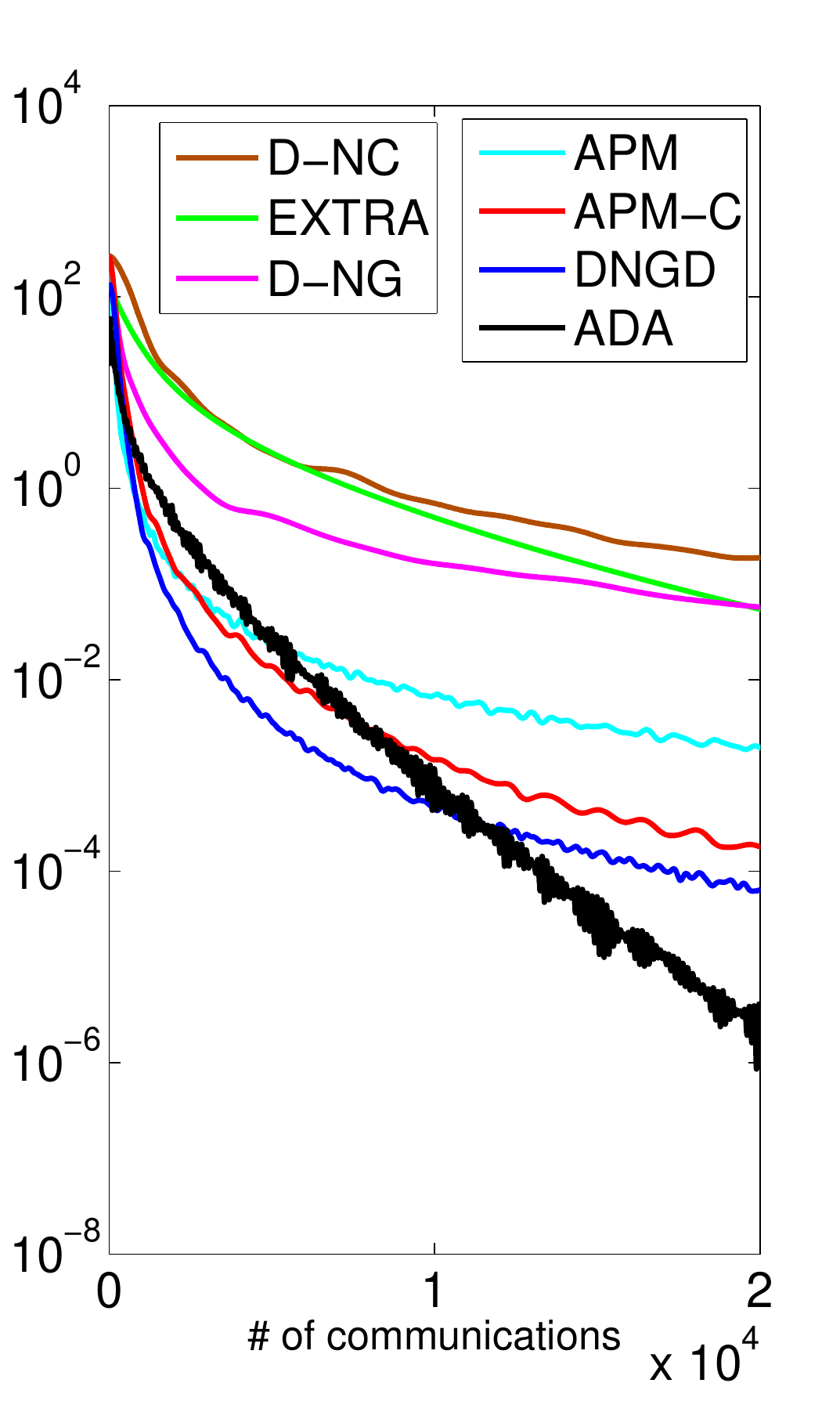}
&\hspace*{-0.05cm}\includegraphics[width=0.161\textwidth,keepaspectratio]{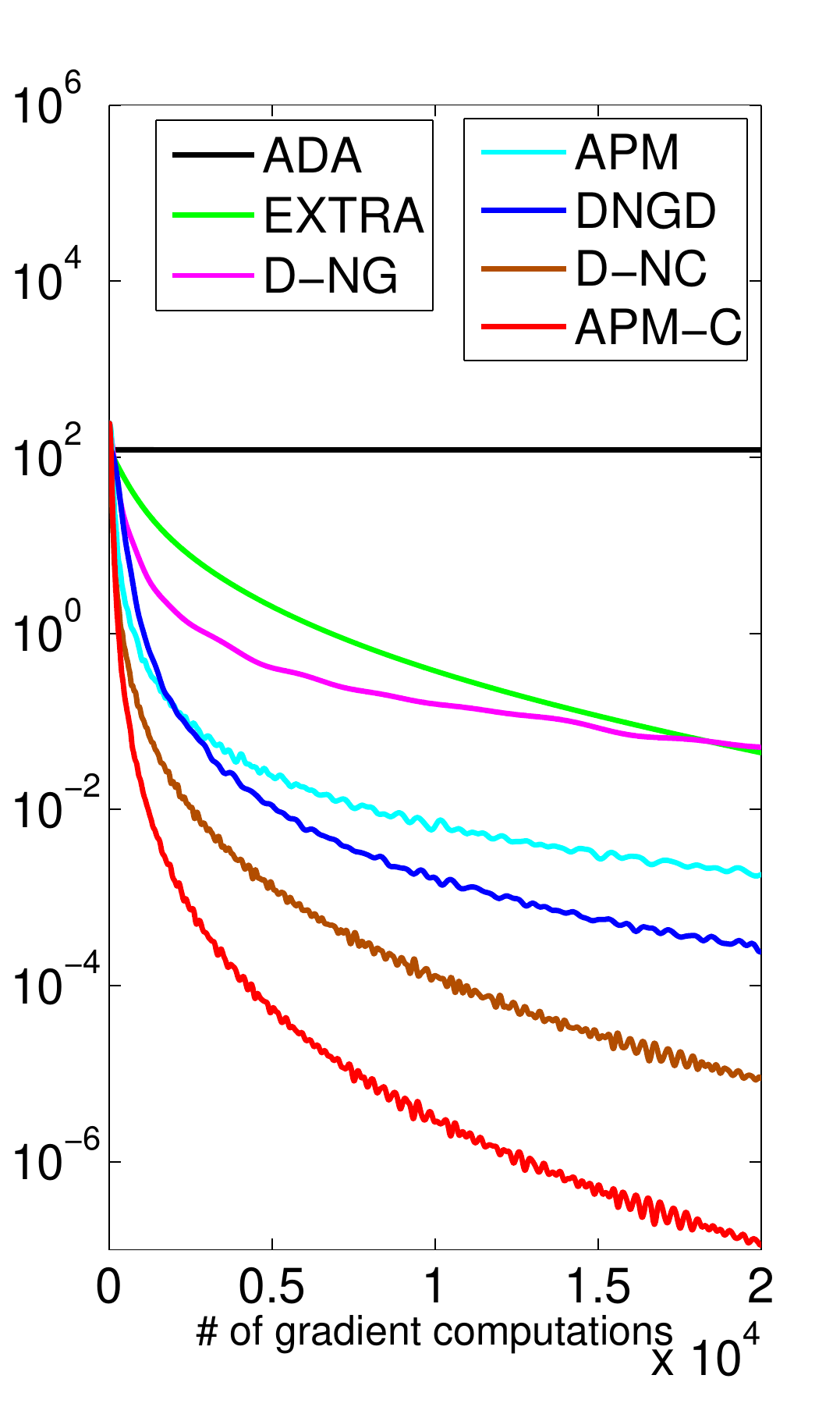}
&\hspace*{-0.05cm}\includegraphics[width=0.161\textwidth,keepaspectratio]{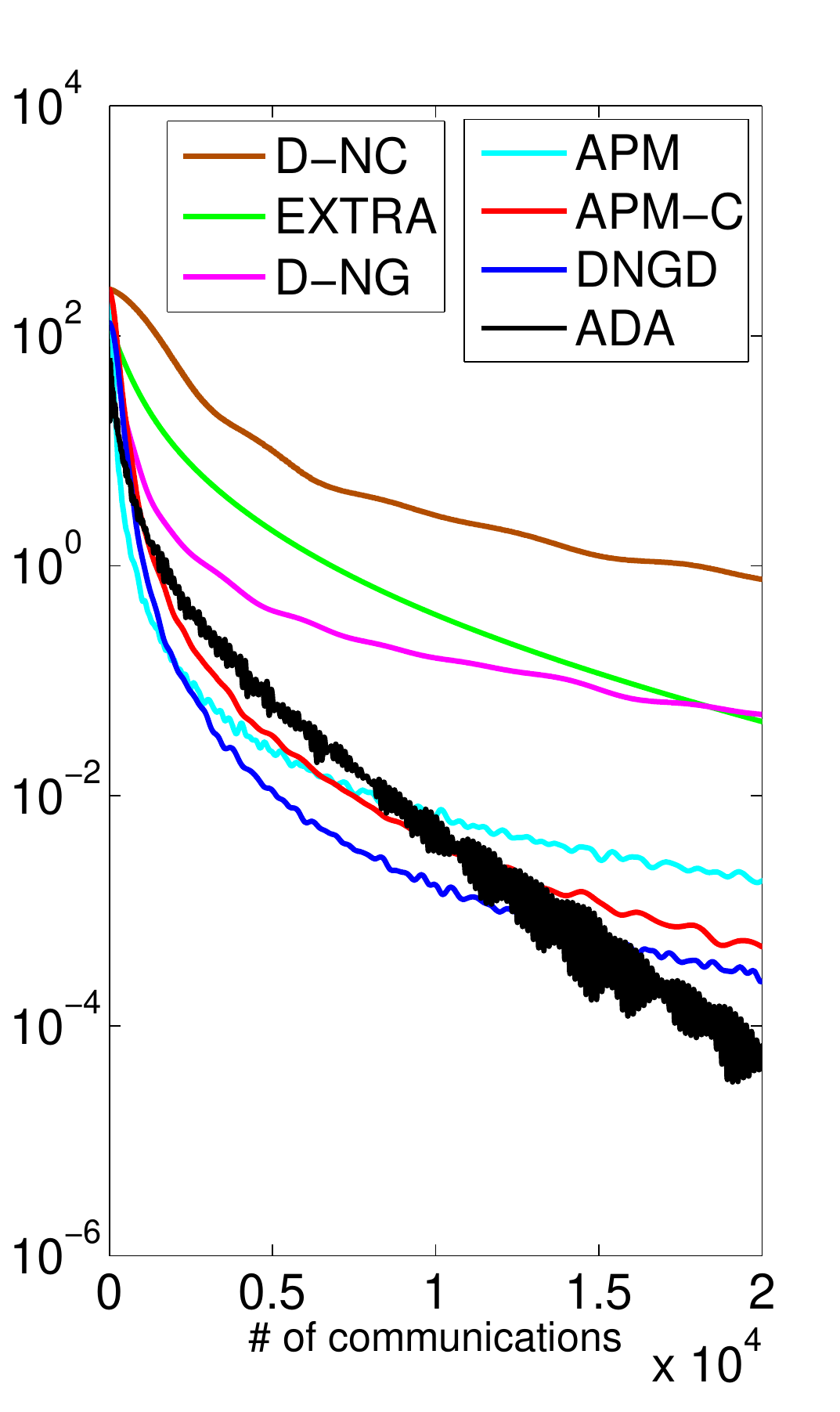}
\end{tabular}
\caption{Comparisons on the nonstrongly convex problem (\ref{exp_smooth_problem}) and Erd\H{o}s$-$R\'{e}nyi random network with $p=0.5$ (left two), $p=0.1$ (middle two), and $p=0.05$ (right two).}\label{fig1}
\end{figure*}

We consider the Erd\H{o}s$-$R\'{e}nyi random graph where each pair of agents has a connection with the probability of $p$. Almost all Erd\H{o}s$-$R\'{e}nyi random graph with $p=\frac{2\log m}{m}$ is connected and $\frac{1}{1-\sigma_2(\W)}=O(1)$ \cite[Proposition 5]{nedic-2018}. We test the performance with $p=0.5$, $p=0.1$, and $p=0.05$, and observe that $1-\sigma_2(\W)=0.33$, $1-\sigma_2(\W)=0.13$, and $1-\sigma_2(\W)=0.04$, respectively. We set $\W=\frac{\I+\M}{2}$, where $\M$ is the Metropolis weight matrix \cite{boyd2004}.

For the strongly convex objective, we compare APM-C with the accelerated dual ascent (ADA) \cite{dasent}, distributed Nesterov's gradient descent (DNGD) \cite{qu2017-2}, EXTRA \cite{Shi-2015}, and NEAR-DGD+ \cite{Wei-2018}. NEAR-DGD+ can be seen as a counterpart of APM-C without Nesterov's acceleration scheme and accelerated average consensus. We set $\mu=0.0001$ and leave the test on different condition numbers in our supplementary material. We set the inner iteration number $T_k$ as $\lceil\frac{k\sqrt{\mu/L}}{3\sqrt{1-\sigma_2(\W)}}\rceil$, $\beta_0=100$ and the stepsize as $\frac{1}{L}$ for APM-C, where $\lceil\cdot\rceil$ means the top integral function. For ADA, we follow the theory in \cite{Uribe-2017} to set the inner iteration number as $\lceil\sqrt{\frac{L}{\mu}}\log\frac{L}{\mu}\rceil$ (we leave the test on the impact of smaller inner iteration numbers in our supplementary material) and the stepsize as $\mu$. We tune the best stepsize as $\frac{1}{L}$ and $\frac{0.5}{L}$ for EXTRA and DNGD, respectively. We follow \cite{Wei-2018} to set $T_k=k$ for NEAR-DGD+. We initialize $\x^0$ at $\0$ for all the compared methods.

Figure \ref{fig2} plots the comparisons. We can see that APM-C has the lowest computation cost and ADA has the lowest communication cost, which match the theory. Thus, APM-C is more suited to the environment where computation is the bottleneck of the overall performance. Due to the large $T_k$ for ADA, it only performs several outer iterations after $3000$ gradient computations and thus has almost no decreasing in the first, third and fifth plots of Figure \ref{fig2}. APM-C has a higher communication cost than DNGD but a lower computation cost for $p=0.1$ and $p=0.5$. APM-C performs better than NEAR-DGD+ and it verifies that Nesterov's acceleration scheme is critical to improve the performance. From Figure \ref{fig2}, we observe that APM-C is more suited to the network with small $\frac{1}{\sqrt{1-\sigma_2(\W)}}$, otherwise, the communication costs will be high, e.g., see the right two plots in Figure \ref{fig2}. In fact, when $\frac{1}{\sqrt{1-\sigma_2(\W)}}$ is small, $\frac{\sqrt{\mu/L}}{\sqrt{1-\sigma_2(\W)}}$ will also be small, e.g., $0.01$ in our experiment with $p=0.1$. Thus the required $T_k$ is small, e.g., $T_{3000}=11$ in our experiment. As a comparison, NEAR-DGD+ suggests $T_k=k$ and thus it increases quickly, which leads to almost no decreasing in the second, fourth and sixth plots of Figure \ref{fig2}. In practice, we can use the expander graph \cite{chow-2016} which satisfies $\frac{1}{1-\sigma_2(\W)}=O(1)$ \cite{nedic-2018}. The Erd\H{o}s$-$R\'{e}nyi random graph is a special case of the expander graph and can be easily implemented.

For the nonstrongly convex objective, we test the performance of APM, APM-C, D-NG \cite{Jakovetic-2014}, D-NC \cite{Jakovetic-2014}, DNGD \cite{qu2017-2}, EXTRA \cite{Shi-2015} and ADA \cite{Uribe-2017}. We set $T_k$ as $\lceil\frac{\log(k+1)}{5\sqrt{1-\sigma_2(\W)}}\rceil$ and $\lceil\frac{\log(k+1)}{-5\log\sigma_2(\W)}\rceil$ for APM-C and D-NC, respectively. We set the stepsize as $\frac{1}{L}$ for the two algorithms and $\beta_0=100$ for APM-C. We set $\frac{\beta_0}{\vartheta_k}=\frac{k+1}{c}$ with $c=50$ for APM and tune the best $c=1$ for D-NG. Larger $c$ makes D-NG diverge. We tune the best stepsize as $\frac{1}{L}$ for EXTRA, $\frac{0.05}{L}$ for DNGD with $p=0.05$, $\frac{0.1}{L}$ for DNGD with $p=0.1$, and $\frac{0.2}{L}$ for DNGD with $p=0.5$, respectively. For ADA, we follow \cite{Uribe-2017} to add a small regularizer of $\frac{\epsilon}{2}\|\x\|^2$ to each $f_i(\x)$ and solve a regularized problem with $\epsilon=10^{-7}$. We set the inner iteration number as $T_k=\lceil\sqrt{\frac{L}{\epsilon}}\log\frac{L}{\epsilon}\rceil$.

\begin{figure*}
\centering
\hspace*{-0.3cm}\begin{tabular}{@{\extracolsep{0.001em}}c@{\extracolsep{0.001em}}c@{\extracolsep{0.001em}}c@{\extracolsep{0.001em}}c@{\extracolsep{0.001em}}c@{\extracolsep{0.001em}}c}
\hspace*{-0.3cm}\includegraphics[width=0.235\textwidth,keepaspectratio]{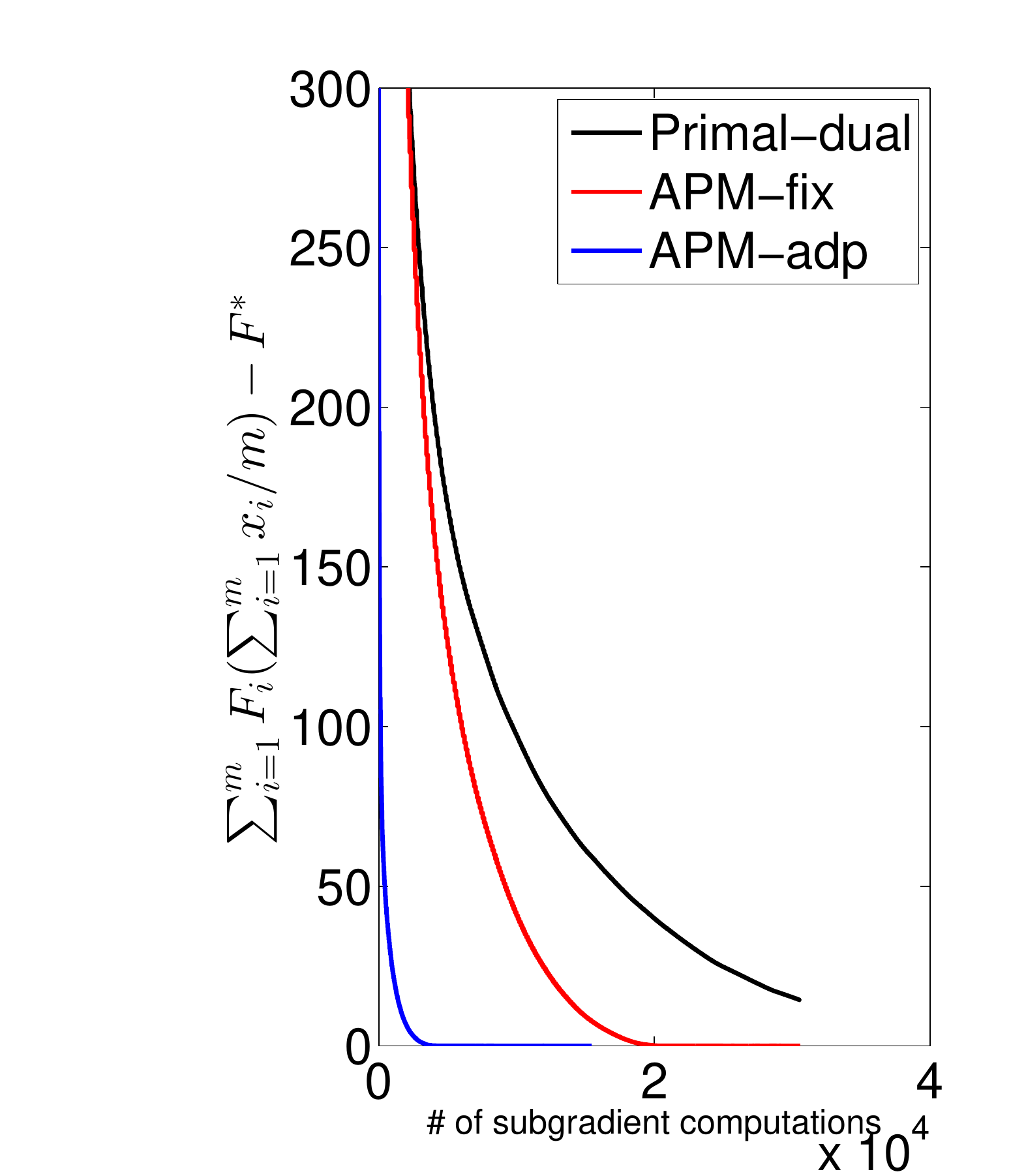}
&\hspace*{-0.05cm}\includegraphics[width=0.16\textwidth,keepaspectratio]{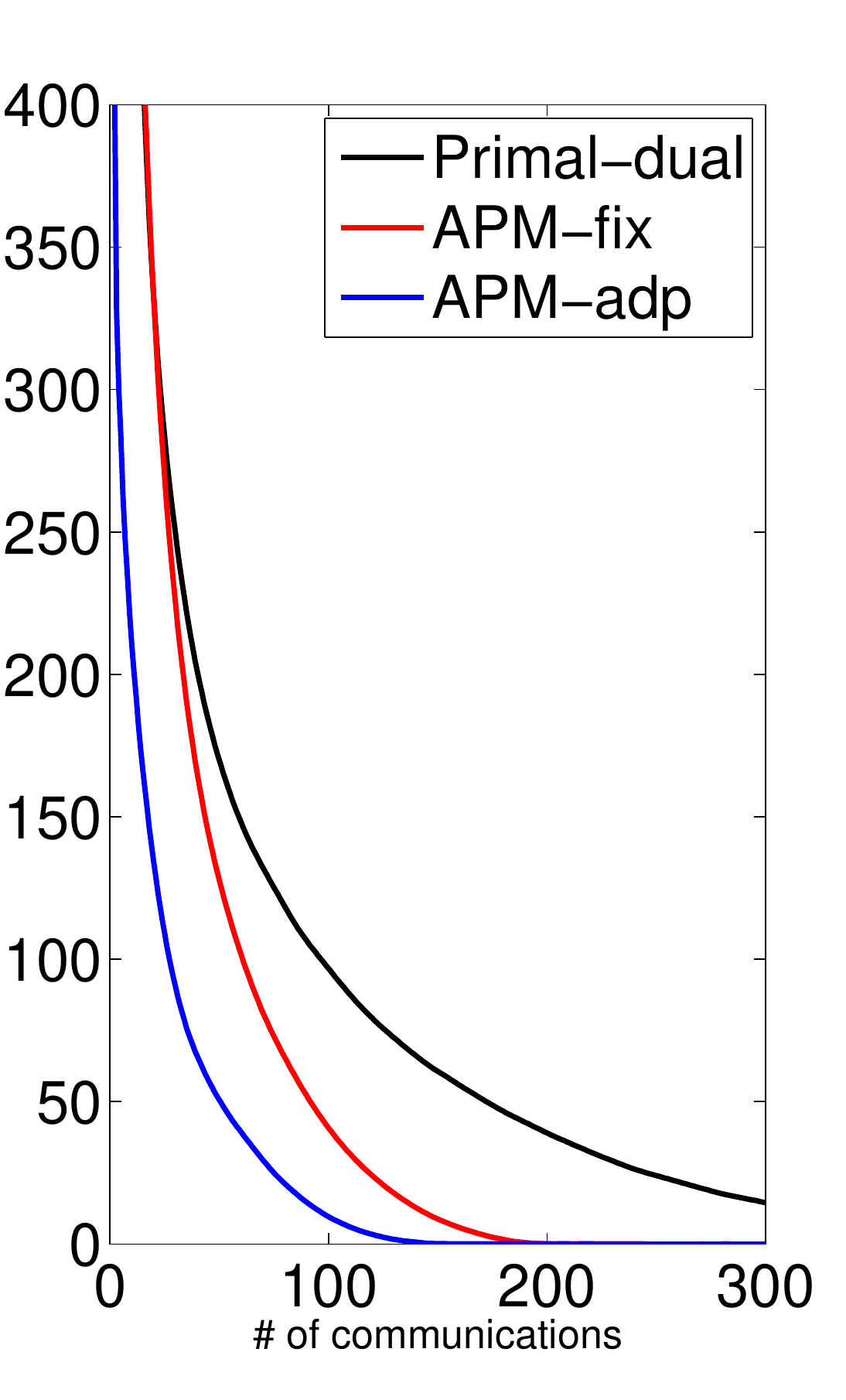}
&\hspace*{-0.05cm}\includegraphics[width=0.16\textwidth,keepaspectratio]{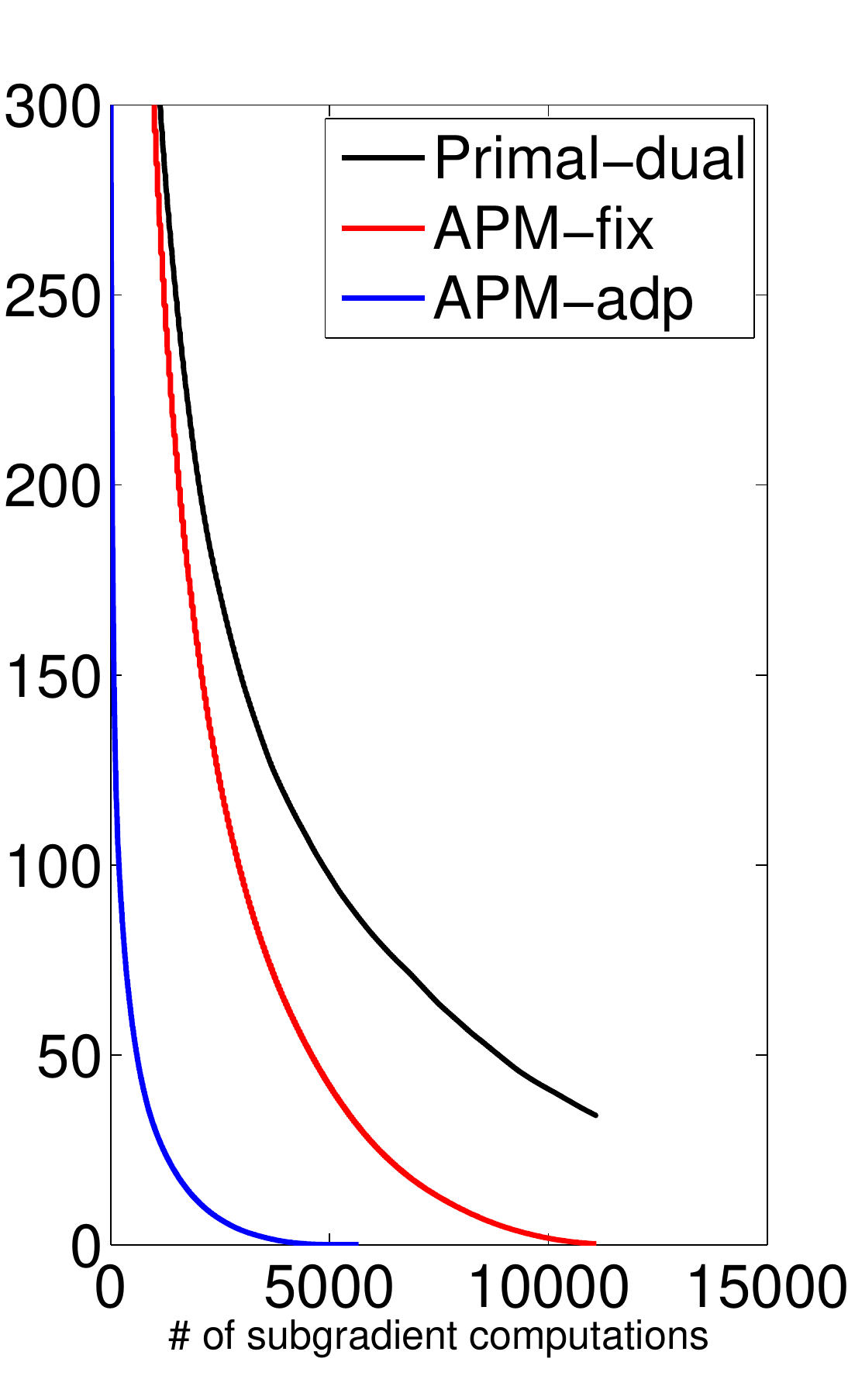}
&\hspace*{-0.01cm}\includegraphics[width=0.16\textwidth,keepaspectratio]{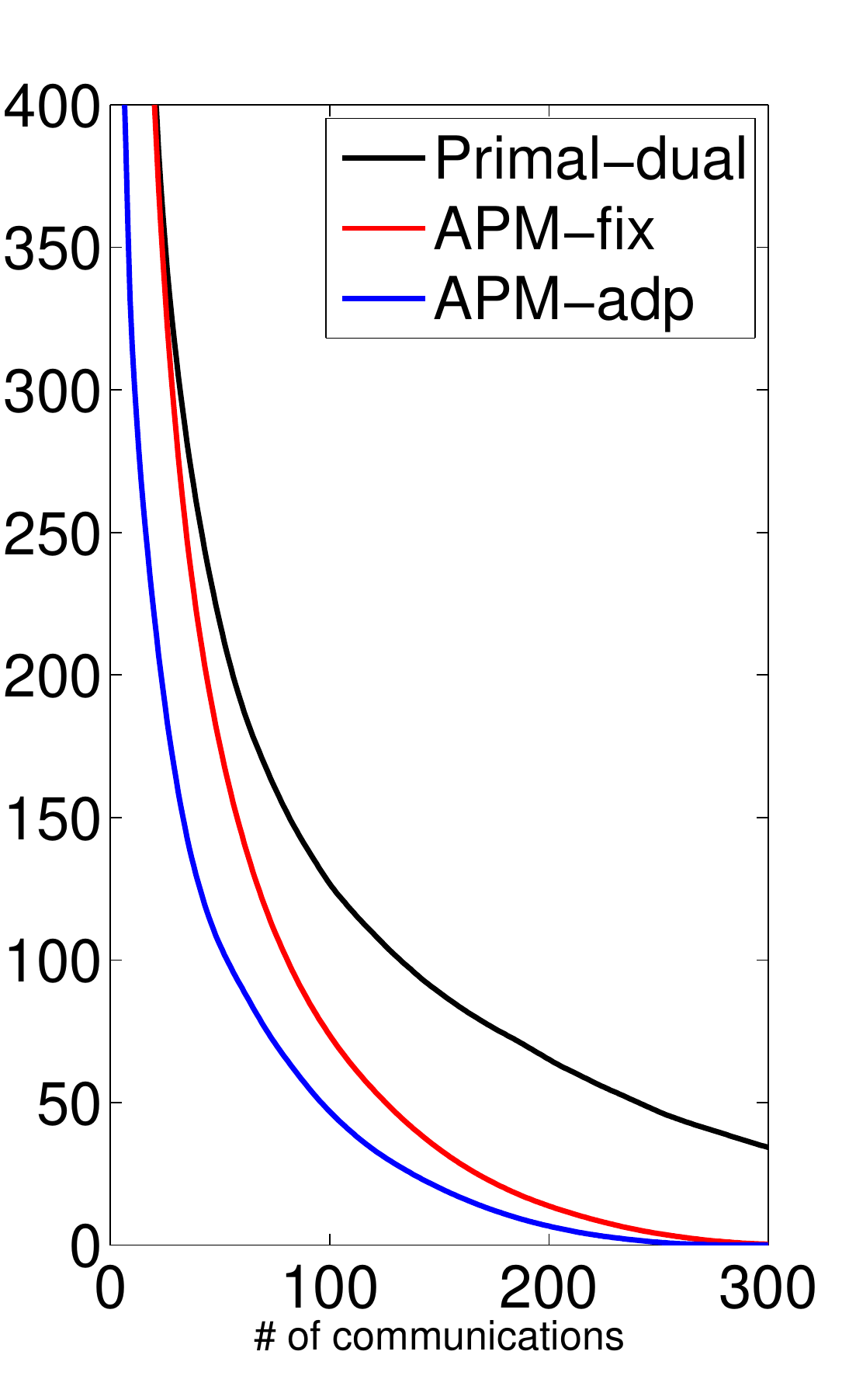}
&\hspace*{-0.05cm}\includegraphics[width=0.16\textwidth,keepaspectratio]{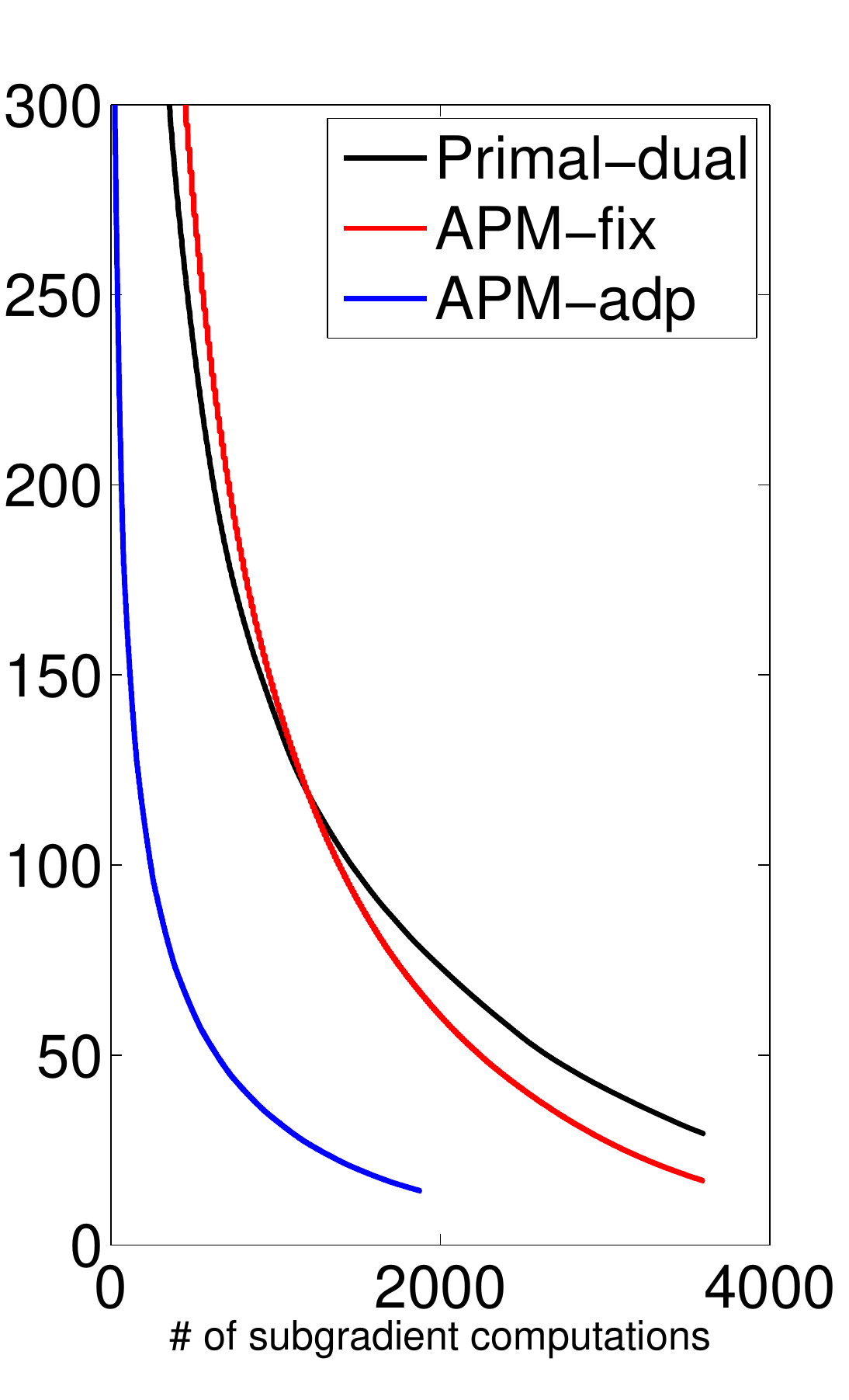}
&\hspace*{-0.05cm}\includegraphics[width=0.16\textwidth,keepaspectratio]{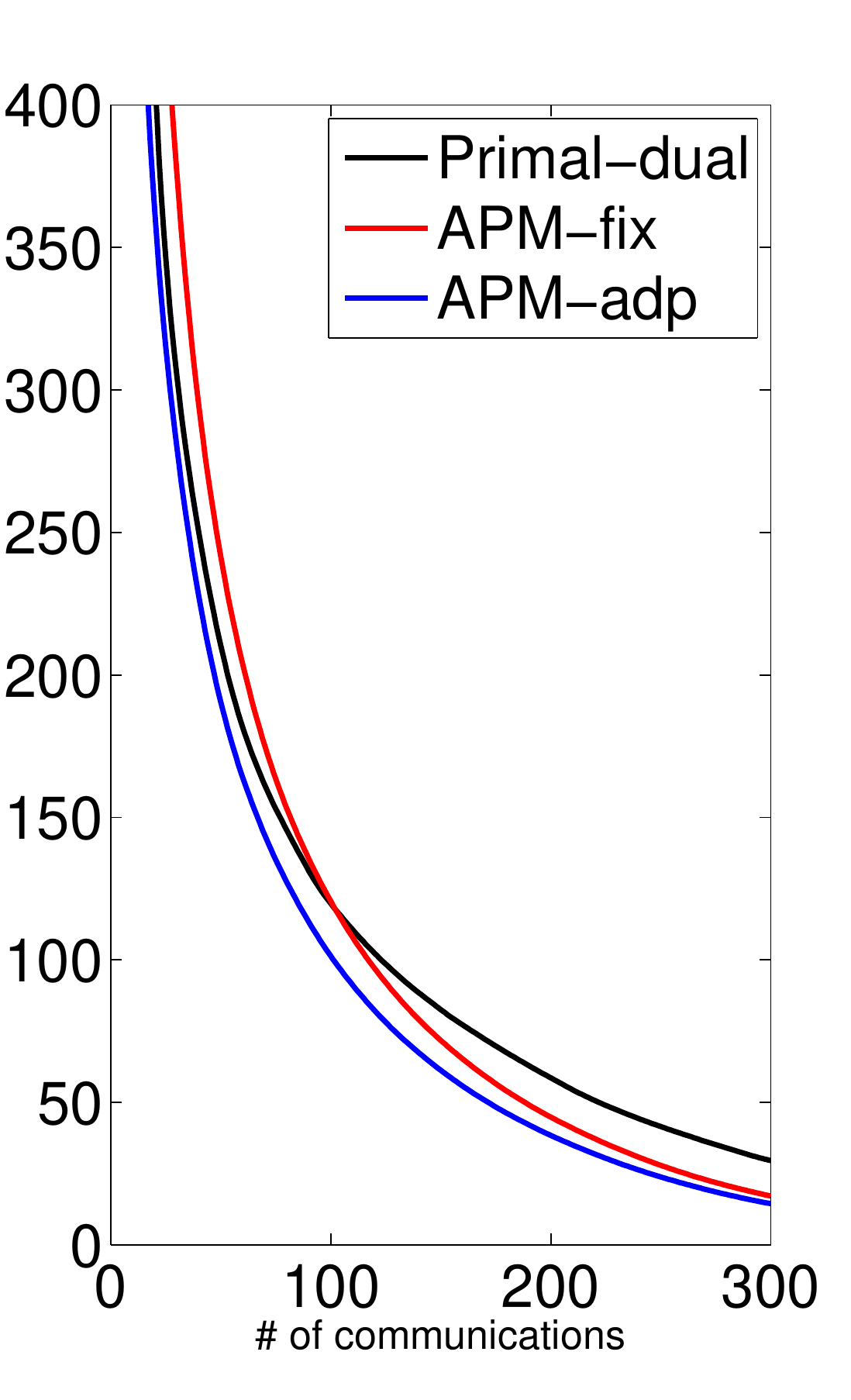}
\end{tabular}
\caption{Comparisons on the nonsmooth problem (\ref{exp_nonsmooth_problem}) and Erd\H{o}s$-$R\'{e}nyi random network with $p=0.5$ (left two), $p=0.1$ (middle two), and $p=0.05$ (right two).}\label{fig0}
\end{figure*}

From figure \ref{fig1}, we can see that APM-C also has the lowest computation cost. APM performs better than D-NG because APM allows to use a larger stepsize in practice, which can reduce the negative impacts from the diminishing stepsize. APM is more suited to the environment where high precision is not required, otherwise, the diminishing stepsize makes the algorithm slow. ADA has the lowest communication cost. However, ADA needs to predefine $\epsilon$ to set the algorithm parameter and thus it only achieves an approximate optimal solution in the precision of $\epsilon$ due to the weakness of the regularization trick. From Figure \ref{fig1}, we can see that the value of $\frac{1}{\sqrt{1-\sigma_2(\W)}}$ has less impact on the performance of APM-C than that in the strongly convex setting.

\subsection{Non-smooth Problem}
In this section, we follow \cite{Lam-2017} to test Algorithm \ref{D-NG} on the following decentralized linear Support Vector Machine (SVM) model
\begin{eqnarray}\label{exp_nonsmooth_problem}
\min_{x\in\R^{n}} \sum_{i=1}^m f_i(x)\quad \mbox{with}\quad f_i(x)\equiv \max\{0,1-\b_i\A_i^Tx\}.
\end{eqnarray}
The problem setting is similar to Section \ref{sec:exp_smooth} and the only difference is that we set $\b_i=\mbox{Sign}(\A_i^Tx)$ for some $\x$ generated from the Gaussian distribution. We also consider the Erd\H{o}s$-$R\'{e}nyi random graph with $p=0.05$, $p=0.1$, and $p=0.5$, respectively. We compare APM with the primal-dual method \cite{scaman-2019}. We test two different parameter settings for APM. For the first one, we follow Corollary \ref{lemma_cons-noncons} to set $\beta_0=\frac{0.01}{\sqrt{1-\sigma_2(\W)}}$, $T_k=\lceil k(1-\sigma_2(\W))\rceil$, and $\eta_k=\frac{5000}{k^2\sqrt{1-\sigma_2(\W)}}$, and name it APM with adaptive parameters (APM-adp). For the second one, we follow Theorem \ref{the1} to set $\beta_0=\frac{0.01}{\sqrt{1-\sigma_2(\W)}}$, $T_k=\lceil K(1-\sigma_2(\W))\rceil$, and $\eta_k=\frac{5000}{kK\sqrt{1-\sigma_2(\W)}}$ with $K=300$ and name it APM with fix parameters (APM-fix). For the primal-dual method, we set the number of inner iterations as $\lceil K(1-\sigma_2(\W))\rceil$ and tune the best parameters of $\sigma=1$ and $\eta=0.5$ in \cite[Alg 3]{scaman-2019}. Figure \ref{fig0} plots the result. We can see that APM performs better than the primal-dual method, and APM-adp needs less communications and subgradient computations than APM-adp.

\section{Conclusion}\label{sec:conclusion}

In this paper, we study the distributed accelerated gradient methods from the perspective of the accelerated penalty method with increasing penalty parameters. Two algorithms are proposed. The first algorithm achieves the optimal gradient computation complexities and near optimal communication complexities for both strongly convex and nonstrongly convex smooth distributed optimization. Our communication complexities are only worse by a factor of $\log\frac{1}{\epsilon}$ than the lower bounds. Our second algorithm obtains both the optimal subgradient computation and communication complexities for nonsmooth distributed optimization. Our APM-C is not suited to the network with large $\frac{1}{\sqrt{1-\sigma_2(\W)}}$ for strongly convex problems, in which case the communication cost is high.

\bibliographystyle{ieeetr}
\bibliography{APM}

\ifCLASSOPTIONcaptionsoff
  \newpage
\fi

\end{document}